\documentclass{article}
\usepackage[utf8]{inputenc}
\usepackage{tabularx} 
\usepackage{amsmath, amsthm}  
\usepackage{amsmath, amssymb, bm, mathtools, mathdots}
\numberwithin{equation}{section}
\usepackage{floatrow}
\usepackage{bbm}
\usepackage{tikz-cd}
\usepackage{graphicx} 
\usepackage[margin=1in,letterpaper]{geometry} 
\usepackage{cite} 
\usepackage[all]{xy}
\usepackage{tikz}
\usetikzlibrary{arrows.meta}
\usetikzlibrary{commutative-diagrams}
\usepackage[final]{hyperref} 
\hypersetup{
	colorlinks=true,       
	linkcolor=blue,        
	citecolor=blue,        
	filecolor=magenta,     
	urlcolor=blue         
}
\usepackage{xcolor}
\newcommand{\R}{\mathbb{R}} 

\newcommand{\ud}{\,\mathrm{d}}
\newcommand{\udiv}{\, \mathrm{div}}



\newtheorem{theorem}{Theorem}[section]  
\newtheorem{definition}{Definition}

\newtheorem{proposition}{Proposition}[section]
\newtheorem{lemma}{Lemma}[section]
\newtheorem{corollary}{Corollary}[section]
\newtheorem{remark}{Remark}[section]

\newtheorem{claim}{Claim}

\title{Propagation of Chaos for 2D Log Gas on the Whole Space}

\author{Shuzhe Cai \footnote{Beijing International Center for Mathematical Research, Peking University, Beijing 100871, China. 2206392386@pku.edu.cn} 
	\quad
	Xuanrui Feng \footnote{Beijing International Center for Mathematical Research, Peking University, Beijing 100871,  China. pkufengxuanrui@stu.pku.edu.cn} \quad 
	Yun Gong \footnote{Beijing International Center for Mathematical Research, Peking University, Beijing 100871,  China.  1901110013@pku.edu.cn.}
    \quad
    Zhenfu Wang\footnote{Beijing International Center for Mathematical Research, Peking University, Beijing 100871,  China. zwang@bicmr.pku.edu.cn.}}

\begin{document}

\maketitle

\begin{abstract}
We derive the quantitative propagation of chaos in the sense of relative entropy for the first time for the 2D Log gas or the weakly interacting particle systems with 2D Coulomb interactions on the whole space. We resolve this problem by adapting the modulated free energy method in \cite{bresch2023mean} to the whole space setting and  establishing the crucial logarithmic growth estimates for the mean-field Poisson–Nernst–Planck (PNP) equation of single component via the parabolic maximum principle. 
\end{abstract}

\tableofcontents

\section{Introduction}

We consider the following large system of $N$ indistinguishable particles on the two dimensional space $\R^2$, whose evolution is given by the stochastic differential equations (SDEs)
\begin{equation}\label{Eq:particle system}
\begin{aligned}
\ud X_t^i = \frac{1}{N} \sum_{j \neq i} K(X_t^i - X_t^j)\ud t + \sqrt{\frac{2}{\beta}} \ud W_t^i, \ \ \ i = 1,...,N,
\end{aligned}
\end{equation}
where $X_t^i$ denotes the position of the $i$-th particle at time $t$, and $\lbrace W_t^i \rbrace_{1 \leq i \leq N}$ are $N$ independent copies of standard Brownian motions on $\R^2$, and $K:\mathbb{R}^2 \to \mathbb{R}^2$ denotes the 2-body interaction force between any two particles, say $X_i$ and $X_j$. The movement of each particle is governed by a deterministic drift term in the form of pairwise interacting force $K$ and a stochastic diffusion term given by the additive noise. The inverse temperature  $\beta$ is a fixed  positive number that represents the strength of the stochastic term of the particle system. 
In this article, we focus on the mean field limit of the 2D Coulomb or Log gas. So we simply choose $K=-\nabla g$, with $g(x) = -\frac{1}{2 \pi} \log|x|$  the fundamental solution of the Possion equation in $\mathbb{R}^2$, namely $-\Delta g=\delta_0$. Now the interacting kernel $K$ reads as 
\begin{equation}\label{Kdef}
K(x) = -\nabla g(x) =  \frac{1}{2 \pi} \frac{x}{|x|^2},
\end{equation}
where $x = (x_1, x_2) \in \R^2$. The interaction force $K$ is of replusive type and has  singularity  of order $O(1/|x|)$ when $x$ approaches 0.  

We are interested in deriving the mean-field limit  of the particle system \eqref{Eq:particle system} when $N \rightarrow \infty$. By classical results of mean-field limit, typically for systems with regular interaction forces $K\in W^{1, \infty}$ for instance in \cite{dobrushin1979vlasov,mckean1967propagation}, we expect that each particle $X_t^i$ of the particle system \eqref{Eq:particle system} converges to the following McKean-Vlasov process $\bar{X}_t$  as $N \rightarrow \infty$:
\begin{equation}\label{Eq:McKean-Vlasov}
\ud \bar{X}_t = K \ast \bar{\rho}_t(\bar{X}_t) \ud t + \sqrt{\frac{2}{\beta}} \ud W_t, \ \ \ \ \text{ where } \bar{\rho}_t = \text{Law}(\bar{X}_t), 
\end{equation}
where $\bar{X}_t \in \R^2$ and $W_t$ is a standard Brownian motion on $\R^2$ independent of $\lbrace W_t^i \rbrace_{1 \leq i \leq N}$. The process above is self-consistent since  the evolution of $\bar X_t$ depends on the distribution/law $\bar{\rho}_t$ of itself. By applying  It\^o's formula, we deduce that the law $\bar{\rho}_t$ of $\bar{X_t}$ satisfies the nonlinear Fokker-Planck equation or also McKean-Vlasov PDE
\begin{equation}\label{Eq:limiting equation}
\partial_t \bar{\rho} + \text{div}_x \Big(\bar{\rho}\,  K \ast \bar{\rho}  \Big) = \frac{1}{\beta} \Delta_x \bar{\rho}.
\end{equation}
When we take $K$ as the 2D Coulomb force, the McKean-Vlasov PDE \eqref{Eq:limiting equation} is also called the mean-field Poisson–Nernst–Planck (PNP) of single component. 

The convergence from the particle  system \eqref{Eq:particle system} towards the  McKean-Vlasov system \eqref{Eq:McKean-Vlasov}  is usually termed as mean field limit and can be described precisely by {\em Propagation of Chaos} in mathematics literature. In our 2D Log or 2D Coulomb case, quantitative propagation of chaos for systems set on the torus $\mathbb{T}^2$ has been established by Bresch-Jabin-Wang \cite{bresch2019modulated}, while qualitative propagation of chaos (without convergence rate) for 2D Log gas  on the whole space $\mathbb{R}^2$  has been established by Liu-Yang \cite{Liu2016PropagationOC}, following the entropy based compactness argument in \cite{fournier2014propagation}. For more detailed discussions concerning the mean-field theory of various models, we refer to the  reviews such as \cite{sznitman1991topics,golse2016dynamics,jabin2014review,jabin2017mean}. 

\subsection{Modulated Free Energy}

We shall apply the so-called modulated free energy method introduced in \cite{bresch2019modulated,bresch2023mean} to derive the propagation of chaos result for the particle system \eqref{Eq:particle system}. Since our method will work on the statistical sense, we only care about the evolution of the joint distribution  $\rho_N(t, \cdot) $ of the particle system \eqref{Eq:particle system}, that is  the Liouville equation (forward Kolmogorov equation) 
\begin{equation}\label{Eq:Liouville}
\partial_t \rho_N + \sum_{i=1}^N \text{div}_{x_i} \bigg(\rho_N \frac{1}{N} \sum_{j \neq i} K(x_i - x_j)\bigg) = \frac{1}{\beta} \sum_{i=1}^N \Delta_{x_i} \rho_N, 
\end{equation}
where  $\rho_N = \rho_N(t,x_1,...,x_N)$ is the joint distribution of $(X_t^1, X_t^2, \cdots, X_t^N)$.   Since by assumption all the particles in \eqref{Eq:particle system} are indistinguishable, once we  assume that the initial law $\rho_N(0,\cdot)$ is a symmetric probability measure on $\R^{2N}$, i.e. $\rho_N(0,\cdot) \in \mathcal{P}_{sym}(\R^{2N})$, then so is $\rho_N(t,\cdot)$ for any $t > 0$. It is also convenient to define the following tensorized law associated with the mean-field equation \eqref{Eq:limiting equation}:
\begin{equation*}
\bar{\rho}_N(t,x_1,...,x_N) := \bar \rho_t^{\otimes N}(x_1, \cdots, x_N) = \prod_{i=1}^N \bar{\rho}(t,x_i).
\end{equation*}
The main idea of this paper relies on propagating and controlling the modulated free energy between the joint distribution $\rho_N$ or  the empirical measure $\mu_N $ under the law $\rho_N$, i.e. 
\begin{equation*}
    \mu_N(x)=\frac{1}{N}\sum_{i=1}^N \delta_{X_t^i}(x), 
\end{equation*}
and the limiting density $\bar\rho$, which combines the relative entropy introduced by Jabin-Wang \cite{jabin2018quantitative} and the modulated energy introduced by Serfaty \cite{serfaty2020mean} and Duerinckx \cite{Duerinckx2015MeanFieldLF}. Roughly speaking, the relative entropy method can make perfect use of the diffusion term with positive viscosity, as when computing the time derivative of the relative entropy, it produces an extra negative relative Fisher information term. However, one may face some singular term $\udiv K(x-y)$ in the computations, which naturally vanishes in the 2D Navier-Stokes case, since in that case the kernel given by Biot-Savart law $$K=\frac{1}{2\pi} \nabla^\perp \log |x|$$ is divergence-free. On the other hand, the modulated energy method can well treat the Coulomb case in any dimension but without diffusion term, since it is in its nature designed as a Coulomb/Riesz based metric in view of Fourier transform. A pure modulated energy method is also not enough to show strong propagation of chaos. Hence in the series of work by Bresch-Jabin-Wang \cite{bresch2019mean,bresch2019modulated,bresch2023mean}, the above two quantites are combined with appropriate weights, which cancels exactly the singular term $\udiv K(x-y)$ in the computation of time derivative, but still preserves the  coerciveness over the relative entropy. 

Now we introduce the modulated free energy $E_N(\rho_N|\bar{\rho}_N)$ on $\R^d$ with general dimension $d$ as follows, 
\begin{equation}\label{Quantity:modulated free energy-1}
\begin{aligned}
E_N(\rho_N|\bar{\rho}_N) = \mathcal{H}_N(\rho_N|\bar{\rho}_N) + \frac{1}{\beta} \int_{\R^{dN}} F_N(\mu_N, \bar{\rho}) \rho_N \ud X^N,
\end{aligned}
\end{equation}
where $X^N=(x_1,...,x_N)$ and $\mathcal{H}_N(\rho_N|\bar{\rho}_N)$ is exactly the normalized relative entropy between the joint law $\rho_N$ and the tensorized law $\bar\rho_N$ on the whole space:
\begin{equation}\label{Quantity:Relative entropy}
\mathcal{H}_N(\rho_N|\bar{\rho}_N) = \frac{1}{N} \int_{\R^{dN}} \rho_N \log \frac{\rho_N}{\bar{\rho}_N} \ud X^N,
\end{equation}
and $F_N(\mu_N,\bar\rho)$ is the modulated energy between the empirical measure $\mu_N$ and the limit density $\bar\rho$:
\begin{equation}\label{Quantity: modulated energy}
F_N(\mu_N, \bar\rho) = \int_{(\R^{d})^2 \backslash \Delta} g(x - y)(\ud \mu_N - \ud \bar{\rho})^{\otimes 2}(x,y), 
\end{equation}
here $\Delta$ denotes the diagonal in $\R^d \times \R^d$ and we remove this domain in our integral. The quantity $E_N(\rho_N | \bar{\rho}_N)$ can also be understood as the normalized relative entropy with two weights related to the Gibbs equilibrium, i.e.
\begin{equation}\label{Weighted relative entropy}
E_N(\rho_N|\bar{\rho}_N) = \frac{1}{N} \int_{\R^{dN}} \rho_N \log \bigg( \frac{\rho_N(t, X^N)}{G_N(t, X^N)} \frac{G_{\bar{\rho}_N}(t, X^N)}{\bar{\rho}_N(t, X^N
)}\bigg) \ud X^N,
\end{equation}
where
\begin{equation*}
\begin{aligned}
& G_N(t,X^N) = \exp \bigg( - \frac{\beta}{2N} \sum_{i \neq j} g(x_i - x_j) \bigg), \\
& G_{\bar{\rho}}(t,x) = \exp \bigg( - \beta g \ast \bar{\rho}(x) + \frac{\beta}{2} \int_{\R^{dN}} g \ast \bar{\rho} \bar{\rho} \bigg), \\
& G_{\bar{\rho}_N}(t,X^N) = \exp \bigg( - \beta \sum_{i=1}^N g \ast \bar{\rho}(x_i) + \frac{\beta N}{2} \int_{\R^{dN}} g \ast \bar{\rho} \bar{\rho} \bigg).
\end{aligned}
\end{equation*}
This kind of formula has been successfully used to derive the propagation of chaos for repulsive kernels and mild attractive kernels (including the 2D Patlak-Keller-Segel system) in the series of works \cite{bresch2019mean,bresch2019modulated,bresch2023mean}. 

\subsection{Main Results}

Based on the quantities we use above, we first need some assumptions on the solutions of the Liouville equation \eqref{Eq:Liouville} to justify our calculations below. Hence we introduce the notion of entropy solutions.
\begin{definition}[Entropy Solution] Let $T > 0$ be fixed. A density $\rho_N \in L^{\infty}([0,T]; L^1(\R^{dN}))$ with $\rho_N \geq 0$ and $\int_{\R^{dN}} \rho_N \ud X^N = 1$ is an entropy solution to the Liouville equation \eqref{Eq:Liouville} on the time interval $[0,T]$, if it solves \eqref{Eq:Liouville} in the sense of distribution, and for a.e. $t \leq T$,  
\begin{equation}\label{IEq:entropy inequality}
\begin{aligned}
&\int_{\R^{dN}} \rho_N(t, X^N) \log \bigg( \frac{\rho_N(t, X^N)}{G_N}\bigg) \ud X^N+\frac{1}{\beta} \sum_{i=1}^N \int_0^t \int_{\R^{dN}} \rho_N(s, X^N) \left| \nabla_{x_i} \log \bigg( \frac{\rho_N(s, X^N)}{G_N} \bigg) \right|^2 \ud X^N \ud s\\
\leq &\, \int_{\R^{dN}} \rho^0_N \log \bigg( \frac{\rho_N^0}{G_N}\bigg) \ud X^N. 
\end{aligned}
\end{equation}
\end{definition}
The existence of such entropy solution of the Liouville equation \eqref{Eq:Liouville} is given by the following proposition.
\begin{proposition}
If $\rho_N^0$ is a probability density on $\R^{dN}$ such that
\begin{equation*}
\int_{\R^{dN}} \rho_N^0 \log \bigg(\frac{\rho_N^0}{G_N}\bigg) \ud X^N < \infty,
\end{equation*}
then there exists a global entropy solution to \eqref{Eq:Liouville} with initial data $\rho_N^0$.
\end{proposition}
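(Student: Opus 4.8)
The plan is to realize the Liouville equation \eqref{Eq:Liouville} as a Wasserstein gradient flow and to build entropy solutions by regularizing the interaction kernel. Since $K=-\nabla g$ and $g$ is even, the drift is conservative: setting
\[
U_N(X^N):=\frac{1}{2N}\sum_{i\neq j}g(x_i-x_j),\qquad G_N=e^{-\beta U_N},
\]
one has $\tfrac{1}{N}\sum_{j\neq i}K(x_i-x_j)=-\nabla_{x_i}U_N$, so that \eqref{Eq:Liouville} reads
\[
\partial_t\rho_N=\frac{1}{\beta}\sum_{i=1}^N\mathrm{div}_{x_i}\Big(\rho_N\,\nabla_{x_i}\log\frac{\rho_N}{G_N}\Big),
\]
i.e.\ the gradient flow of the free energy $\mathcal F_N(\rho_N)=\int_{\R^{dN}}\rho_N\log(\rho_N/G_N)\,\ud X^N$. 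Formally, multiplying by $\log(\rho_N/G_N)$ and integrating by parts produces exactly \eqref{IEq:entropy inequality} with equality, so the task is to make this rigorous.

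First I would mollify: let $g^\epsilon=g\ast\varphi_\epsilon$ for a standard mollifier $\varphi_\epsilon$. Because $g$ is superharmonic ($-\Delta g=\delta_0\geq0$) we get two structural facts, $g^\epsilon\leq g$ pointwise and $\Delta g^\epsilon=-\varphi_\epsilon\leq0$, while $K^\epsilon:=-\nabla g^\epsilon=K\ast\varphi_\epsilon$ is, for each fixed $\epsilon$, bounded and Lipschitz. Hence the system obtained from \eqref{Eq:particle system} by replacing $K$ with $K^\epsilon$ has a unique global strong solution, and its time-marginal law $\rho_N^\epsilon$ is a classical, strictly positive solution of the regularized Liouville equation with datum $\rho_N^0$ (one may first approximate $\rho_N^0$ by a smooth compactly supported density if extra initial regularity is wanted). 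Testing the equation against $\sum_i|x_i|^2$ and using that $z\cdot K^\epsilon(z)$ is bounded uniformly in $\epsilon$ (indeed $z\cdot K(z)\equiv\tfrac{1}{2\pi}$) yields $\sup_{[0,T]}\int\rho_N^\epsilon\sum_i|x_i|^2\leq C(N,T)$ uniformly in $\epsilon$, assuming as we may that $\int\rho_N^0\sum_i|x_i|^2<\infty$.

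The core is a family of $\epsilon$-uniform a priori bounds from the exact energy--dissipation identity
\[
\int_{\R^{dN}}\rho_N^\epsilon(t)\log\frac{\rho_N^\epsilon(t)}{G_N^\epsilon}\,\ud X^N+\frac{1}{\beta}\sum_{i=1}^N\int_0^t\!\!\int_{\R^{dN}}\rho_N^\epsilon\Big|\nabla_{x_i}\log\frac{\rho_N^\epsilon}{G_N^\epsilon}\Big|^2\ud X^N\ud s=\int_{\R^{dN}}\rho_N^0\log\frac{\rho_N^0}{G_N^\epsilon}\,\ud X^N .
\]
Since $g^\epsilon\leq g$, the right-hand side is bounded above by $\int\rho_N^0\log(\rho_N^0/G_N)<\infty$ and converges to it by monotone convergence; together with $U_N^\epsilon\geq-C\sum_i|x_i|$ — using $-\log|z|\geq-|z|$ — and the moment bound, the entropy term on the left is bounded below uniformly, so both the entropy and the full dissipation $\int_0^T\!\!\int\rho_N^\epsilon|\nabla\log(\rho_N^\epsilon/G_N^\epsilon)|^2$ are controlled uniformly in $\epsilon$. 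Now expanding $|\nabla_{x_i}\log(\rho_N^\epsilon/G_N^\epsilon)|^2=|\nabla_{x_i}\log\rho_N^\epsilon+\beta\nabla_{x_i}U_N^\epsilon|^2$ and integrating the cross term by parts gives $-\beta\int\rho_N^\epsilon\Delta_{x_i}U_N^\epsilon=\tfrac{\beta}{N}\sum_{j\neq i}\int\rho_N^\epsilon\varphi_\epsilon(x_i-x_j)\geq0$; this nonnegative sign — the point where superharmonicity of $g$ is used — yields from the same identity the two further uniform bounds
\[
\int_0^T\!\!\int\rho_N^\epsilon|\nabla\log\rho_N^\epsilon|^2\leq C,\qquad \int_0^T\!\!\int\rho_N^\epsilon\Big|\tfrac{1}{N}\textstyle\sum_{j\neq i}K^\epsilon(x_i-x_j)\Big|^2\leq C .
\]

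Finally I would pass to the limit $\epsilon\to0$. The entropy and moment bounds give equi-integrability (de la Vall\'ee--Poussin) and tightness, hence weak-$L^1$ compactness of $\{\rho_N^\epsilon(t)\}$ uniformly in $t$; the first bound above gives $\{\sqrt{\rho_N^\epsilon}\}$ bounded in $L^2([0,T];H^1(\R^{dN}))$; and the equation gives a time-regularity estimate since the fluxes are bounded in $L^1_{loc}$ by Cauchy--Schwarz and the bounds above. An Aubin--Lions argument then gives, along a subsequence, $\sqrt{\rho_N^\epsilon}\to\sqrt{\rho_N}$ strongly in $L^2_{loc}$ and a.e., $\nabla\sqrt{\rho_N^\epsilon}\rightharpoonup\nabla\sqrt{\rho_N}$ weakly in $L^2$, and — combining the second bound above with $K^\epsilon(x_i-x_j)\to K(x_i-x_j)$ a.e.\ off the diagonal and uniqueness of weak limits — $\sqrt{\rho_N^\epsilon}\,\tfrac1N\sum_{j\neq i}K^\epsilon(x_i-x_j)\rightharpoonup\sqrt{\rho_N}\,\tfrac1N\sum_{j\neq i}K(x_i-x_j)$ weakly in $L^2$. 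Writing the drift term of the weak formulation as the pairing of this weakly convergent factor against the strongly convergent factor $\sqrt{\rho_N^\epsilon}\,\nabla_{x_i}\phi$ lets one pass to the limit there, the remaining terms being linear, so $\rho_N$ solves \eqref{Eq:Liouville} distributionally with datum $\rho_N^0$. Lower semicontinuity of $\rho\mapsto\int\rho\log\rho$ under weak-$L^1$ convergence, of the dissipation functional under the above convergences, Fatou's lemma for the uniformly lower-bounded potential term, and convergence of the right-hand side then turn the energy--dissipation identity into the inequality \eqref{IEq:entropy inequality}; since $T$ is arbitrary the solution is global. The step I expect to be genuinely delicate is this last passage to the limit in the singular drift: $L\log L$ integrability of $\rho_N^\epsilon$ alone cannot absorb the $|x_i-x_j|^{-1}$ singularity in dimension $dN$, and what rescues the argument is precisely the good-sign cross term above, which makes the single energy--dissipation identity control both $\int\rho_N^\epsilon|\nabla\log\rho_N^\epsilon|^2$ and the $L^2(\rho_N^\epsilon)$-norm of the drift at once.
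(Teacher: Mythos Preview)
Your proposal is correct and follows essentially the same strategy as the paper: regularize the kernel, obtain classical solutions satisfying an exact energy--dissipation identity with $\epsilon$-uniform bounds (the superharmonicity of $g$ giving the good sign on the cross term is exactly the mechanism used in \cite{Liu2016PropagationOC}), extract compactness, and pass to the limit to recover both the distributional formulation and the entropy inequality \eqref{IEq:entropy inequality} via lower semicontinuity. The paper's own proof is a two-sentence sketch that defers all details to \cite[Theorem~2.1]{Liu2016PropagationOC}, so your outline is in fact considerably more detailed than what appears there; the one extra hypothesis you quietly assume --- finite second moment of $\rho_N^0$ --- is not in the proposition but is a routine approximation step.
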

\begin{proof}
The existence of weak solutions relies on solving the regularization of the Liouville equation \eqref{Eq:Liouville}, denoting the corresponding classical solutions by $\{\rho_N^{\varepsilon}(t,X^N)\}_{\varepsilon>0}$ and showing that this sequence of densities are relatively compact. We refer to \cite[Theorem 2.1]{Liu2016PropagationOC} for more details. Then we can directly check that regularized sequence $\{\rho_N^{\varepsilon}(t,X^N)\}_{\varepsilon>0}$ satisfies the entropy dissipation inequality \eqref{IEq:entropy inequality} in the definition of entropy solution, and we obtain the desired result by taking limit of $\{\rho_N^{\varepsilon}(t,X^N)\}_{\varepsilon>0}$ as $\varepsilon \rightarrow 0$. 
\end{proof}

Now we are able to state our main result. Briefly speaking, our main contribution is to extend this modulated free energy method, which has been used in \cite{bresch2019modulated} for the 2D Coulomb gas on the torus case, to the 2D whole space by obtaining the logarithimic growth estimates for the solution of the limit equation \eqref{Eq:limiting equation}, which leads to the first quantitative convergence from the $N$-particle system \eqref{Eq:particle system} towards the limit McKean-Vlasov system \eqref{Eq:McKean-Vlasov} on $\R^2$ with any $\beta > 0$. This result also improves the qualitative convergence from the particle system \eqref{Eq:particle system} towards the limit system \eqref{Eq:McKean-Vlasov} on $\R^2$ by the compactness argument in \cite{Liu2016PropagationOC} to a quantitative counterpart. We finally remark that this method temporarily cannot cover the particle system \eqref{Eq:particle system} in $\mathbb{R}^d$  with dimension $d \geq 3$, where the control of time evolution of the modulated free energy is much more complicated than the case of $d = 2$. We will leave propagation of chaos for 3D or more higher dimensional Coulomb gas on the whole space to our further research.

\begin{theorem}[Entropic Propagation of Chaos]\label{Thm:EPoC}
Assume that $\rho_N$ is an entropy solution to the Liouville equation \eqref{Eq:Liouville} and that $\bar{\rho} \in L^{\infty}([0,T], L^1 \cap L^{\infty}(\R^2))$ solves the limit equation \eqref{Eq:limiting equation} with $\bar{\rho} \geq 0$ and $\int_{\R^2} \bar{\rho}(x,t)dx = 1$. Assume that the initial data $\bar{\rho}_0 \in W^{2,1} \cap W^{2,\infty}(\R^2)$ satisfies the logarithmic growth conditions
\begin{equation}\label{Ass:nabla-log}
\begin{aligned}
|\nabla \log \bar{\rho}_0(x)| &\lesssim 1+|x|, \\
|\nabla^2 \log \bar{\rho}_0(x)| &\lesssim 1+|x|^2,
\end{aligned}
\end{equation}
and the Gaussian upper bound
\begin{equation}\label{Ass:Gaussian-upper-bound}
\bar{\rho}_0(x) \leq C_0 \exp(-C_0^{-1}|x|^2),
\end{equation}
for some constant $C_0>0$. Then we have
\begin{equation*}
\mathcal{H}_N(\rho_N|\bar{\rho}_N)(t) \leq C e^{\frac{C}{\varepsilon}t^{\varepsilon}}\Big(E_N(\rho_N|\bar{\rho}_N)(0) + \frac{1}{N}\Big), 
\end{equation*}
where $C = C(\beta, \bar{\rho}_0, C_0) > 0$ is some positive constant and $\varepsilon > 0$ is any small positive constant.
\end{theorem}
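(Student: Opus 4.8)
\medskip
\noindent\textbf{Proof plan.}\quad The plan is to run the modulated free energy scheme of Bresch--Jabin--Wang \cite{bresch2019modulated,bresch2023mean} directly on $\R^2$; the genuinely new inputs are the propagation in time of the logarithmic growth bounds \eqref{Ass:nabla-log} and of the Gaussian bound \eqref{Ass:Gaussian-upper-bound}, which on the whole space play the role of the uniform bounds available on the torus. I would first establish, as a separate lemma (this is the parabolic maximum principle estimate advertised in the abstract, applied to the equations satisfied by $\log\bar\rho$, $\nabla\log\bar\rho$ and $\nabla^2\log\bar\rho$ that follow from \eqref{Eq:limiting equation}), that for all $t\in[0,T]$
\[
|\nabla\log\bar\rho(t,x)|\lesssim 1+|x|,\qquad |\nabla^2\log\bar\rho(t,x)|\lesssim 1+|x|^2,\qquad \bar\rho(t,x)\le C(t)\exp\!\bigl(-C(t)^{-1}|x|^2\bigr),
\]
with $t$-dependent constants; here one also uses that $\|\bar\rho(t)\|_{L^\infty}$ is non-increasing and that $\|K\ast\bar\rho(t)\|_{L^\infty}\lesssim\|\bar\rho(t)\|_{L^1}+\|\bar\rho(t)\|_{L^\infty}$ to close the comparison/barrier arguments.

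Next I would differentiate $E_N(\rho_N|\bar\rho_N)$ along the coupled dynamics, using the entropy inequality \eqref{IEq:entropy inequality} for $\rho_N$ (which yields the negative relative Fisher information) and \eqref{Eq:limiting equation} for $\bar\rho$. As in \cite{bresch2023mean}, the weight $\tfrac1\beta$ in front of $\int F_N(\mu_N,\bar\rho)\rho_N\,\mathrm dX^N$ is chosen precisely so that the singular term of the form $\int_{(\R^2)^2\setminus\Delta}\mathrm{div}_xK(x-y)\,\mathrm d\mu_N^{\otimes 2}$, produced by the interplay of drift and diffusion on the pair potential $\sum_{i\ne j}g(x_i-x_j)$ hidden in $\log G_N$, is exactly cancelled by the It\^o correction from differentiating $F_N$ along \eqref{Eq:particle system}; this is what makes the scheme work for the repulsive singular kernel \eqref{Kdef} with noise. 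After this cancellation, and after splitting the cross term $\tfrac1N\sum_i\int\rho_N\bigl((K\ast\mu_N)(x_i)-(K\ast\bar\rho)(x_i)\bigr)\cdot\nabla_{x_i}\log\tfrac{\rho_N}{\bar\rho_N}$ (diagonal removed) by Cauchy--Schwarz to absorb half the Fisher information, the estimate reads schematically
\begin{equation*}
\frac{\mathrm d}{\mathrm dt}E_N(\rho_N|\bar\rho_N)\le -\frac{1}{2\beta N}\sum_{i=1}^N\int\rho_N\Bigl|\nabla_{x_i}\log\tfrac{\rho_N}{\bar\rho_N}\Bigr|^2\mathrm dX^N+\frac12\int R_{\bar\rho}(x,y)\,\mathrm d(\mu_N-\bar\rho)^{\otimes 2}(x,y),
\end{equation*}
where the kernel $R_{\bar\rho}$ is built from $K$ tested against first and second derivatives of $\log\bar\rho$ and of $g\ast\bar\rho$ (equivalently of the effective potential $h=\log\bar\rho+\beta\,g\ast\bar\rho$). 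The point, already used on the torus, is that $R_{\bar\rho}$ is \emph{bounded near the diagonal}: the $|x-y|^{-1}$ singularity of $K(x-y)$ is compensated by $|\nabla\log\bar\rho(x)-\nabla\log\bar\rho(y)|\lesssim|x-y|\,\sup_{[x,y]}|\nabla^2\log\bar\rho|$, so off the diagonal $R_{\bar\rho}$ is controlled by $\nabla\log\bar\rho$ and the decay of $g$, and near it by $\nabla^2\log\bar\rho$.

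The heart of the argument is to bound the error term by $C(t)\,E_N(\rho_N|\bar\rho_N)+O(1/N)$ (up to the modulated-energy lower bound). For the regular, polynomially growing part of $R_{\bar\rho}$ I would apply the Jabin--Wang large deviation / law-of-large-numbers estimate \cite{jabin2018quantitative}: after subtracting the $\bar\rho$-mean in each variable, the Gibbs variational principle controls $\tfrac1{N^2}\sum_{i\ne j}\int\rho_N R_{\bar\rho}(x_i,x_j)$ by $\tfrac1N\mathcal H_N(\rho_N|\bar\rho_N)+\tfrac{C}{\gamma N}$ as soon as $\sup_x\int e^{\gamma|R_{\bar\rho}(x,y)|}\bar\rho(t,y)\,\mathrm dy<\infty$; the genuinely singular diagonal part of $R_{\bar\rho}$, which is not exponentially integrable, is instead absorbed into $\int F_N\rho_N$ through the Serfaty-type stability estimates for the modulated energy \cite{serfaty2020mean}, together with the lower bound $F_N(\mu_N,\bar\rho)\ge -C\bigl(1+\|\bar\rho(t)\|_{L^\infty}\bigr)\tfrac{\log N}{N}$, valid on $\R^2$ once the Gaussian bound on $\bar\rho$ is in force. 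This is exactly where the whole-space setting bites and where I expect the main obstacle to be: on $\mathbb{T}^2$ one has $R_{\bar\rho}\in L^\infty$ outright, whereas here \eqref{Ass:nabla-log} only gives $|R_{\bar\rho}(x,y)|\lesssim 1+|x|^2+|y|^2$ after propagation, so the exponential moment is finite only for $\gamma$ small relative to the Gaussian constant $C(t)$; one must quantitatively balance the polynomial growth of $\nabla^2\log\bar\rho$ against the Gaussian tail of $\bar\rho$ so that all constants stay finite with controlled $t$-dependence.

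Finally, assembling the above produces a differential inequality $\tfrac{\mathrm d}{\mathrm dt}\bigl(E_N+\tfrac{C\log N}{N}\bigr)\le A(t)\bigl(E_N+\tfrac{C\log N}{N}\bigr)+\tfrac{C}{N}$, where $A(t)$ aggregates the relevant norms of $\nabla\log\bar\rho$, $\nabla^2\log\bar\rho$, $g\ast\bar\rho$ and the Gaussian constant at time $t$. Interpolating the uniform-in-time bound near $t=0$ (available since $\bar\rho_0\in W^{2,1}\cap W^{2,\infty}$) against the diffusive decay of these quantities for large $t$ gives $A(t)\lesssim t^{\varepsilon-1}$ for any small $\varepsilon>0$, hence $\int_0^t A(s)\,\mathrm ds\lesssim\tfrac1\varepsilon t^\varepsilon$; Grönwall's lemma then yields $E_N(\rho_N|\bar\rho_N)(t)\lesssim e^{\frac C\varepsilon t^\varepsilon}\bigl(E_N(\rho_N|\bar\rho_N)(0)+\tfrac1N\bigr)$ (folding the $\log N$ from the modulated-energy lower bound and the $t$-prefactor into the constant and the exponential), and converting $E_N(t)$ back to $\mathcal H_N(\rho_N|\bar\rho_N)(t)$ by the same modulated-energy lower bound at the final time gives the claim.
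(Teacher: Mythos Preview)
Your overall strategy matches the paper's: modulated free energy evolution, logarithmic growth estimates on $\bar\rho$, Jabin--Wang large deviation on the commutator, Grönwall. Two points deserve correction, and one deserves sharpening.

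First, there is no ``genuinely singular diagonal part'' to be absorbed via Serfaty stability. In the 2D Coulomb case the full commutator kernel $(w(x)-w(y))\cdot K(x-y)$, with $w=\nabla\log\bar\rho+\beta\,\nabla g\ast\bar\rho$, is bounded near the diagonal by exactly the mean-value argument you describe, and away from the diagonal by $|w|/|x-y|$; after the two-sided recentering the resulting test function $\phi(x,y)$ satisfies $\sup_y|\phi(x,y)|\lesssim\frac{1}{1+t}\bigl(1+\log(1+t)+\frac{|x|^2}{1+t}\bigr)$. The paper handles the entire error term by the Jabin--Wang theorem alone (condition of the form $\sup_{p\ge1}p^{-1}\|\sup_y|\phi(\cdot,y)|\|_{L^p(\bar\rho)}<\infty$), using only the Gaussian upper bound on $\bar\rho$ to guarantee exponential integrability of the quadratic growth in $x$. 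No Serfaty-type commutator estimate is invoked; the modulated-energy lower bound $F_N\ge-C\log N/N$ enters only at the very end, to replace $\mathcal H_N$ by $E_N$ inside the Grönwall loop.

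Second, in the Bresch--Jabin--Wang evolution there is no Cauchy--Schwarz split of a cross term against the Fisher information. The negative term that appears is already the \emph{modulated} Fisher information $\tfrac1{\beta N}\int\rho_N\bigl|\nabla\log\tfrac{\rho_N}{\bar\rho_N}-\nabla\log\tfrac{G_N}{G_{\bar\rho_N}}\bigr|^2$, and the only remaining error is the commutator integral; one simply drops the Fisher term.

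Finally, the input that yields $A(t)\lesssim(1+t)^{\varepsilon-1}$ is not merely ``$t$-dependent constants'' in the log estimates but their precise diffusive decay: the paper proves
\[
|\nabla\log\bar\rho|^2+|\nabla^2\log\bar\rho|\lesssim\frac{1}{1+t}\Bigl(1+\log(1+t)+\frac{|x|^2}{1+t}\Bigr)
\]
via Bernstein/Hamilton-type auxiliary functions (e.g.\ $t\,|\nabla\bar\rho|^2/\bar\rho+\bar\rho\log\bar\rho$) and a Grigor'yan-type parabolic maximum principle on the whole space, for which a sharp Gaussian \emph{lower} bound $\bar\rho\ge C(1+t)^{-C}\exp\bigl(-C|x|^2/(1+t)\bigr)$ must be established first. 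This is the bulk of the new analysis on $\R^2$ and is what produces the $e^{Ct^\varepsilon/\varepsilon}$ rather than $e^{Ct}$ growth; it should be framed as the main lemma, not a preparatory step.
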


\begin{remark}
We remark that for the 2D viscous vortex model, it shares the same result with the form 
\begin{equation*}
\mathcal{H}_N(\rho_N|\bar{\rho}_N)(t) \leq C e^{\frac{C}{\varepsilon}t^{\varepsilon}}\Big(\mathcal{H}_N(\rho_N|\bar{\rho}_N)(0) + \frac{1}{N}\Big)
\end{equation*}
on the whole space, which refines the time growth estimate compared to the previous work Feng-Wang \cite{feng2023quantitative}.
We can obtain this result by our new time decay estimates of $|\nabla \log \bar{\rho}|$ and $|\nabla^2 \log \bar{\rho}|$ in Section \ref{Hamilton type gradient estimates}.  
\end{remark}

\begin{remark}
We expect that the best time growth estimates in Theorem \ref{Thm:EPoC} should be polynomial in $t$, i.e.
\begin{equation*}
\mathcal{H}_N(\rho_N|\bar{\rho}_N)(t) \leq C (1+t)^c\Big(E_N(\rho_N|\bar{\rho}_N)(0) + \frac{1}{N}\Big), 
\end{equation*}
where the constant $c = c(\beta, \bar{\rho}_0, C_0) > 0$. This shares the same rate in $t$ as the sub-Coulombian cases when $d \geq 3$, which have been proved in \cite[Theorem 1.2]{rosenzweig2023global}. The reason why we cannot reach this optimal time growth esitmate is due to the suboptimal time decay estimates of $|\nabla \log \bar{\rho}|$ and $|\nabla^2 \log \bar{\rho}|$ by the parabolic maximum principle, see Theorem \ref{Thm:Logarithm gradient estimate}, Theorem \ref{Thm:Logarithm Hessian estimate} and Remark \ref{Rmk: optimal log estimates} in Section \ref{Hamilton type gradient estimates} for more details.  
\end{remark}

 In order to study uniform-in-time propagation of chaos, one need first establish the concentration of the equilibrium measure  (the Gibbs measure) of $N$-particle system  \eqref{Eq:particle system} around the equlibrium of the mean field equation \eqref{Eq:McKean-Vlasov}. However, for the Coulomb or Riesz gas in any dimension $\mathbb{R}^d$, this problem is relatively easy if one uses relative entropy carefully.    For the cases of Coulombian and Riesz interaction potentials,  the  equilibria for particle systems \eqref{Eq:particle system}  are given by the Gibbs measures, i.e. for all $d \geq 2$, 
\begin{equation}\label{Quantity: equilibrim of PS}
\rho_{N, \infty} = \frac{1}{Z_N} e^{-\beta \sum_{i \neq j} g(x_i - x_j)}, 
\end{equation}
where $Z_N$ is the partition function associated with $\rho_{N,\infty}$, i.e.
\begin{equation*}
Z_N = \int_{\R^{dN}} e^{-\beta \sum_{i \neq j} g(x_i - x_j)} \ud X_N,   
\end{equation*}
and $g(x): \R^d \rightarrow \R$ is defined as
\begin{equation*}
g(x) = \left\{ 
\begin{aligned}
& - \log|x|, \ \ & s = 0, \\
& \frac{1}{|x|^s},\ \ & 0 < s < d,
\end{aligned}
\right.
\end{equation*}
here we set $g(0) = 0$ for convenience. We obtain  the system \eqref{Eq:particle system} when we take $d = 2$ and $s = 0$. Corresponding to the equilibrium \eqref{Quantity: equilibrim of PS}, we also have the following formal equilibrium for limiting equation \eqref{Eq:limiting equation} for all $d \geq 2$ and $0 \leq s < d$,
\begin{equation}\label{Quantity: limiting equilibrium}
\bar{\rho}_{\infty} = \frac{1}{Z} e^{- \beta g \ast \bar{\rho}_{\infty}}, \ \ \ \ \ Z = \int_{\R^d} e^{- \beta g \ast \bar{\rho}_{\infty}} \ud x.
\end{equation}
Thanks to the asymptotic lower bound of moudulated energy $F_N(\mu, \bar{\rho})$ defined by \eqref{Quantity: modulated energy}, which has been proved by Serfaty in Corollary 3.5 in \cite{serfaty2020mean}, we can show that the equilibrium of particle system \eqref{Quantity: equilibrim of PS} is close to the equilibrium of the limiting equation \eqref{Quantity: limiting equilibrium} in the sense of relative entropy. This nice observation provides us with possibility to prove uniform-in-time propagation of chaos for particle systems with Coulombian and Risze interaction potentials. We summarize this observation as the following theorem.
\begin{theorem}\label{CRCon}(Concentration in relative entropy around the limiting equilibrium measure). Assume $d \geq 2$, $0 \leq s < d$ and $\|\bar{\rho}_0 \|_{L^{\infty}} < \infty$. The equilibrium of particle system and limiting equation are defined by \eqref{Quantity: equilibrim of PS} and \eqref{Quantity: limiting equilibrium} respectively. Then we have
\begin{equation*}
\mathcal{H}_N(\rho_{N,\infty} | \bar{\rho}_{N,\infty}) \leq \frac{\beta}{4d} \frac{\log N}{N} \textbf{1}_{s = 0} + \frac{C}{N^{1 - \frac{s}{d}}} \textbf{1}_{s \geq 0}.
\end{equation*}
where $\bar{\rho}_{N, \infty} = \bar{\rho}^{\otimes N}_{\infty}$ and $C = C(s, d, \beta,  \|\bar{\rho}_{\infty}\|_{L^{\infty}}, \|\bar{\rho}_0 \|_{L^{\infty}}) > 0$.
\end{theorem}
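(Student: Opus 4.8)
The plan is to use the explicit Gibbs form \eqref{Quantity: equilibrim of PS} of the particle equilibrium $\rho_{N,\infty}$ together with the fixed-point relation \eqref{Quantity: limiting equilibrium} for $\bar\rho_\infty$ to reduce the normalized relative entropy \eqref{Quantity:Relative entropy} to an explicit constant plus a multiple of the average of the modulated energy $F_N$ from \eqref{Quantity: modulated energy}, and then to close the estimate with Serfaty's sharp lower bound on $F_N$, \cite[Corollary 3.5]{serfaty2020mean}.

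First I would record the elementary algebraic identity obtained by expanding the tensor square $(\ud\mu_N-\ud\bar\rho_\infty)^{\otimes 2}$ in \eqref{Quantity: modulated energy}: writing $h:=g\ast\bar\rho_\infty$ and $\kappa:=\iint g(x-y)\,\bar\rho_\infty(x)\bar\rho_\infty(y)\,\ud x\,\ud y$, one has $\tfrac{1}{N^{2}}\sum_{i\ne j}g(x_i-x_j)=F_N(\mu_N,\bar\rho_\infty)+\tfrac{2}{N}\sum_{i=1}^{N}h(x_i)-\kappa$. Substituting this into the Gibbs density \eqref{Quantity: equilibrim of PS} and using $\bar\rho_\infty=Z^{-1}e^{-\beta h}$ from \eqref{Quantity: limiting equilibrium}, the single-particle potentials $\sum_i h(x_i)$ cancel, and one obtains the pointwise identity $\tfrac{\rho_{N,\infty}}{\bar\rho_{N,\infty}}=\mathcal{K}_N\exp\big(-\tfrac{\beta N}{2}F_N(\mu_N,\bar\rho_\infty)\big)$, where $\mathcal{K}_N>0$ is a constant built from $Z$, $Z_N$ and $\kappa$; integrating against the probability measure $\bar\rho_{N,\infty}$ and using $\int\rho_{N,\infty}=1$ identifies $\mathcal{K}_N=\big(\int e^{-\frac{\beta N}{2}F_N}\,\ud\bar\rho_{N,\infty}\big)^{-1}$. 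Inserting the identity into \eqref{Quantity:Relative entropy} gives $\mathcal{H}_N(\rho_{N,\infty}|\bar\rho_{N,\infty})=\tfrac1N\log\mathcal{K}_N-\tfrac{\beta}{2}\int_{\R^{dN}}F_N(\mu_N,\bar\rho_\infty)\,\ud\rho_{N,\infty}$.

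I would then bound the two terms separately. For the constant, Jensen's inequality applied to $t\mapsto e^{-t}$ against $\bar\rho_{N,\infty}$ gives $\int e^{-\frac{\beta N}{2}F_N}\,\ud\bar\rho_{N,\infty}\ge\exp\big(-\tfrac{\beta N}{2}\int F_N\,\ud\bar\rho_{N,\infty}\big)$, hence $\tfrac1N\log\mathcal{K}_N\le\tfrac{\beta}{2}\int F_N\,\ud\bar\rho_{N,\infty}$; and a direct moment computation with $x_1,\dots,x_N$ independent, each of law $\bar\rho_\infty$ (again using the identity of the first step), gives $\int F_N(\mu_N,\bar\rho_\infty)\,\ud\bar\rho_{N,\infty}=-\kappa/N$, which is $O(1/N)$ since $\kappa$ is finite as soon as $\bar\rho_\infty\in L^{1}\cap L^{\infty}$. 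For the remaining term I would invoke the configuration-wise lower bound of Serfaty, $F_N(\mu_N,\bar\rho_\infty)\ge-\tfrac{\log N}{2dN}\,\textbf{1}_{s=0}-\tfrac{C}{N^{1-s/d}}$ with $C=C(s,d,\|\bar\rho_\infty\|_{L^\infty})$, so that $-\tfrac{\beta}{2}\int F_N\,\ud\rho_{N,\infty}\le\tfrac{\beta}{4d}\tfrac{\log N}{N}\,\textbf{1}_{s=0}+\tfrac{C}{N^{1-s/d}}$. Adding the two contributions and absorbing the $O(1/N)$ constant term into $C/N^{1-s/d}$ (note $N^{1-s/d}=N$ when $s=0$) produces exactly $\mathcal{H}_N(\rho_{N,\infty}|\bar\rho_{N,\infty})\le\tfrac{\beta}{4d}\tfrac{\log N}{N}\,\textbf{1}_{s=0}+\tfrac{C}{N^{1-s/d}}\,\textbf{1}_{s\ge0}$, with $C$ depending on $s,d,\beta,\|\bar\rho_\infty\|_{L^\infty}$ and $\|\bar\rho_0\|_{L^\infty}$.

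The only substantial analytic ingredient is Serfaty's modulated-energy lower bound, which I would import rather than reprove; everything else is the algebraic rewriting and an elementary moment computation. Accordingly the main points requiring care are the bookkeeping of the normalization constants $Z$, $Z_N$ and $\kappa$, so that the prefactor $\tfrac{\beta}{4d}$ in front of $\tfrac{\log N}{N}$ matches exactly the constant appearing in Serfaty's bound, and the finiteness of $\kappa$ (equivalently of the partition functions), which is where the hypothesis $\|\bar\rho_0\|_{L^\infty}<\infty$, propagated to $\|\bar\rho_\infty\|_{L^\infty}<\infty$, together with the local integrability of $g$ for $0\le s<d$, comes in.
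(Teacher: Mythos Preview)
Your proof is correct and follows essentially the same strategy as the paper: both reduce $\mathcal{H}_N(\rho_{N,\infty}|\bar\rho_{N,\infty})$ to the expression $-\tfrac{\beta}{2}\int F_N(\mu_N,\bar\rho_\infty)\,\ud\rho_{N,\infty}-\tfrac{\beta\kappa}{2N}$ and then invoke Serfaty's lower bound on $F_N$. The only cosmetic difference is in how the normalization constant $\mathcal{K}_N$ is handled: the paper adds the reverse relative entropy $\mathcal{H}_N(\bar\rho_{N,\infty}|\rho_{N,\infty})$ so that $\log\mathcal{K}_N$ cancels identically, and then drops the reverse term by nonnegativity; you instead bound $\tfrac{1}{N}\log\mathcal{K}_N$ directly by Jensen. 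These two steps are in fact equivalent, since nonnegativity of $\mathcal{H}_N(\bar\rho_{N,\infty}|\rho_{N,\infty})$ is precisely the Jensen inequality $\int e^{-\frac{\beta N}{2}F_N}\,\ud\bar\rho_{N,\infty}\ge\exp\big(-\tfrac{\beta N}{2}\int F_N\,\ud\bar\rho_{N,\infty}\big)$ you use.
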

\begin{proof}
According to the definition of relative entropy \eqref{Quantity:Relative entropy}, we can compute
\begin{equation*}
\begin{aligned}
& \mathcal{H}_N(\rho_{N,\infty} | \bar{\rho}_{N,\infty}) + \mathcal{H}_N( \bar{\rho}_{N,\infty}|\rho_{N,\infty}) \\ = & - \frac{\beta}{2N^2} \int_{\R^{dN}} \rho_{N,\infty} \sum_{i \neq j} g(x_i - x_j) \ud X^N + \frac{\beta}{N} \int_{\R^{dN}} \rho_{N,\infty} \sum_{i=1}^N g \ast \rho(x_i) \ud X^N  \\ & + \frac{\beta}{2N^2} \int_{\R^{dN}} \bar{\rho}_{N,\infty} \sum_{i \neq j}^N g(x_i - x_j) \ud X^N  - \frac{\beta}{N} \int_{\R^{dN}} \bar{\rho}_{N, \infty} \sum_{i=1}^N g \ast \rho(x_i) \ud X^N \\ = & - \frac{\beta}{2} \int_{\R^{dN}} \rho_{N, \infty} \int_{\R^{2d}} g \ast \ud \mu_N(x) \ud \mu_N(x) \ud X^N + \frac{\beta}{N} \int_{\R^{dN}} \rho_{N,\infty} \sum_{i=1}^N g \ast \bar{\rho}_{\infty}(x_i) \ud X^N \\ & - \frac{\beta}{2} \int_{\R^{dN}} \rho_{N,\infty} \int_{\R^{2d}} g \ast \ud \bar{\rho}_{\infty}(x) \ud \bar{\rho}_{\infty}(x) \ud X^N + \frac{\beta}{2N} \Big(g(0) - \int_{\R^{2d}} g \ast \ud \bar{\rho}_{\infty}(x) \ud \bar{\rho}_{\infty}(x) \Big) \\ = & -\frac{\beta}{2} \int_{\R^{dN}} \rho_{N,\infty} \int_{\R^{2d}} g(x - y)(\ud \mu_N - \ud  \bar{\rho}_{\infty})^{\otimes 2}(x,y) \ud X^N - \frac{\beta}{2N} \int_{\R^{2d}} g \ast \ud \bar{\rho}_{\infty}(x) \ud \bar{\rho}_{\infty}(x).    \end{aligned}
\end{equation*}
Recall the asymptotic lower bound of modulated energy in Corollary 3.5 in \cite{serfaty2020mean},
\begin{equation*}
F_N(\mu_N, \bar{\rho}_{\infty}) \geq -\Big( \frac{1}{2d} \frac{\log(N \|\bar{\rho}_{\infty}\|_{L^{\infty}})}{N} \Big) \textbf{1}_{s=0} - C \|\bar{\rho}_{\infty}\|_{L^{\infty}}^{\frac{s}{d}} N^{-1 + \frac{s}{d}}\textbf{1}_{s \geq 0},
\end{equation*}
we immediately have 
\begin{equation*}
\begin{aligned}
& \mathcal{H}_N(\rho_{N,\infty} | \bar{\rho}_{N,\infty}) + \mathcal{H}_N( \bar{\rho}_{N,\infty}|\rho_{N,\infty}) \\ \leq & \frac{\beta}{4d} \frac{\log N}{N} \textbf{1}_{s = 0} + \frac{C}{N^{1 - \frac{s}{d}}} \textbf{1}_{s \geq 0} + \frac{C}{N}
\\ \leq & \frac{\beta}{4d} \frac{\log N}{N} \textbf{1}_{s = 0} + \frac{C}{N^{1 - \frac{s}{d}}} \textbf{1}_{s \geq 0},
\end{aligned}
\end{equation*}
where $C = C(s, d, \beta,  \|\bar{\rho}_{\infty}\|_{L^{\infty}}, \|\bar{\rho}_0 \|_{L^{\infty}})$. Here we use the nonincreasing of $L^{\infty}$ norm of limiting equation when we take replusive interacting force $K = -\nabla g$, i.e.
\begin{equation*}
\| \bar{\rho}(t, x) \|_{L^{\infty}} \leq \| \bar{\rho}_0 \|_{L^{\infty}} \ \ \ \text{for all} \ t \geq 0,
\end{equation*}
see detailed explaination in Section \ref{wellposedness} for $s = d - 2$ and Remark 3.9 in \cite{de2023sharp} for general $0 \leq s < d$. By nonnegativity of the relative entropy, we finish the proof.
\end{proof}

\begin{remark}\label{rekConvex}
 Similar arguments can be used to establish the same  concentration estimates of the Gibbs measure around the equilibirum of the mean-field equation in the sense of relative entropy under the assumptions that either  the potential $g$ is convex or $\beta \|g\|_{L^\infty}$ is small enough, given that $K = - \nabla g$. 
\end{remark}

\subsection{Related Literature}

Propagation of chaos and mean-field limit for the first order system given in the canonical form \eqref{Eq:particle system} have been rigorously derived for many models with different types of interacting kernels. The first attempt to establish such kind of propagation of chaos for the simplified physical model dates back to Kac \cite{kac1956foundations}, where partial results are obtained concerning the 1D spatially homogeneous Boltzmann equation. Later in the regime of classical coupling method, Dobrushin \cite{dobrushin1979vlasov} and Sznitman \cite{sznitman1991topics} give the quantitative convergence rate for Lipschitz kernel $K$ in the sense of Wasserstein-2 distance. Based on the synchronous coupling approach combined with the convexity of potentials, uniform-in-time propagation of chaos results are given by Malrieu \cite{Malrieu2001LogarithmicSI, Malrieu2003ConvergenceTE} using the logarithmic Sobolev inequality on the limit density. 

Recently, much progress has been extensively obtained in the mean-field limit for system \eqref{Eq:particle system} with singular interacting kernels. The \textit{relative entropy method} initiated in Jabin-Wang \cite{jabin2016mean,jabin2018quantitative} is an effective tool to obtain the strong propagation of chaos, by analyzing the evolution of relative entropy between the joint law of the particle system $\rho_N$ and the tensorized law of the limit system $\bar{\rho}_N$, controlling the time derivative with the relative entropy itself together with some $O(1/N)$ error term, bounded by the new Law of Large Numbers and Large Deviation theorems. We refer to \cite{jabin2018quantitative} for the 2D viscos vortex model on the torus case. Later extended results include Wynter \cite{wynter2021quantitative} for the so-called mixed sign model, Guillin-Le Bris-Monmarch\'e \cite{guillin2024uniform} for the uniform-in-time propagation of chaos using the logarithmic Sobolev inequality (LSI) on the limit density for making use of the negative relative Fisher quantity, Shao-Zhao \cite{shao2024quantitative} for the general circulation case with environmental noise, all of which are still working on the torus case. The recent preprint Feng-Wang \cite{feng2023quantitative} starts the program of extending the previous results to the whole Euclidean space $\R^2$, by deriving the logarithmic growth estimates for the limit density, say controlling terms like $\nabla \log \bar\rho$ and $\nabla^2 \log \bar\rho$ on the whole space. 

The \textit{modulated energy method} introduced by Serfaty has been viewed as another effective tool for resolving propagation of chaos results for singular kernel case, especially the Coulomb/Riesz case. By constructing a Coulomb/Riesz based metric between the empirical measure of the $N$-particle system and the limit density, Serfaty \cite{serfaty2020mean} shows the quantitative propagation of chaos for the first-order Coulomb and super-Coulomb cases without noises in the whole space, in the sense of controlling the weak convergence rate when acting on test functions. This method has been extended to various problems, including Rosenzweig \cite{rosenzweig2022mean} for 2D incompressible Euler equation, Nguyen-Rosenzweig-Serfaty \cite{nguyen2022mean} for Riesz-type flows, Rosenzweig-Serfaty \cite{rosenzweig2023global} for uniform-in-time estimates for sub-Coulomb cases, De Courcel-Rosenzweig-Serfaty \cite{de2023sharp} for periodic Riesz cases. The key estimates appearing in the control of modulated energy, say the commutator type estimates, have been thoroughly examined in the recent work Rosezweig-Serfaty \cite{rosenzweig2024sharp}. We refer to the recent lecture notes by Serfaty \cite{serfaty2024lectures} for more complete discussions on the above topics.

Our \textit{modulated free energy method}, which combines the relative entropy and the modulated free energy to treat more general singular kernels in the sense of so-called ``weighted relative entropy", is originated in \cite{bresch2019mean,bresch2019modulated,bresch2023mean}. Later De Courcel-Rosenzweig-Serfaty \cite{de2023attractive} has applied it for the comprehensive study of the attractive log gas. Generation of chaos results are also obtained partially by Rosenzweig-Serfaty \cite{rosenzweig2023modulated,rosenzweig2024relative} using the time-uniform LSI for the limit density or the modulated Gibbs measure. 

We also recommend some new directions of analyzing the BBGKY hiearachy solved by the marginals of the particle system, to obtain the local optimal convergence rate of relative entropy, while previous global-to-local arguments give suboptimal rates. This observation is initiated in Lacker \cite{lacker2021hierarchies} for bounded Lipschitz kernels and Lacker-Le Flem \cite{lacker2023sharp} for sharp uniform-in-time estimates. Recently Wang \cite{wang2024sharp} has extended to the 2D Navier-Stokes equation with high viscosity. Qualitative propagation of chaos results for 2D Vlasov-Poisson has also been established by Bresch-Jabin-Soler \cite{bresch2022new} using the BBGKY hierarchy and the compacteness argument. A new duality method based on the cumulants for weak propagation of chaos for general square-integrable kernels is introduced in the recent work Bresch-Duerinckx-Jabin \cite{breschduerjab}.

Now we come back to the system \eqref{Eq:particle system} with Coulombian interacting kernel on the whole space $\R^2$. Serfaty firstly derived the mean field limit for system \eqref{Eq:particle system} with Coulombian and Riesz potentials in \cite{serfaty2020mean} without diffusion by modulated energy, then Rosenzweig-Serfaty \cite{rosenzweig2023global} extended this method to the stochastic version to treat the system with diffusion in $\R^d$, but this result is limited to sub-Coulombian potentials. Recently, the combination of modulated energy and relative entropy successfully solves the propagation of chaos for system \eqref{Eq:particle system} with Coulombian potentials on $\mathbb{T}^d$ by \cite{bresch2019mean,bresch2019modulated,de2023sharp}, but the same results on the whole space $\R^d$ are still open. Also the uniform-in-time propagation of chaos of system \eqref{Eq:particle system} is restricted to the case of dimension $1$ on the whole space in \cite{rosenzweig2023modulated}. The key issue of failure of the modulated free energy on the wholes space is that $\bar{\rho}$ has no longer the positive lower bound, compared with the torus case, which is also the main difficulty for the extension to whole space for the relative entropy method. Moreover, the mass of solutions of the limit equation \eqref{Eq:limiting equation} will escape to infinity as $t \rightarrow \infty$, which causes the new difficulty to prove the uniform-in-time propagation of chaos. In this article, we firstly extend the modulated free energy to the whole space when $d = 2$ and derive the finite time propagation of chaos in the sense of relative entropy.

Finally, we give some comments of modulated free energy to end this subsection. Besides the form \eqref{Weighted relative entropy} of some \textit{weighted relative entropy}, the structure of modulated free energy has another inspiring form which appears in \cite{rosenzweig2023modulated}, say the relative entropy between the joint law and the Gibbs-type measure for the limit density,
\begin{equation*}
E_N(\rho_N|\bar{\rho}_N) = \frac{1}{\beta} \bigg( \mathcal{H}(\rho_N|\mathbb{Q}_{N,\beta}(\bar{\rho})) + \frac{\log K_{N,\beta}(\bar{\rho})}{N}\bigg)
\end{equation*}
where the new measure
\begin{equation*}
\mathbb{Q}_{N,\beta}(\bar{\rho}) = \frac{1}{K_{N,\beta}(\bar{\rho})} e^{-NF_N(\mu_N, \bar{\rho})} \ud \bar{\rho}_N
\end{equation*}
is called the \textit{modulated Gibbs measure}, and $K_{N,\beta}(\bar{\rho})$ is the associated partition function,
\begin{equation*}
K_{N,\beta}(\bar{\rho}) = \int_{\R^{dN}}e^{-NF_N(\mu_N, \bar{\rho})} \ud \bar{\rho}_N.
\end{equation*}
The authors in \cite{rosenzweig2023modulated} have proved that 
\begin{equation*}
|\log K_{N,\beta}(\bar{\rho})| = o(N),
\end{equation*}
so that we can understand the modulated free energy as the new relative entropy $\frac{1}{\beta} \mathcal{H}(\rho_N|\mathbb{Q}_{N,\beta}(\bar{\rho}))$. Also we have the associated new relative Fisher information
\begin{equation*}
I_N(\rho_N|\mathbb{Q}_{N,\beta}(\bar{\rho})) = \frac{1}{N} \int_{\R^{dN}} \left|\nabla  \sqrt{\frac{\rho_N}{\mathbb{Q}_{N,\beta}(\bar{\rho})}}\right|^2 \ud \mathbb{Q}_{N,\beta}(\bar{\rho}),
\end{equation*}
which is exactly the negative term appeared in the time evolution of modulated free energy (See Lemma \ref{Lemma:evolution of MFE}). It has been called the \textit{modulated Fisher information} in \cite{rosenzweig2023modulated}. In this point of view, the uniform functional inequalities for $modulated\ Gibbs\  measure$ $\mathbb{Q}_{N,\beta}(\bar{\rho})$ are very important in further study for uniform-in-time results.

\subsection{Structure of the Article} We organize the rest of this article as follows. In Section \ref{A priori regularity estimates}, we investigate some \textit{a priori} regularity results and long time asymptotic behavior of the limit equation. In Section \ref{Hamilton type gradient estimates}, we firstly prove the sharp Gaussian lower bound for the solution of Eq.\eqref{Eq:limiting equation} by a general maximum principle on the whole space, then we show the time decay estimates of logarithmic gradient and Hessian applying the sharp Gaussian lower bound and this general . In Section \ref{Proof of the main result}, we finish the proof of our main result Theorem \ref{Thm:EPoC}. In Appendix, we give the elementary calculations for the proof of two technical lemmas for the logarithmic growth estimates in Section \ref{Hamilton type gradient estimates}.  

\section{\textit{A Priori} Regularity Estimates}\label{A priori regularity estimates}

In this section, we present some basic regularity estimates about the limit equation \eqref{Eq:limiting equation}, which are useful in deriving our key logarithmic growth estimates and computing the entropy dissipation. Basically, we study the well-posedness arguments and the functional space that the solution belongs to. Also we apply the Carlen-Loss \cite{carlen1996optimal} method and the parabolic maximum principle to provide the long time decay of the solution, thanks to the special structure of the limit equation which is similar to that considered in \cite{carlen1996optimal} and improvements in \cite{rosenzweig2023global}. For the asymtotic long time decay of the Sobolev norms of the solution, we adapt the Kato \cite{kato1994navier} argument to conclude our desired results. We shall conclude by giving the Gaussian upper and lower bounds of the limit density, similar to that of the heat equation.

\subsection{Global Well-posedness}\label{wellposedness}

Consider the following Cauchy problem of the limit equation in the whole Euclidean space $\R^d$ with general dimension $d$,
\begin{equation}\label{Eq:Cauchy problem}
\left\{
\begin{aligned}
& \partial_t \bar{\rho} + \text{div}_x \big((K \ast \bar{\rho}) \bar{\rho}\big) = \frac{1}{\beta} \Delta \bar{\rho}, \\
& \bar{\rho}(\cdot,0) = \bar{\rho}_0.
\end{aligned}
\right.
\end{equation}
Then we can introduce the mild formulation of the Cauchy problem \eqref{Eq:Cauchy problem}. With $e^{t \Delta}$ denoting the classical heat flow in the whole space $\R^d$, we write that
\begin{equation*}\label{Eq:mild solution}
\bar\rho_t = e^{\frac{1}{\beta}t\Delta} \bar\rho_0 - \int_0^t
e^{\frac{1}{\beta}(t-s)\Delta}\text{div}(\bar\rho_s K \ast \bar\rho_s) \ud s.
\end{equation*}
Given any initial data $\bar{\rho}_0 \in L^{\infty}(\R^d) \cap L^1(\R^d)$, by a contraction mapping argument for the mild formulation, De Courcel-Rosenzweig-Serfaty \cite[Proposition 3.1, Proposition 3.2]{de2023sharp} have shown existence, uniqueness and continuous dependence on initial data in the Banach space $C([0,T], L^{\infty}(\R^d) \cap L^1(\R^d))$. Using the dependence on initial data estimates, we see that we can always approximate the solution to the Cauchy problem \eqref{Eq:Cauchy problem} by $C^{\infty}$ solution. Then we have $\bar{\rho}(x,t) \in C^{\infty}(\R^2 \times \R^+)$ and solves \eqref{Eq:Cauchy problem} in the classical sense. For the convinence of readers, let us give a brief explanation for strategy above in the following. 

As we have mentioned above, We do contraction mapping argument in Banach space $ X = C([0,T], L^1(\R^d) \cap L^{\infty}(\R^d))$. We decompose the interacting kernel $K$ as $K = K_1 + K_2$, where $K_1$ is a short-range interacting kernel supported on $B_r(0) \subset \R^d$ for some $r > 0$ and $K_2$ is a long-range smooth interacting kernel supported outside the $B_r(0)$. Now we use the fixed point argument by contraction map
$\mathcal{T}$ on $C([0,T], L^1(\R^d) \cap L^{\infty}(\R^d))$,
\begin{equation}\label{Eq:contraction map}
\mathcal{T}(\rho^{k+1}) = e^{\frac{1}{\beta} t \Delta} \rho_0 - \int_0^t
e^{\frac{1}{\beta}(t-s)\Delta}\text{div}(\rho^k_s K \ast \rho^k_s) \ud s,
\end{equation}
we aim to show that $\mathcal{T}$ is a contraction on the ball $B_R \subset X$ for $R, T > 0$ appropriately chosen. Assume that $\bar{\rho}_1$ and $\bar{\rho}_2$ are two solutions of Cauchy problem \eqref{Eq:Cauchy problem} with initial data $\bar{\rho}_0$, by the triangle inequality, for any $t \geq 0$,
\begin{equation*}
\| \mathcal{T}(\bar{\rho}_1) - \mathcal{T}(\bar{\rho}_2)\|_X \leq \bigg \| \int_0^t
e^{\frac{1}{\beta}(t-s)\Delta}\text{div}(\bar{\rho}_1 K \ast \bar{\rho}_1) \ud s - \int_0^t
e^{\frac{1}{\beta}(t-s)\Delta}\text{div}(\bar{\rho}_2 K \ast \bar{\rho}_2) \ud s \bigg \|_X. 
\end{equation*}
We may use Minkowski's inequality together with $\| e^{\frac{1}{\beta}(t-s)\Delta} \text{div}\|_{L^1} \lesssim (\frac{1}{\beta}(t-s))^{-\frac{1}{2}}$ to obtain
\begin{equation*}
\begin{aligned}
\| \mathcal{T}(\bar{\rho}_1) - \mathcal{T}(\bar{\rho}_2)\|_{L^{\infty}}  \leq & \bigg \| \int_0^t
e^{\frac{1}{\beta}(t-s)\Delta}\text{div}(\bar{\rho}_1 K \ast (\bar{\rho}_1 - \bar{\rho}_2)) \ud s \bigg \|_{L^{\infty}} \\ & + \bigg \| \int_0^t
e^{\frac{1}{\beta}(t-s)\Delta}\text{div}((\bar{\rho}_1 - \bar{\rho}_2) K \ast \bar{\rho}_2) \ud s \bigg \|_{L^{\infty}} \\ \leq & C \sqrt{\beta t} \| \bar{\rho}_1 - \bar{\rho}_2 \|_{L^{\infty} \cap L^1}(\| \bar{\rho}_1  \|_{L^{\infty}} + \| \bar{\rho}_2 \|_{L^{\infty}})
\end{aligned}
\end{equation*}
and
\begin{equation*}
\begin{aligned}
\| \mathcal{T}(\bar{\rho}_1) - \mathcal{T}(\bar{\rho}_2)\|_{L^1}  \leq & \bigg \| \int_0^t
e^{\frac{1}{\beta}(t-s)\Delta}\text{div}(\bar{\rho}_1 K \ast (\bar{\rho}_1 - \bar{\rho}_2)) \ud s \bigg \|_{L^1} \\ & + \bigg \| \int_0^t
e^{\frac{1}{\beta}(t-s)\Delta}\text{div}((\bar{\rho}_1 - \bar{\rho}_2) K \ast \bar{\rho}_2) \ud s \bigg \|_{L^1} \\ \leq & C \sqrt{\beta t} \| \bar{\rho}_1 - \bar{\rho}_2 \|_{L^{\infty} \cap L^1} \|\bar{\rho}_2 \|_{L^{\infty}}, 
\end{aligned}
\end{equation*}
here we use $K_1 \in L^1$ and $K_2 \in L^{\infty}$ and the constant $C = C(\|K_1\|_{L^1}, \|K_2\|_{L^{\infty}}, d)$. Combining these two inequality, we have
\begin{equation*}
\| \mathcal{T}(\bar{\rho}_1) - \mathcal{T}(\bar{\rho}_2)\|_{L^{\infty} \cap L^1} \leq C \sqrt{\beta t} \| \bar{\rho}_1 - \bar{\rho}_2 \|_{L^{\infty} \cap L^1}(\| \bar{\rho}_1  \|_{L^{\infty} \cap L^1} + \| \bar{\rho}_2 \|_{L^{\infty} \cap L^1}).
\end{equation*}
Now we take $R$ such that $2\|\bar{\rho}_0\|_{L^{\infty}} \leq R$ and do similar argument as above for $\bar{\rho}_i \in B_R$ in Banach space $X$, we have
\begin{equation*}
\begin{aligned}
\| \mathcal{T}(\bar{\rho}_i)\|_X \leq \frac{R}{2} + 2CR^2\sqrt{\beta T}, \ \ \ i = 1, 2. 
\end{aligned}
\end{equation*}
Choosing $T > 0$ such that 
\begin{equation*}
2CR \sqrt{\beta T} = \frac{1}{2},
\end{equation*}
we have $\mathcal{T}(\bar{\rho}_i) \in B_R$ in $X$ for $i = 1,2$, and
\begin{equation*}
\| \mathcal{T}(\bar{\rho}_1) - \mathcal{T}(\bar{\rho}_2)\|_X \leq \frac{1}{2} \| \bar{\rho}_1 - \bar{\rho}_2 \|_X, 
\end{equation*}
which shows that $\mathcal{T}$ is a contraction on $B_R$. Now we gain a covergence squence $\{ \bar{\rho}^k(x,t)\}_{k=1}^{\infty}$ in $X$ by interation \eqref{Eq:contraction map}. Taking $k \rightarrow \infty$, we obtain the limit $\bar{\rho}$ as the unique solution of \eqref{Eq:Cauchy problem} with $T \leq \frac{1}{C \beta \|\bar{\rho}_0\|_{L^{\infty}}}$.
Now we assume that $\bar{\rho}_0 \in C([0,T], W^{2,1}(\R^d) \cap W^{2, \infty}(\R^d))$, then we have
\begin{equation*}\label{Eq:first order representation}
\nabla \bar{\rho} = e^{\frac{1}{\beta} t \Delta} (\nabla \bar{\rho}_0) - \int_0^t e^{\frac{1}{\beta}(t-s) \Delta}\text{div}( \nabla \bar{\rho}_s K \ast \bar{\rho}_s + \bar{\rho}_s K \ast \nabla \bar{\rho}_s) \ud s,
\end{equation*}
by similar argument as above, we have
\begin{equation*}\label{IEq: first order estimates}
\| \nabla \bar{\rho}_t \|_{L^1 \cap L^{\infty}} \leq \| \nabla \bar{\rho}_0 \|_{L^1 \cap L^{\infty}} + C(K, \| \bar{\rho} \|_{L^1 \cap L^{\infty}}) \int_0^t (\frac{1}{\beta}(t-s))^{-\frac{1}{2}} \| \nabla \bar{\rho}_s \|_{L^1 \cap L^{\infty}} \ud s, 
\end{equation*}
then we use general Gronwall inequality by (Theorem 1, \cite{YE20071075}), we have $\nabla \bar{\rho}_t \in L^1 \cap L^{\infty}$ for each $t \in [0,T]$. Similarly, 
\begin{equation*}\label{IEq:Second order estimates}
\| \nabla^2 \bar{\rho}_t \|_{L^1 \cap L^{\infty}} \leq \| \nabla^2 \bar{\rho}_0 \|_{L^1 \cap L^{\infty}} + C_1(K, \bar{\rho}, \nabla \bar{\rho}, T) + C_2(K, \bar{\rho}, \nabla \bar{\rho}) \int_0^t (\frac{1}{\beta} (t-s))^{-\frac{1}{2}} \| \nabla \bar{\rho}_s \|_{L^1 \cap L^{\infty}} \ud s. 
\end{equation*}
Then we continue this procedure and do contraction mapping argument in Banach space $C([0,T], W^{k,1}(\R^d) \cap W^{k, \infty}(\R^d))$, we can obtain any order regularity of the unique solution $\bar{\rho} \in C([0,T], L^1(\R^d) \cap L^{\infty}(\R^d))$ but with smaller time interval $[0,T]$. In the following section, we only need the solution $\bar{\rho}(x,t) \in C([0,T], W^{2,1}(\R^d) \cap W^{2, \infty}(\R^d))$. 

Finally, we show that the solution we obtain above is, in fact, global, i.e. $T_{\text{max}} = + \infty$. This fact has been verified by nonincreasing of $\|\bar{\rho}(\cdot, t)\|_{L^{\infty}}$ in Remark 3.9 in \cite{de2023sharp}. Let us give a brief explaination. We select a constant $c \geq 0$ and use the fact that $g$ is the unique distribution solution to the equation $- \Delta g = c_d \delta_0$. By integration by part, we have  
\begin{equation}\label{evolution of infty norm}
\begin{aligned}
\frac{d}{dt} \int_{\R^d}(\bar{\rho}_t - c)_+ \ud x & =  \int_{\{ \bar{\rho}_t \geq c\}}(\text{div}_x \big(\bar{\rho}_t(\nabla g \ast \bar{\rho}_t) \big) + \frac{1}{\beta} \Delta \bar{\rho}_t) \ud x \\ & = \int_{\{ \bar{\rho}_t \geq c\}} \nabla \bar{\rho}_t \cdot \nabla g \ast \bar{\rho}_t \ud x - c_{d,s} \int_{\{ \bar{\rho}_t \geq c\}} \bar{\rho}^2_t \ud x - \frac{1}{\beta} \int_{\{ \bar{\rho}_t = c\}} \nabla \bar{\rho}_t \cdot \frac{\nabla \bar{\rho}_t}{|\nabla \bar{\rho}_t|} \ud x. 
\end{aligned}
\end{equation}
The last term is nonpositive and may be discarded. For the first term, observing that $\nabla \bar{\rho}_t 1_{\{ \bar{\rho}_t \geq c\}} = \nabla (\bar{\rho}_t - c)_+$ a.e., we use integrating by parts again 
\begin{equation*}
\int_{\{ \bar{\rho}_t \geq c\}} \nabla \bar{\rho}_t \cdot \nabla g \ast \bar{\rho}_t \ud x = \int_{\R^d} \nabla (\bar{\rho}_t - c)_+ \cdot \nabla g \ast \bar{\rho}_t \ud x = c_{d,s}\int_{\R^d} (\bar{\rho}_t - c)_+ \bar{\rho}_t \ud x
\end{equation*}
Similarly, $\bar{\rho}_t 1_{\bar{\rho}_t \geq c}= (\bar{\rho}_t - c)_+ + c1_{\bar{\rho}_t \geq c}$, which implies the right-hand side of \eqref{evolution of infty norm} is less than
\begin{equation*}
\begin{aligned}
- c_{d,s}c\int_{\{\bar{\rho}_t \geq c\}} \bar{\rho}_t \ud x.
\end{aligned}
\end{equation*}
Now we take the constant $c$ such that $c \geq \sup \bar{\rho}_0$, since the function
\begin{equation*}
t \rightarrow \int_{\R^d}(\bar{\rho}_t - c)_+ \ud x
\end{equation*}
is nonincreasing, we have
\begin{equation*}
\int_{\R^d}(\bar{\rho}_t - c)_+ \ud x \equiv 0 \ \ \ \text{for all} \ t > 0.
\end{equation*}
We have shown that $\sup \bar{\rho}_t \leq \sup \bar{\rho}_0$ for all $t \geq 0$ and we conclude that the solution $\bar{\rho}$ we obtain before is global well-posed.

\subsection{Long Time Asymptotic Decay}

We are now in the step of showing the long time asymptotic decay, or optimal smoothing, of the limit equation \eqref{Eq:Cauchy problem} given initial data as above. The optimal decay rates in time, as one can expect, satisfies the same temporal decay estimates as the linear heat equation. This kind of estimates have been proved by Rosenzweig-Serfaty \cite{rosenzweig2023global} for \eqref{Eq:Cauchy problem} with sub-Coulomb potentials. It is worth mentioning that they improve the classical Carlen-Loss argument, which requires the convolution-type coefficient in the divergence to be divergence-free, to the more general case. We follow the same strategy to obtain the similar result for \eqref{Eq:Cauchy problem} with Coulomb potentials when $d \geq 2$. An important ingredient for proving this result is the following sharp form of Gross's logarithmic Sobolev inequality (LSI), see for instance Gross\cite{Gross} or Carlen-Loss \cite{carlen1996optimal}.

\begin{proposition}[Gross's LSI]\label{lsi}
Let $a>0$. Then for all $f \in H^1(\R^d)$, we have
\begin{equation}
\int_{\R^d} |f(x)|^2 \log \Big(\frac{|f(x)|^2}{\|f \|^2_{L^2}} \Big) \ud x+\Big(d+\frac{d \log a}{2}\Big)\int_{\R^d} |f(x)|^2 \ud x \leq \frac{a}{\pi }\int_{\R^d} |\nabla f(x)|^2 \ud x.
\end{equation}
\end{proposition}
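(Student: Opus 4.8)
The plan is to derive this sharp, scale-parametrized form of the logarithmic Sobolev inequality from the classical Gaussian logarithmic Sobolev inequality of Gross, using a scaling argument to optimize the constant. First I would recall the baseline statement: for the standard Gaussian measure $\ud\gamma(x) = (2\pi)^{-d/2} e^{-|x|^2/2}\ud x$ on $\R^d$, every $g \in H^1(\gamma)$ satisfies the entropy–energy inequality $\int g^2 \log(g^2/\|g\|_{L^2(\gamma)}^2)\,\ud\gamma \leq 2\int |\nabla g|^2\,\ud\gamma$. The task is to rewrite this with respect to Lebesgue measure and then insert a dilation parameter.

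The key steps, in order, are as follows. First I would pass from the Gaussian-measure formulation to the Lebesgue formulation by setting $g(x) = f(x) e^{|x|^2/4} (2\pi)^{d/4}$ (so that $g^2\,\ud\gamma = f^2\,\ud x$ up to the normalization), expanding $|\nabla g|^2$, and using the identity $\int f^2\, \udiv(x)\,\ud x = -\int x\cdot\nabla(f^2)\,\ud x$ together with $\int x\cdot\nabla f\, f\,\ud x = -\tfrac{d}{2}\int f^2\,\ud x$; the cross terms combine to produce the Lebesgue-measure "Euclidean" log-Sobolev inequality, which is exactly Proposition \ref{lsi} in the special case $a = \pi$. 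Second, I would introduce the dilation: apply the $a = \pi$ inequality to the rescaled function $f_\lambda(x) := \lambda^{d/2} f(\lambda x)$, which preserves the $L^2$ norm, and track how each term transforms — the entropy term is invariant, the term $\int|f|^2\ud x$ is invariant, $\int|f|^2\log(|f|^2/\|f\|^2)\ud x$ picks up an additive constant $-d\log\lambda\int|f|^2\ud x$ coming from the Jacobian inside the logarithm, and the Dirichlet energy scales as $\lambda^2\int|\nabla f|^2\ud x$. Third, I would choose $\lambda$ so that $\lambda^2 = a/\pi$, i.e. $\lambda = \sqrt{a/\pi}$, substitute, and collect the constant terms: the resulting additive coefficient is $d - d\log\lambda = d + \tfrac{d}{2}\log(\pi/a)$... and one checks this matches $d + \tfrac{d\log a}{2}$ after absorbing the dimensional constants from the $a=\pi$ normalization, giving precisely the stated inequality.

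The main obstacle — really the only delicate point — is bookkeeping the dimensional constants correctly through the two changes of variable, so that the coefficient of $\int|f|^2\ud x$ comes out as exactly $d + \tfrac{d\log a}{2}$ and the coefficient on the right is exactly $a/\pi$ with no stray factors of $2$ or $\pi$. One must be careful that the "base case" $a=\pi$ of the Euclidean inequality carries its own additive constant (it is $d$, not $0$), and that this constant is the seed which, after the dilation by $\sqrt{a/\pi}$, flows into the general $a$-dependent constant. Since this is a standard and well-documented computation (it is exactly the sharp form used by Carlen–Loss \cite{carlen1996optimal} and originally due to Gross \cite{Gross}), I would keep the exposition brief, state the scaling relations as a short display, and refer the reader to \cite{carlen1996optimal,Gross} for the verification of the constants rather than reproducing every line.
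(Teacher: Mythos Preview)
The paper does not actually prove this proposition: it is stated as a known result and attributed to Gross \cite{Gross} and Carlen--Loss \cite{carlen1996optimal} with no argument given. Your proposal therefore goes beyond what the paper does. The approach you outline --- passing from the Gaussian LSI to the Euclidean form via the substitution $g = (2\pi)^{d/4} e^{|x|^2/4} f$, then scaling by $f_\lambda(x) = \lambda^{d/2} f(\lambda x)$ to introduce the parameter $a$ --- is the standard derivation and is correct in structure.

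Two small bookkeeping slips to watch: the base case coming out of the standard Gaussian LSI is $a = 2\pi$ (right-hand coefficient $2$, left-hand constant $d + \tfrac{d}{2}\log(2\pi)$), not $a = \pi$; and the entropy of $f_\lambda$ picks up $+d\log\lambda\int f^2$, not $-d\log\lambda\int f^2$. With those corrected, choosing $\lambda^2 = a/(2\pi)$ yields exactly the stated constants. You already flagged constant-tracking as the delicate point, so these are easy fixes rather than gaps in the argument.
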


In our simplified setting, we assume that $\bar{\rho} \geq 0$ is a probability density function and $\bar{\rho} \in L^1(\R^2) \cap W^{2,\infty}(\R^2)$. We also assume on $\bar{\rho}_0$ the initial bounds \eqref{Ass:nabla-log}-\eqref{Ass:Gaussian-upper-bound}. These assumptions are consistent with those in our main result. In the following, we denote the vector field $\nabla g \ast \bar{\rho} = K \ast \bar{\rho}$ as $u(x,t)$.  The proof of the asymptotic decay of $L^p$ norms of $\bar{\rho}$ is essentially the same as \cite[Theorem 5]{carlen1996optimal}. The only difference is that \cite[Theorem 5, Condition (2.2)]{carlen1996optimal} can be replaced by the following condition
\begin{equation*}
\begin{aligned}
& \int_{\R^d} |u(x,t)|^q\Big(\nabla \cdot b(x,t)u(x,t)\Big) \ud x \geq 0,\ \  \text{for\ all}\ q \geq 0,\\
& \|b(\cdot,t)\|_{L^{\infty}} \le B(t) \ \ \ \ \text{and}\ \ \ \ \ \|c(\cdot,t)\|_{L^{\infty}} \leq C(t),
\end{aligned}
\end{equation*}
where $B(t)$ and $C(t)$ are some given continuous functions on $(0,\infty)$. By our notations in this paper, these conditions above read as
\begin{equation}
\begin{aligned}
\int_{\R^d} |\bar{\rho}(x,t)|^q\Big(\nabla \cdot u(x,t) \bar{\rho}(x,t)\Big) \ud x \geq 0,\ \  \text{for\ all}\ q \geq 0, \ \ \ \ \|u(\cdot,t)\|_{L^{\infty}} \le B(t), 
\end{aligned}
\end{equation}
which can exactly be verified. For the sake of completeness, we describe the complete proposition below without proof.
\begin{proposition}[Optimal Smoothing]\label{decaye1}
Suppose that $\bar{\rho} \in C([0,\infty);L^1(\R^d) \cap L^\infty(\R^d))$ is a solution to \eqref{Eq:Cauchy problem}. Let $1 \leq p \leq q \le \infty$, then for all $t>0$,
\begin{equation}
\|\bar{\rho}_t\|_{L^q} \leq \frac{K(q)}{K(p)}\bigg(\frac{4\pi t}{\beta\big(\frac{1}{p}-\frac{1}{q}\big)} \bigg)^{\frac{1}{q}-\frac{1}{p}}\|\bar{\rho}_0\|_{L^p},
\end{equation}
where
\begin{equation*}
K(q):=\frac{q'^{\frac{1}{q'}}}{q^{\frac{1}{q}}}, \qquad 1 \leq q \leq \infty.
\end{equation*}
\end{proposition}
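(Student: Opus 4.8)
The plan is to prove Proposition \ref{decaye1} by following the Carlen--Loss optimal-smoothing scheme \cite{carlen1996optimal} in the form adapted by Rosenzweig--Serfaty \cite{rosenzweig2023global}: the asserted estimate is exactly the heat-kernel $L^p\!\to\!L^q$ bound, and the only structural input beyond the linear heat flow is that the nonlinear drift is dissipative in every $L^q$ norm. The cases $p=q$ (trivial) and $q=\infty$ (obtained by letting $q\to\infty$ in the finite-$q$ bound, using $K(q)\to1$) are dispatched first, so I would fix $1\le p<q<\infty$.

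\emph{Step 1 (dissipativity of the drift).} I would first record the key algebraic identity: since $K=-\nabla g$ and $-\Delta g=\delta_0$, one has $\operatorname{div}(K\ast\bar\rho)=(-\Delta g)\ast\bar\rho=\bar\rho\ge0$. Testing \eqref{Eq:Cauchy problem} against $\bar\rho_t^{\,q-1}$ and integrating by parts then gives
\[
\frac{\ud}{\ud t}\int_{\R^d}\bar\rho_t^{\,q}\ud x \;=\; -\frac{4(q-1)}{\beta q}\int_{\R^d}\bigl|\nabla\bar\rho_t^{\,q/2}\bigr|^2\ud x \;-\;(q-1)\int_{\R^d}\bar\rho_t^{\,q+1}\ud x ,
\]
so the drift contribution is non-positive and may simply be discarded; this is precisely the sign condition $\int|\bar\rho|^q\,\operatorname{div}(u\bar\rho)\,\ud x\ge0$ stated before the proposition. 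The integrations by parts are legitimate because, by Section \ref{wellposedness}, $\bar\rho$ is a locally uniform limit of $C^\infty$ solutions in $W^{2,1}\cap W^{2,\infty}$, and the Gaussian upper bound \eqref{Ass:Gaussian-upper-bound} propagated with the smoothing already available supplies the decay at spatial infinity.

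\emph{Step 2 (the time-dependent $L^q$ flow and Gross's inequality).} Next I would run the exponent along an increasing path $t\mapsto q(t)$ on $[0,T]$ with $q(0)=p$, $q(T)=q$. Writing $N(t)=\|\bar\rho_t\|_{L^{q(t)}}$, $S(t)=\int\bar\rho_t^{\,q(t)}\ud x$ and $f=\bar\rho_t^{\,q(t)/2}$, differentiating $\log N=\tfrac1q\log S$ and using Step 1 (dropping the negative term) yields
\[
\frac{\ud}{\ud t}\log N \;\le\; \frac{\dot q}{q^2 S}\int_{\R^d} f^2\log\frac{f^2}{\|f\|_{L^2}^2}\ud x \;-\;\frac{4(q-1)}{\beta q^2 S}\,\|\nabla f\|_{L^2}^2 .
\]
I would then invoke Gross's LSI (Proposition \ref{lsi}) with a time-dependent parameter $a=a(t)>0$ to bound the entropy by $\tfrac a\pi\|\nabla f\|_{L^2}^2$ minus a multiple of $\|f\|_{L^2}^2$, and choose $a(t)=\dfrac{4\pi(q(t)-1)}{\beta\,\dot q(t)}$ so that the two Fisher-information terms cancel exactly, leaving the closed differential inequality
\[
\frac{\ud}{\ud t}\log N(t)\;\le\;-\frac{\dot q}{q^2}\Bigl(d+\frac d2\log\frac{4\pi(q-1)}{\beta\dot q}\Bigr).
\]

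\emph{Step 3 (integration and identification of constants).} Finally I would choose the path affine in $1/q$, namely $\tfrac1{q(t)}=\tfrac1p-(\tfrac1p-\tfrac1q)\tfrac tT$, which makes $\tfrac{\dot q}{q^2}=\tfrac1T(\tfrac1p-\tfrac1q)$ constant and $\tfrac{4\pi(q-1)}{\beta\dot q}=\bigl(\tfrac{4\pi T}{\beta(1/p-1/q)}\bigr)\tfrac{q(t)-1}{q(t)^2}$. Integrating the inequality of Step 2 over $[0,T]$, the constant term $d$ produces the power $\bigl(\tfrac{4\pi T}{\beta(1/p-1/q)}\bigr)^{\frac d2(1/q-1/p)}$, while the substitution $r=1/q(s)$ turns the logarithmic term into $\tfrac d2\int_{1/p}^{1/q}\bigl(\log r+\log(1-r)\bigr)\ud r$, whose primitive $r\log r-(1-r)\log(1-r)$ evaluates, using $1-\tfrac1q=\tfrac1{q'}$, exactly to $\log K(q)$ up to a linear term that cancels the residual linear contribution. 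Relabelling $T$ as $t$ then gives $\|\bar\rho_t\|_{L^q}\le(K(q)/K(p))^{d/2}\bigl(\tfrac{4\pi t}{\beta(1/p-1/q)}\bigr)^{\frac d2(1/q-1/p)}\|\bar\rho_0\|_{L^p}$, which for $d=2$ is the stated bound. I expect the main work to lie in the bookkeeping of Step 3 needed to land on the \emph{sharp} constant $K(q)=q'^{1/q'}/q^{1/q}$ rather than an unspecified one — this is where the exact $(q-1)/q=1/q'$ factors from Step 1 and the optimal constant in Gross's inequality must be tracked together — along with the more routine point of verifying that the singular-kernel term in Step 1 may genuinely be discarded at the level of the actual, only $W^{2,\infty}$-regular, solution via the approximation scheme of Section \ref{wellposedness}.
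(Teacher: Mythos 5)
Your proof is a correct and complete execution of the Carlen--Loss optimal-smoothing scheme, and it is precisely the argument the paper invokes: the paper states the proposition explicitly ``without proof,'' deferring to \cite[Theorem 5]{carlen1996optimal} with the drift condition there replaced by the sign condition that you derive in Step 1 from the identity $\operatorname{div}(K\ast\bar\rho)=\bar\rho\ge 0$. Your algebra in Steps 2--3 checks out, including the cancellation between the linear term $1-2r$ in the primitive $r\log r-(1-r)\log(1-r)+1-2r$ and the constant $d$ coming from Gross's LSI, and you implicitly correct the paper's omission of the $d/2$ exponents for general $d$ (the displayed inequality is the $d=2$ case, which is all the paper uses).

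One small point to repair: in Step 1 you justify the integrations by parts by invoking the Gaussian upper bound \eqref{Ass:Gaussian-upper-bound}, but this assumption is not among the hypotheses of Proposition \ref{decaye1}, and moreover the propagated Gaussian upper bound of Lemma \ref{Pro: Gaussian upper bound} is itself derived using the optimal-smoothing machinery, so the appeal is both out of scope and potentially circular. It is also unnecessary: the approximation by smooth solutions with Schwartz or compactly supported data from Section \ref{wellposedness} already legitimizes the integrations by parts, or one may insert cutoffs and pass to the limit using only $\bar\rho\in C([0,T];L^1\cap L^\infty)$ and the $W^{2,\infty}$ regularity already in hand.
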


We are now ready to derive the asymptotic decay of the $L^\infty$ norm of the limit equation \eqref{Eq:Cauchy problem}. 

\begin{lemma}[Asymptotic Decay]\label{alemma}
	Given any initial data $\bar{\rho}_0 \in L^{\infty}(\R^2)\cap L^1(\R^2)$, let $\bar\rho_t$ be the solution to the Cauchy problem \eqref{Eq:Cauchy problem}, one has that for any $t \in [0,\infty)$, $\bar{\rho}_t, u_t=K \ast \bar{\rho}_t \in L^{\infty}(\R^2)$ with 
    \begin{equation}
    \begin{aligned}
    & \|\bar{\rho}_t \|_{L^{\infty}} \leq \frac{\beta C}{1+t}, \\
	& \|u_t \|_{L^{\infty}} \le \frac{\sqrt{\beta} C}{\sqrt{1+t}}.
    \end{aligned}
	\end{equation}
\end{lemma}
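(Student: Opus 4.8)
The plan is to combine the $L^\infty$-$L^1$ smoothing estimate of Proposition 2.3 (applied with $p=1$, $q=\infty$) with the nonincreasing property of $\|\bar\rho_t\|_{L^\infty}$ established at the end of Section 2.1, and then bootstrap the decay of $\bar\rho_t$ into a decay for $u_t = K\ast\bar\rho_t$ via the usual splitting of the Riesz kernel into its near and far parts. Concretely: for $t\ge 1$, Proposition 2.3 with $p=1$, $q=\infty$ gives $\|\bar\rho_t\|_{L^\infty}\le \frac{K(\infty)}{K(1)}\big(\frac{4\pi t}{\beta}\big)^{-1}\|\bar\rho_0\|_{L^1} = \frac{C\beta}{t}$, using $\|\bar\rho_0\|_{L^1}=1$ and that $K(1)=K(\infty)=1$. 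For $0\le t\le 1$ we instead use the maximum-principle bound $\|\bar\rho_t\|_{L^\infty}\le\|\bar\rho_0\|_{L^\infty}$ from Section 2.1. Interpolating these two regimes (i.e. taking the larger constant) yields a bound of the form $\|\bar\rho_t\|_{L^\infty}\le \frac{C\beta}{1+t}$ with $C=C(\|\bar\rho_0\|_{L^\infty},\|\bar\rho_0\|_{L^1})$, which is the first claimed estimate after absorbing constants.

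For the velocity field, write $K = K\mathbf{1}_{B_1} + K\mathbf{1}_{B_1^c}$. The near part $K\mathbf{1}_{B_1}$ lies in $L^1(\R^2)$ (since $|K(x)|\sim|x|^{-1}$ is integrable near the origin in dimension $2$), so $\|(K\mathbf 1_{B_1})\ast\bar\rho_t\|_{L^\infty}\le \|K\mathbf1_{B_1}\|_{L^1}\|\bar\rho_t\|_{L^\infty}\lesssim \frac{\beta}{1+t}$. The far part $K\mathbf1_{B_1^c}$ lies in $L^\infty(\R^2)$, so $\|(K\mathbf1_{B_1^c})\ast\bar\rho_t\|_{L^\infty}\le \|K\mathbf1_{B_1^c}\|_{L^\infty}\|\bar\rho_t\|_{L^1}\lesssim 1$. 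Adding gives only $\|u_t\|_{L^\infty}\lesssim 1$, which is too weak; to get the $\frac{1}{\sqrt{1+t}}$ decay I would instead split at a $t$-dependent radius. Take the cutoff radius $R=R(t)$ and use the interpolation-type bound $\|K\mathbf 1_{B_R}\|_{L^{2,\infty}}$ or, more elementarily, estimate
\begin{equation*}
|u_t(x)| \le \int_{B_R} |K(x-y)|\bar\rho_t(y)\ud y + \int_{B_R^c}|K(x-y)|\bar\rho_t(y)\ud y \lesssim \|\bar\rho_t\|_{L^\infty} R + R^{-1}\|\bar\rho_t\|_{L^1},
\end{equation*}
since $\int_{B_R}|K(z)|\ud z \lesssim R$ and $\sup_{B_R^c}|K|\lesssim R^{-1}$. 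Optimizing over $R$ gives $R\sim (\|\bar\rho_t\|_{L^1}/\|\bar\rho_t\|_{L^\infty})^{1/2}$ and hence $\|u_t\|_{L^\infty}\lesssim \big(\|\bar\rho_t\|_{L^\infty}\|\bar\rho_t\|_{L^1}\big)^{1/2}\lesssim \big(\tfrac{\beta}{1+t}\big)^{1/2}$, which is exactly the second estimate.

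The main obstacle — really the only nontrivial point — is making sure the hypotheses of Proposition 2.3 genuinely apply, i.e. that the sign condition $\int_{\R^d}|\bar\rho|^q(\nabla\cdot(u\bar\rho))\ud x\ge 0$ holds; but this is precisely the computation carried out in Section 2.1 for the evolution of $\int(\bar\rho_t-c)_+$ (repulsivity of $K=-\nabla g$ makes the interaction term dissipative), so it is already available. Everything else is the standard heat-kernel-type smoothing plus the elementary Young/interpolation splitting of the singular kernel, and the constants depend only on $\beta$, $\|\bar\rho_0\|_{L^1}=1$ and $\|\bar\rho_0\|_{L^\infty}$, which matches the statement of the lemma.
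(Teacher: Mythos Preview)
Your proposal is correct and follows essentially the same route as the paper: maximum principle for short times plus Proposition~\ref{decaye1} with $p=1$, $q=\infty$ for long times to get the $\frac{\beta C}{1+t}$ decay of $\|\bar\rho_t\|_{L^\infty}$, then the near/far splitting of $K$ at a variable radius $R$ optimized to $R\sim(\|\bar\rho_t\|_{L^1}/\|\bar\rho_t\|_{L^\infty})^{1/2}$ to obtain $\|u_t\|_{L^\infty}\lesssim(\|\bar\rho_t\|_{L^\infty}\|\bar\rho_t\|_{L^1})^{1/2}$. The only cosmetic slip is that your displayed splitting should be over $|x-y|\le R$ rather than $|y|\le R$ (as your subsequent reasoning with $\int_{B_R}|K(z)|\,\ud z$ and $\sup_{B_R^c}|K|$ makes clear), and the sign condition for Proposition~\ref{decaye1} is verified in the paragraph preceding that proposition rather than in the $(\bar\rho_t-c)_+$ computation, but neither affects the argument.
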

\begin{proof}
    The fact that $\|\bar{\rho}_t\|_{L^{\infty}} \leq \|\bar{\rho}_0\|_{L^{\infty}}$ follows from the parabolic maximum principle and gives our result in the short time case. By the optimal smoothing Proposition \ref{decaye1} we have
    \begin{equation*}
    \|\bar{\rho}_t\|_{L^{\infty}} \leq \frac{\beta C}{t} \|\bar{\rho}_0\|_{L^1} \le \frac{\beta C}{t},
    \end{equation*}
    which gives our result in the long time case.
    For the $L^{\infty}$ estimates of $u_t$, by direct computation,
	\begin{equation*} 
    \begin{aligned}
    |K \ast \bar{\rho}_t(x)|
	& = \Big|\int_{\R^2} K(y) \bar{\rho}_t(x-y) \ud y \Big| \\
	& \leq \int_{|y| \leq R} |K(y)| \bar{\rho}_t(x-y) {\rm d}y + \int_{|y|> R} |K(y)| \bar{\rho}_t(x-y) \ud y \\
	& \leq R \|\bar{\rho}_t \|_{L^{\infty}}+\frac{1}{2\pi R} \|\bar{\rho}_t\|_{L^1}.
    \end{aligned}
	\end{equation*}
	Taking $R=1$ for instance, we deduce that $\|u_t \|_{L^{\infty}} \le C$, while taking $R=\sqrt{\frac{\|\bar{\rho}_t\|_{L^1}}{2 \pi \|\bar{\rho}_t\|_{L^\infty} }}$ as the optimal parameter, we get 
	\begin{equation*}
    \| K \ast \bar{\rho}_t \|_{L^{\infty}} \le \sqrt{\frac{1}{2\pi}} \|\bar{\rho}_t \|_{L^1}^{\frac{1}{2}} \|\bar{\rho}_t \|_{L^\infty}^{\frac{1}{2}} \le \sqrt{\frac{1}{2\pi}} \|\bar{\rho}_0 \|_{L^1}^{\frac{1}{2}} \|\bar{\rho}_0 \|_{L^\infty}^{\frac{1}{2}} \le \frac{\sqrt{\beta} C}{\sqrt{t}}.
	\end{equation*}
\end{proof}

Next we present another lemma to derive the long time decay rate of the $L^\infty$ norm of higher derivatives of the limit density, including time and spatial derivatives, in the sense of Kato's method.

\begin{lemma}[Asymptotic Decay for Higher Derivatives]\label{lemma2}
	Given the initial data $\bar{\rho}_0 \in \mathcal{S}(\R^2)$,  then the solution to the Cauchy problem \eqref{Eq:Cauchy problem} $\bar{\rho}_t \in \mathcal{S}(\R^2)$ and $u_t=K \ast \bar{\rho}_t \in \mathcal{S}(\R^2)$, with the asymptotic decay rate given by 
	\begin{equation} 
    \begin{aligned}
    &\|\partial_t^n \nabla^k \bar{\rho} \|_{L^{\infty}} \leq C(1+t)^{-1-k/2-n},  \\
	&\|\partial_t^n \nabla^k u \|_{L^{\infty}} \leq C(1+t)^{-1/2-k/2-n},
    \end{aligned}
	\end{equation}
	for any $t \in [0,\infty)$ and $n,k \in \mathbb{N}$.
\end{lemma}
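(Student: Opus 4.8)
The plan is to run everything through the mild (Duhamel) formulation
\[
\bar\rho_t = e^{\frac{t}{\beta}\Delta}\bar\rho_0 - \int_0^t e^{\frac{t-s}{\beta}\Delta}\,\mathrm{div}\,(\bar\rho_s u_s)\,\ud s,\qquad u_s=K\ast\bar\rho_s,
\]
together with the standard heat--semigroup smoothing bounds $\|\nabla^j e^{\frac{\tau}{\beta}\Delta}f\|_{L^p(\R^2)}\lesssim_\beta \tau^{-j/2-(1/q-1/p)}\|f\|_{L^q}$ for $1\le q\le p\le\infty$, and two structural reductions. First, every estimate on a derivative of $u=K\ast\bar\rho$ is reduced to one on a derivative of $\bar\rho$: for $1<p<\infty$ one has $\|\nabla^{j}u\|_{L^p}\lesssim_p\|\nabla^{j-1}\bar\rho\|_{L^p}$ by Calder\'on--Zygmund theory (recall $\nabla K$ is a Calder\'on--Zygmund kernel), while for $L^\infty$ one writes $\nabla^{j}u=K\ast\nabla^{j}\bar\rho$ and uses the splitting $\|K\ast h\|_{L^\infty}\lesssim\|h\|_{L^1}^{1/2}\|h\|_{L^\infty}^{1/2}$ exactly as in the proof of Lemma~\ref{alemma}. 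Second, every time derivative is reduced to spatial ones by repeatedly inserting the equation $\partial_t\bar\rho=\tfrac1\beta\Delta\bar\rho-u\cdot\nabla\bar\rho-\bar\rho^2$ (using $\mathrm{div}\,u=\bar\rho$) and expanding by Leibniz: once the spatial rates are available for $\bar\rho$ and $u$, reading off $\|\partial_t^n\nabla^k\bar\rho\|_{L^\infty}$ and $\|\partial_t^n\nabla^k u\|_{L^\infty}$ is bookkeeping (each $\partial_t$ costs exactly one power of $1+t$, matching the claimed exponents). So the core task is the family of spatial estimates $\|\nabla^k\bar\rho_t\|_{L^p}\lesssim(1+t)^{-(1-1/p)-k/2}$ for all $k$ and all $p$; the stated $L^\infty$ rates for $\bar\rho$ then follow from the Gagliardo--Nirenberg inequality $\|f\|_{L^\infty(\R^2)}\lesssim\|\nabla f\|_{L^p}^{2/p}\|f\|_{L^p}^{1-2/p}$ ($p>2$) applied to $\nabla^k\bar\rho$, and those for $u$ from the reduction above.

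Before the rates, I verify that the Schwartz class is propagated. Smoothness comes from running the contraction/bootstrap argument of Section~\ref{wellposedness} in $C([0,T];W^{m,1}\cap W^{m,\infty}(\R^2))$ for every $m$ and continuing it globally in time by the a priori bound $\|\bar\rho_t\|_{L^\infty}\le\|\bar\rho_0\|_{L^\infty}$; rapid spatial decay comes from redoing the same fixed point in the polynomially weighted spaces $\langle x\rangle^m(W^{m,1}\cap W^{m,\infty})$, the only new point being that $\mathrm{div}(\bar\rho_s u_s)$ lies in these spaces because $u_s=K\ast\bar\rho_s$ is bounded and smooth, so $\langle x\rangle^m\bar\rho_s u_s$ inherits the decay of $\langle x\rangle^m\bar\rho_s$ (alternatively one invokes the Gaussian upper bound proved in Section~\ref{Hamilton type gradient estimates}). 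Hence $\bar\rho_t,u_t\in\mathcal S(\R^2)$ for every $t$, so all norms below are finite at each fixed time and it only remains to quantify their decay.

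For the spatial rates I induct on $k$. The case $k=0$ is Lemma~\ref{alemma} together with $\|\bar\rho_t\|_{L^1}=1$ and interpolation; it also gives $\|u_t\|_{L^p}\lesssim(1+t)^{-(1/2-1/p)}$ for $p\in(2,\infty]$ (Hardy--Littlewood--Sobolev, and Lemma~\ref{alemma} at $p=\infty$). Assume the rates for all orders $\le k-1$ of $\bar\rho$ and $u$, in every $L^p$. Differentiating the Duhamel formula and splitting $\int_0^t=\int_0^{t/2}+\int_{t/2}^t$: on $[0,t/2]$ I load all $k+1$ derivatives onto the heat semigroup and use $\|\bar\rho_s u_s\|_{L^1}\lesssim(1+s)^{-1/2}$ (an order--zero bound, available from the base case via $\|\bar\rho_s\|_{L^{4/3}}\|u_s\|_{L^4}$), which yields a contribution $\lesssim(1+t)^{-(1-1/p)-k/2}$; on $[t/2,t]$ I keep a single derivative on the semigroup and Leibniz--expand $\nabla^k(\bar\rho_s u_s)$. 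The interior terms $\nabla^a\bar\rho_s\otimes\nabla^b u_s$ with $1\le a\le k-1$ are strictly lower order and give $\lesssim(1+t)^{-(1-1/p)-k/2}$ by the inductive hypothesis; the term $\bar\rho_s\otimes\nabla^k u_s$ is reduced to order $k-1$ of $\bar\rho$ by Calder\'on--Zygmund, hence also controlled by induction; and the remaining top--order term $(\nabla^k\bar\rho_s)\otimes u_s$ is estimated using the temporal decay $\|u_s\|_{L^\infty}\lesssim(1+s)^{-1/2}$ of Lemma~\ref{alemma}. The $L^1$ bounds $\|\nabla^k\bar\rho_t\|_{L^1}\lesssim(1+t)^{-k/2}$ that feed the $L^\infty$ estimates of $\nabla^k u$ are produced in the same induction (or from the $L^p$ bounds plus $\|\nabla^k f\|_{L^1}\lesssim\|\nabla^{k-1}f\|_{L^1}^{1/2}\|\nabla^{k+1}f\|_{L^1}^{1/2}$).

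The main obstacle is closing the bootstrap for the top--order term $(\nabla^k\bar\rho_s)\otimes u_s$ on $[t/2,t]$. On the whole space $\bar\rho$ has no positive lower bound --- in contrast with the torus case of \cite{bresch2019modulated}, where one works with $\log\bar\rho$ --- and this term enters linearly in the top--order norm with coefficient $\int_{t/2}^t(t-s)^{-1/2}\|u_s\|_{L^\infty}\,\ud s=O(1)$ that is not a priori small. My plan to handle it is threefold: base the Duhamel representation at a large time $T_0$ rather than at $0$, so that by Lemma~\ref{alemma} the factor $\|u_s\|_{L^\infty}\le C\sqrt{\beta}/\sqrt{1+T_0}$ occurring in this term is genuinely small; use the divergence identity $\mathrm{div}\,u=\bar\rho$ to rewrite the worst pieces of $u\cdot\nabla(\nabla^{k-1}\bar\rho)$ as $\nabla\cdot(u\otimes\nabla^{k-1}\bar\rho)-\bar\rho\,\nabla^{k-1}\bar\rho$, trading the slowly decaying factor $u$ for the faster $\|\bar\rho_s\|_{L^\infty}\lesssim(1+s)^{-1}$; and then run a generalized (weakly singular) Gronwall inequality on the weighted quantity $\sup_{T_0\le s\le t}(1+s)^{(1-1/p)+k/2}\|\nabla^k\bar\rho_s\|_{L^p}$, with the compact interval $[0,T_0]$ absorbed into constants by parabolic regularity. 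This is exactly the step whose lack of sharpness is inherited by the logarithmic growth constant in Theorem~\ref{Thm:EPoC}; everything else is routine heat--semigroup bookkeeping.
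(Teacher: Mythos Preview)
Your approach is genuinely different from the paper's. The paper does not run a self-contained Duhamel bootstrap; instead it (i) propagates Schwartz regularity via a Gr\"onwall argument on weighted $L^2$ moments of higher derivatives together with Gagliardo--Nirenberg, giving the short-time bounds, and then (ii) for the long-time decay simply invokes Kato's result for the 2D vorticity equation \cite{kato1994navier} as a black box (noting that $K$ and the Biot--Savart kernel share the relevant structure) to obtain $t^{n+k/2+1-1/q}\|\partial_t^n A^k\bar\rho\|_{L^q}\le K$ with $A=(-\Delta)^{1/2}$, after which the $\nabla^k$ bounds follow from a second Gagliardo--Nirenberg step. Your direct Duhamel induction is essentially an attempt to re-derive Kato's theorem from scratch, which is more self-contained but also considerably more delicate.

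The weak point is exactly where you flag it: closing the top-order term. Your three-part plan---restart at $T_0$, exploit the divergence identity, then apply a weakly singular Gr\"onwall---does not succeed as written. The problematic coefficient $\int_{t/2}^t(t-s)^{-1/2}\|u_s\|_{L^\infty}\,\ud s$ is $O(1)$ with constant governed by $\sup_s\sqrt{s}\,\|u_s\|_{L^\infty}$, which by Lemma~\ref{alemma} depends on $\beta,\|\bar\rho_0\|$ and is \emph{not} small for general data; restarting at large $T_0$ does not shrink it because this quantity is scale-invariant. A weakly singular Gr\"onwall on the weighted supremum then only yields $M_k(t)\le C+C'M_k(t)$ with $C'$ not small, which does not close. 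Kato's actual argument is a more careful bilinear/continuation scheme in weighted-in-time spaces; if you want a self-contained proof you should follow \cite{kato1994navier} rather than improvise the closure.

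Finally, your last sentence misattributes the suboptimal time growth in Theorem~\ref{Thm:EPoC}: that comes from the extra $\log(1+t)$ in the logarithmic gradient and Hessian estimates of Section~\ref{Hamilton type gradient estimates} (cf.\ Remark~\ref{Rmk: optimal log estimates}), not from the present lemma, whose decay rates are already the sharp heat-kernel rates.
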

\begin{proof}
	By applying the Gr\"onwall's argument for the weighed moment of higher order derivatives
	\begin{equation*} 
    \sum_{k=0}^n \int_{\R^2} (1+|x|^2)^m |\nabla^k \bar{\rho}_t(x)|^2 \ud x
	\end{equation*}
	(for arbitrarily large integers $m,n \in \mathbb{N}$) and using the Gagliardo-Nirenberg inequality
	\begin{equation*}
    \|\nabla^j f \|_{L^{\infty}} \leq C \|\nabla^m f\|_2^a \|f \|_2^{1-a} ,\qquad \frac{j}{2}+\frac{1}{2}=\frac{am}{2},
	\end{equation*}
	it is easy to see that if the initial data $\bar{\rho}_0 \in \mathcal{S}(\R^2)$, then it remains spatially Schwartz for $\bar{\rho}_t$. Using the limit evolution equation \eqref{Eq:Cauchy problem}, we can also prove that $\partial_t^n \bar{\rho}_t$ is also in $\mathcal{S}(\R^2)$ by a simple bootstrap argument. Thus given any $k \in N$, there exists a $t_k>0$ such that 
	\begin{equation*} 
    \|\partial_t^n \nabla^k \bar{\rho} \|_{L^{\infty}} \le C   
	\end{equation*}
	for $t \in [0,t_k]$, where $C$ depends on the initial data $\bar{\rho}_0$. And by $\nabla^k u=K \ast \nabla^k \bar{\rho}$ we have for $t \in [0,t_k]$,
    \begin{equation*}	
    \|\partial_t^n \nabla^k  u \|_{L^{\infty}} \leq C.
    \end{equation*} 
  For the long time decay, we note that the kernel $K(x)=\frac{1}{2\pi}\frac{x}{|x|^2}$ has the similar properties with $S(x)=\frac{1}{2\pi}\frac{(x_2,-x_1)}{|x|^2}$, so we can use the same method in \cite{kato1994navier} to deduce that
  \begin{equation*} 
  t^{n+\frac{k}{2}+1-\frac{1}{q}}\|\partial_t^n A^k \bar{\rho} \|_{L^q} \leq K \qquad q \in (1,\infty],
  \end{equation*}
  where $A=(-\Delta)^{\frac{1}{2}}$. Since we have proved that $\partial_t^n \bar{\rho}_t \in \mathcal{S}(\R^2)$, we have the following inequality
  \begin{equation*} 
  \|\partial_t^n \nabla^k  \bar{\rho}_t \|_{L^2} \le \|\partial_t^n A^k  \bar{\rho}_t \|_{L^2}
  \end{equation*}
  using the Gagliardo-Nirenberg inequaility again we deduce that
  \begin{equation*}
  \|\partial_t^n \nabla^k \bar{\rho}_t \|_{L^{\infty}} \leq c \|\partial_t^n \nabla^{2+2k} \bar{\rho}_t\|_{L^2}^{\frac{1}{2}} \|\bar{\rho}_t \|_{L^2}^{\frac{1}{2}} \leq  C \|\partial_t^n A^{2+2k} \bar{\rho}_t\|_{L^2}^{\frac{1}{2}} \|\bar{\rho} \|_{L^2}^{\frac{1}{2}} \le \frac{C}{t^{1+\frac{k}{2}+n}} 
  \end{equation*}
 And using the same method in Lemma \ref{alemma} and the bounds with  $\|\nabla^k \bar{\rho}_t \|_{L^{\infty}}$ we can deduce that
 \begin{equation*}
 \|\partial_t^n\nabla^k u_t \|_{L^{\infty}} \le  \frac{C}{t^{\frac{1}{2}+\frac{k}{2}+n}} .
 \end{equation*}
 So we complete the proof.
\end{proof}
 
Finally we present the Gaussian upper bound of the limit density, which is also in the sense of Carlen-Loss \cite{carlen1996optimal} argument for the upper bound estimate of the heat kernel type.
 
\begin{lemma}[Gaussian Upper Bound]\label{Pro: Gaussian upper bound}
Assume that the initial data $\bar{\rho}_0$ satisfies the Gaussian upper bound as in our main result,
	\begin{equation*} \bar{\rho}_0(x) \leq C_0 \exp(-C_0^{-1}|x|^2).
	\end{equation*}
Then we have the following Gaussian upper bound for $\bar{\rho}_t$ for any $t \in [0,T]$
	\begin{equation*} 
    \bar{\rho}(x,t) \le \frac{C}{1+t}\exp\Big(-\frac{|x|^2}{8t+C}\Big).
	\end{equation*}
\end{lemma}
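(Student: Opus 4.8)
\emph{Overview.} The plan is a parabolic comparison argument against an explicit Gaussian supersolution, in the spirit of Carlen--Loss, feeding in the decay estimates of Lemma~\ref{alemma}. First I would use the structure of the kernel to put the equation in a form suited to comparison: since $K=-\nabla g$ with $-\Delta g=\delta_0$, the drift in \eqref{Eq:Cauchy problem} is divergence-critical, $\operatorname{div}(K\ast\bar{\rho})=-\Delta g\ast\bar{\rho}=\bar{\rho}$, so writing $u:=K\ast\bar{\rho}$ and expanding $\operatorname{div}(\bar{\rho}\,u)=u\cdot\nabla\bar{\rho}+\bar{\rho}^2$ turns \eqref{Eq:Cauchy problem} into $\partial_t\bar{\rho}+u\cdot\nabla\bar{\rho}-\frac1\beta\Delta\bar{\rho}=-\bar{\rho}^2\le 0$. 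Hence $\bar{\rho}$ is a subsolution of the linear advection--diffusion operator $\mathcal{L}_0:=\partial_t+u\cdot\nabla-\frac1\beta\Delta$, with a drift controlled by $\|u_t\|_{L^\infty}\le \sqrt\beta\,C(1+t)^{-1/2}$ from Lemma~\ref{alemma}.

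\emph{The Gaussian supersolution.} For parameters $\phi_0,\delta>0$ to be chosen, set $\phi(t):=\phi_0+(\frac2\beta+\delta)t$ and $\Phi_t(x):=A\,e^{-|x|^2/(2\phi(t))}$. A direct computation gives
\[
\frac{\mathcal{L}_0\Phi_t}{\Phi_t}=\frac{|x|^2}{\phi^2}\Big(\frac{\phi'}{2}-\frac1\beta\Big)-\frac{u\cdot x}{\phi}+\frac{d}{\beta\phi}=\frac{\delta}{2}\,\frac{|x|^2}{\phi^2}-\frac{u\cdot x}{\phi}+\frac{d}{\beta\phi},
\]
where the positive first term — the surplus created by moving the Gaussian slightly faster than heat scaling — is exactly what is needed to absorb the linear drift contribution: by Young's inequality $\frac{|u\cdot x|}{\phi}\le\frac{\delta}{2}\frac{|x|^2}{\phi^2}+\frac{\|u_t\|_{L^\infty}^2}{2\delta}$, so
\[
\mathcal{L}_0\Phi_t\ \ge\ \Phi_t\Big(\frac{d}{\beta\phi(t)}-\frac{\|u_t\|_{L^\infty}^2}{2\delta}\Big)\ \ge\ 0 ,
\]
the last step because $\|u_t\|_{L^\infty}^2\lesssim(1+t)^{-1}$ decays at precisely the same rate as $\phi(t)^{-1}$, so $\mathcal{L}_0\Phi_t\ge 0$ holds once $\delta$ (and, if necessary, $\phi_0$ or a mild time-dependence of $\delta$) is chosen large enough in terms of $\beta$ and the constant of Lemma~\ref{alemma}. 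Thus $\Phi_t$ is a supersolution of $\mathcal{L}_0$.

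\emph{Comparison and conclusion.} Take $A:=C_0$ and $\phi_0:=C_0/2$; then the initial Gaussian bound \eqref{Ass:Gaussian-upper-bound} gives $\bar{\rho}_0\le\Phi_0$. Since $\bar{\rho}_t$ is Schwartz (Lemma~\ref{lemma2}, after first approximating $\bar{\rho}_0$ by Schwartz data and invoking the continuous dependence on data of Section~\ref{wellposedness}) and $\Phi_t$ is Gaussian, both $\bar{\rho}_t$ and $\Phi_t$ vanish at spatial infinity uniformly on $[0,T]$, so the parabolic comparison principle on $\R^2$ applies and yields $\bar{\rho}_t(x)\le C_0\,e^{-|x|^2/(C_0+(\frac4\beta+2\delta)t)}$. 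Finally, I would interpolate this tail bound against the amplitude decay $\|\bar{\rho}_t\|_{L^\infty}\le\beta C(1+t)^{-1}$ of Lemma~\ref{alemma} (writing $\bar{\rho}_t\le\bar{\rho}_t^{1-\theta}\bar{\rho}_t^{\theta}$, or simply taking the minimum of the two bounds and re-absorbing the prefactor into a slightly larger exponent denominator) to upgrade the constant prefactor to the claimed $C(1+t)^{-1}$ and obtain the stated Gaussian with denominator $8t+C$.

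\emph{Main obstacle.} The crux is the drift term $u\cdot\nabla\Phi$. Because $\|u_t\|_{L^\infty}$ decays only like $(1+t)^{-1/2}$, its square is of the exact borderline order $(1+t)^{-1}$ — the same as the $\frac{d}{\beta\phi}$ term against which it must be absorbed — which is why an extra slope $\delta$ beyond heat scaling is forced, and, more fundamentally, why $\int_0^\infty\|u_s\|_{L^\infty}^2\,ds$ diverges (logarithmically). Consequently the comparison argument alone cannot produce the sharp heat-kernel prefactor $(1+t)^{-1}$ and the sharp Gaussian width simultaneously; the two are reconciled only by combining with Lemma~\ref{alemma}, at the price of non-optimal constants in the exponent (this is also the source of the suboptimal time growth noted in the remarks). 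A secondary technical point is justifying the comparison principle on the unbounded domain — equivalently, in the Davies-conjugated formulation $v=\bar{\rho}\,e^{|x|^2/(2\phi)}$, the fact that $v$ need not decay at infinity a priori — which is handled via the Schwartz decay of Lemma~\ref{lemma2} together with a spatial truncation and approximation.
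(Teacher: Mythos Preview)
Your route is genuinely different from the paper's, and its last step does not close to the stated bound.

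\textbf{What the paper does.} The paper does not build a Gaussian supersolution. It invokes the Carlen--Loss pointwise kernel bound \cite[Theorem~3, (1.10)]{carlen1996optimal} (valid here under the modified sign condition $\int|\bar\rho|^q(\nabla\!\cdot u)\,\bar\rho\ge0$ already verified for Proposition~\ref{decaye1}), which gives directly
\[
\bar\rho(x,t)\ \le\ C\int_{\R^2}\frac{1}{t}\exp\Big(-\frac{|x-y|^2}{8t}\Big)\bar\rho_0(y)\,\ud y,
\]
and then simply convolves this with the assumed initial Gaussian. The $(1+t)^{-1}$ prefactor and the $8t+C$ width fall out simultaneously from the Gaussian--Gaussian convolution; no interpolation against Lemma~\ref{alemma} is needed.

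\textbf{Where your argument falls short.} Your supersolution computation is correct up to $\mathcal{L}_0\Phi_t/\Phi_t\ge \tfrac{d}{\beta\phi(t)}-\tfrac{\|u_t\|_{L^\infty}^2}{2\delta}$, but the right side is nonnegative for all $t$ only if $\phi(t)\le \tfrac{4\delta}{\beta^2 C^2}(1+t)$, which for $t\to\infty$ forces $\tfrac{2}{\beta}+\delta\le \tfrac{4\delta}{\beta^2 C^2}$; this has a solution in $\delta>0$ only when $\beta C<2$, with $C$ the constant of Lemma~\ref{alemma}. There is no reason this smallness holds, so the constant-amplitude supersolution may fail. The fixes you mention (growing amplitude or time-dependent $\delta$) restore supersolution status but produce a prefactor $(1+t)^{\gamma}$ or a width growing faster than $t$. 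More importantly, even granting a bound $\bar\rho_t\le C_0 e^{-|x|^2/(c_1+c_2 t)}$, the interpolation $\bar\rho_t\le\bar\rho_t^{1-\theta}\bar\rho_t^{\theta}$ with $\|\bar\rho_t\|_{L^\infty}\lesssim(1+t)^{-1}$ yields a prefactor $(1+t)^{-(1-\theta)}$, strictly weaker than $(1+t)^{-1}$ for every $\theta>0$, and the ``minimum'' trick has the same defect. Your approach therefore delivers $\tfrac{C_\epsilon}{(1+t)^{1-\epsilon}}e^{-|x|^2/(c_\epsilon t+C)}$ for arbitrary $\epsilon>0$, which suffices for every downstream use in this paper, but not the lemma as stated. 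The Carlen--Loss kernel bound is precisely what gives the sharp amplitude and the $O(t)$ width at once.
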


\begin{proof}
The proof of Proposition \ref{decaye1} tells us that \cite[Theorem 5, Condition (2.2)]{carlen1996optimal} can be replaced by the following assumptions 
	\begin{equation*}
    \begin{aligned}
    &\int_{\R^n} |u(x,t)|^q\Big(\nabla \cdot b(x,t)u(x,t)\Big) \ud x \geq 0, \ \ \ \ \ \text{for all}\ \ q \geq 0,\\
	&\|b(\cdot,t)\|_{L^{\infty}} \le B(t), \qquad and \qquad \|c(\cdot,t)\|_{L^{\infty}} \le C(t),
    \end{aligned}
	\end{equation*}
	where $B(t)$ and $C(t)$ are given continuous functions on $(0,\infty)$. So the equality \cite[Theorem 3, (1.10)]{carlen1996optimal} still holds for our limit equation \eqref{Eq:Cauchy problem}, i.e.
	\begin{equation*} 
    \begin{aligned}
    \bar{\rho}(x,t) & \leq C \int_{\R^2} \frac{1}{t}\exp\Big(-\frac{|x-y|^2}{8t}\Big)\bar{\rho}_0(y) \ud y \\
	& \le  C \int_{\R^2} \frac{1}{t}\exp\Big(-\frac{|x-y|^2}{8t}-\frac{|y|^2}{C}\Big) \ud y \\
	& \le \int_{\R^2} \frac{1}{t}\exp\Big(-\frac{|x|^2}{8t+C}\Big)\exp\Big(-\frac{4t+C}{4Ct}|y|^2\Big) \ud y \\
	& \leq \frac{C}{1+t}\exp\Big(-\frac{|x|^2}{8t+C}\Big).
    \end{aligned}
	\end{equation*}
\end{proof}

\section{Logarithmic Growth Estimates}\label{Hamilton type gradient estimates}

In the previous work \cite{feng2023quantitative}, Feng and Wang derive some kind of logarithmic gradient and Hessian estimates  for the solution of 2D Navier-Stokes equation by a related maximum principle of diffusion operator developed in Grigor'yan \cite{grigor2006heat,grigoryan2009heat}. With the help of these estimates, they prove the propagation of chaos for 2D viscous vortex model on the whole space. In this section, we show the same maximum principle for our limit equation \eqref{Eq:limiting equation} with $\beta > 0$. Using this kind of maximum principle, we prove the logarithmic growth estimates for the solution of the limit equation \eqref{Eq:limiting equation} with $\beta > 0$.

\begin{theorem}[Logarithmic Gradient Estimate]\label{Thm:Logarithm gradient estimate}
Assume that the initial data $\bar{\rho}_0$ satisfies the following logarithmic growth conditions,
\begin{equation}\label{Thm:LGE-ass1}
\begin{aligned}
|\nabla \log \bar{\rho}_0(x)| & \lesssim 1 + |x|, \\ 
|\nabla^2 \log \bar{\rho}_0(x)| & \lesssim 1 + |x|^2,
\end{aligned}
\end{equation}
and the Gaussian upper bound that there exists some $C_0> 0$ such that
\begin{equation}\label{Thm:LGE-ass2}
\bar{\rho}_0(x) \leq C_0 \exp{(-C_0^{-1}|x|^2)}.
\end{equation}
Then we have the linear growth upper bound for the gradient of $\log \bar{\rho}$,
\begin{equation}\label{Thm:LGE-result}
|\nabla \log \bar{\rho}|^2 \leq \frac{C}{1+t}\Big(1 + \log(1+t) + \frac{|x|^2}{1+t}\Big)
\end{equation}
for some constant $C = C(\beta,C_0, \bar{\rho}) > 0$.
\end{theorem}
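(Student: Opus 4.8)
The plan is to run a parabolic maximum principle argument on a well-chosen auxiliary quantity, in the spirit of Hamilton's gradient estimate, but adapted to the whole space via Grigor'yan's maximum principle (which tolerates unbounded domains provided one controls growth at infinity through the Gaussian bounds). Write $f = \log \bar{\rho}$. Differentiating the equation $\partial_t f + (K \ast \bar{\rho}) \cdot \nabla f = \frac{1}{\beta}\Delta f + \frac{1}{\beta}|\nabla f|^2 + (\nabla \cdot K \ast \bar{\rho})$ (note $\nabla \cdot K \ast \bar{\rho} = -\bar{\rho}$ since $-\Delta g = \delta_0$), one gets a drift-diffusion inequality for $w := |\nabla f|^2$. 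The key structural feature is that the nonlinear term $\frac{2}{\beta}|\nabla^2 f|^2$ produced by differentiating $|\nabla f|^2$ is good (it dominates $\frac{1}{\beta d}|\Delta f|^2$ and can absorb cross terms), while the dangerous terms are $\nabla f \cdot (\nabla^2 g \ast \bar{\rho}) \cdot \nabla f$ and $\nabla f \cdot \nabla(\nabla \cdot K \ast \bar{\rho}) = -\nabla f \cdot \nabla \bar{\rho} = -\bar{\rho}\, \nabla f \cdot \nabla f = -\bar{\rho}\, w$ — the latter is actually favorable (it has a good sign since $\bar{\rho} \geq 0$), and the former is bounded by $\|\nabla^2 g \ast \bar{\rho}\|_{L^\infty} w$, for which we do not have a time-decaying $L^\infty$ bound directly. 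I expect one must instead use the bound $\|\nabla u_t\|_{L^\infty} \lesssim (1+t)^{-1}$ from Lemma~\ref{lemma2} (with $k=1$), so this term contributes $\frac{C}{1+t} w$, consistent with the $\frac{1}{1+t}$ prefactor in the claimed estimate.

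Concretely, I would fix a large time horizon $T$ and a small $\varepsilon>0$, then consider the quantity
\begin{equation*}
\Phi(x,t) = \frac{(1+t)\,|\nabla f(x,t)|^2}{1 + \log(1+t) + \frac{|x|^2}{1+t} + \lambda}
\end{equation*}
for a constant $\lambda$ to be chosen large, or more robustly work with $G(x,t) = \eta(t)|\nabla f|^2 - \phi(x,t)$ where $\eta(t) = \frac{1+t}{C}$ and $\phi$ is built from the Gaussian upper bound exponent so that $\phi \gtrsim \frac{|x|^2}{1+t}$ and $\partial_t \phi - \frac{1}{\beta}\Delta\phi$ has the right sign at infinity. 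The strategy: show that at an interior maximum of the relevant quantity, the maximum principle inequality $\partial_t G \le \frac{1}{\beta}\Delta G + (\text{drift})\cdot \nabla G + (\text{lower order})$ forces $G$ to be bounded by its value determined by the initial data and by the algebraic structure; the initial bound $|\nabla f_0|^2 \lesssim (1+|x|^2)^2$ is consumed by the $\frac{1}{1+t}\cdot\frac{|x|^2}{1+t}$ term in the target once $t$ is bounded below, and for small $t$ one uses continuity and the $W^{2,\infty}$ regularity from Section~\ref{A priori regularity estimates}. Grigor'yan's criterion requires verifying that $\bar{\rho}$ and its derivatives do not grow too fast at spatial infinity — this is exactly supplied by the Gaussian upper bound Lemma~\ref{Pro: Gaussian upper bound} and the Schwartz-class decay in Lemma~\ref{lemma2}, which is why those were proved first.

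The main obstacle, I expect, is twofold. First, the term $\nabla f \cdot (\nabla^2 g \ast \bar{\rho}) \cdot \nabla f$: controlling $\|\nabla^2 g \ast \bar{\rho}\|_{L^\infty} = \|\nabla u\|_{L^\infty}$ with the sharp $(1+t)^{-1}$ rate (rather than a crude bound) is essential to get the $\frac{1}{1+t}$ prefactor and the $\log(1+t)$ correction rather than a power of $t$; tracking constants so that the good term $\frac{2}{\beta}|\nabla^2 f|^2$ genuinely dominates after completing squares is delicate. Second, and more subtle, is justifying the maximum principle on $\R^2$: one must either confine the argument to balls $B_R$ with a cutoff (à la Li–Yau, introducing $R^{-1}, R^{-2}$ error terms that vanish as $R \to \infty$ once one knows $|\nabla f|$ has at most polynomial growth, which itself needs the a priori estimates) or invoke Grigor'yan directly after checking the integrability/growth hypotheses. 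The $\log(1+t)$ loss in \eqref{Thm:LGE-result}, as the authors remark, is the price of the maximum-principle route and reflects the suboptimality; a cleaner heat-kernel comparison would remove it but seems hard to set up with the nonlocal drift.
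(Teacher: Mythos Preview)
Your overall framework—parabolic maximum principle on an auxiliary quantity, justified on $\R^2$ via Grigor'yan's criterion with the Gaussian bounds supplying the growth hypotheses, and the $(1+t)^{-1}$ prefactor traced to $\|\nabla u_t\|_{L^\infty}\lesssim(1+t)^{-1}$—is exactly right, and your diagnosis of the $\log(1+t)$ loss matches the authors' own remark. But the paper's implementation differs from both of your proposed auxiliaries, and is cleaner.

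The paper does not work with $w=|\nabla f|^2$ directly. Instead it uses the $\bar\rho$-weighted pair $\frac{|\nabla\bar\rho|^2}{\bar\rho}=\bar\rho\,|\nabla f|^2$ and $\bar\rho\log\bar\rho$, for which Lemma~\ref{Lemma:LGE} gives
\[
(\partial_t+q-\Delta_f)\Big(\frac{|\nabla\bar\rho|^2}{\bar\rho}\Big)\le\frac{C}{1+t}\,\frac{|\nabla\bar\rho|^2}{\bar\rho},\qquad
(\partial_t+q-\Delta_f)(\bar\rho\log\bar\rho)\le-\frac{|\nabla\bar\rho|^2}{\bar\rho},
\]
where $q=\bar\rho$ and $\Delta_f=\frac{1}{\beta}\Delta+u\cdot\nabla$ has \emph{bounded} drift $u=K\ast\bar\rho$. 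The long-time auxiliary is then simply $\tilde F=\frac{t}{C+1}\frac{|\nabla\bar\rho|^2}{\bar\rho}+\bar\rho\log\bar\rho-C_1\bar\rho$; the two displayed inequalities combine to give $(\partial_t+q-\Delta_f)\tilde F\le 0$, Grigor'yan's theorem yields $\tilde F\le 0$, and dividing through by $\bar\rho$ produces $|\nabla\log\bar\rho|^2\le\frac{C+1}{t}(C_1-\log\bar\rho)$. The spatial barrier you wanted to build by hand as an explicit $\phi(x,t)$ is supplied automatically by $-\log\bar\rho$ via the Gaussian \emph{lower} bound (Lemma~\ref{Pro: Gaussian lower bound}), which is where both the $\frac{|x|^2}{1+t}$ and the $\log(1+t)$ enter.

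Your route, by contrast, picks up an extra drift $\frac{2}{\beta}\nabla f\cdot\nabla w$ from differentiating the nonlinear term $\frac{1}{\beta}|\nabla f|^2$; this drift grows like $|x|$ and you do not address it. It is not fatal—at a local maximum it vanishes, and for Grigor'yan one can absorb it into a weight proportional to $\bar\rho^{2}$—but it makes both auxiliaries you sketched messier. The $\bar\rho$-weighting is precisely the device that eliminates this unbounded drift and reduces everything to an operator with bounded coefficients, and pairing with $\bar\rho\log\bar\rho$ gives an exact coercive term rather than an approximate one. (Minor point: $\nabla\cdot(K\ast\bar\rho)=+\bar\rho$, not $-\bar\rho$; your $f$-equation has two compensating sign slips, so your conclusion about the favorable sign of $-\bar\rho w$ is still correct.)
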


\begin{theorem}[Logarithmic Hessian Estimate]\label{Thm:Logarithm Hessian estimate} Assume that the initial data $\bar{\rho}_0$ satisfies the same conditions as in Theorem \ref{Thm:Logarithm gradient estimate}, then we have the quadratic growth upper bound for the Hessian of $\log \bar{\rho}$, 
\begin{equation}\label{Thm:LHE-result}
|\nabla^2 \log \bar{\rho}| \leq \frac{C}{1+t}\Big(1 + \log(1+t) + \frac{|x|^2}{1+t}\Big)
\end{equation}
for some constant $C = C(\beta, C_0, \bar{\rho}) > 0$.
\end{theorem}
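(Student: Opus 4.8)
The plan is to prove Theorem~\ref{Thm:Logarithm Hessian estimate} by the same device as Theorem~\ref{Thm:Logarithm gradient estimate} (and as in Feng--Wang \cite{feng2023quantitative}): encode the desired bound into an auxiliary function that is a subsolution of a linear parabolic operator on $\R^2$, and conclude by the whole-space parabolic maximum principle used throughout this section, feeding in the already available ingredients — the gradient estimate, the decay Lemmas~\ref{alemma}--\ref{lemma2}, and the Gaussian upper and lower bounds. Write $f=\log\bar\rho$ and $u=K\ast\bar\rho$, so that $\mathrm{div}\,u=\bar\rho$ and
\[
\partial_t f=\tfrac1\beta\Delta f+\tfrac1\beta|\nabla f|^2-u\cdot\nabla f-\bar\rho .
\]
Since $\|\bar\rho_t\|_{L^\infty}\le\|\bar\rho_0\|_{L^\infty}=:e^\ell$, the quantity $\tilde f:=\ell-\log\bar\rho\ge0$, and by Lemma~\ref{Pro: Gaussian upper bound} together with the sharp Gaussian lower bound proved earlier in this section one has $\tfrac1c\big(1+\log(1+t)+\tfrac{|x|^2}{1+t}\big)-c\le\tilde f\le c\big(1+\log(1+t)+\tfrac{|x|^2}{1+t}\big)$. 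Because $\nabla^2 f=-\nabla^2\tilde f$, the claim \eqref{Thm:LHE-result} is equivalent to $(1+t)^2|\nabla^2\log\bar\rho|^2\lesssim(1+\tilde f)^2$.

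\textbf{The differential inequality for the Hessian.} Let $H:=\nabla^2 f$. Differentiating the equation for $f$ twice, using the Bochner-type identity $\nabla^2|\nabla f|^2=2H^2+2(\nabla f\cdot\nabla)H$ and Kato's inequality, one obtains
\[
\partial_t|H|^2\le\tfrac1\beta\Delta|H|^2-\tfrac2\beta|\nabla H|^2+\Big(\tfrac2\beta\nabla f-u\Big)\cdot\nabla|H|^2+\tfrac4\beta\,\mathrm{tr}(H^3)-2\bar\rho|H|^2+\mathcal R,
\]
where $\mathcal R$ collects the terms coming from $\nabla^2u$ acting on $\nabla f$, from $\nabla u$ acting on $H$, and from $\bar\rho\,\nabla f\otimes\nabla f$. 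Using $\|\nabla u\|_{L^\infty}\lesssim(1+t)^{-1}$, $\|\nabla^2u\|_{L^\infty}\lesssim(1+t)^{-3/2}$, $\|\bar\rho\|_{L^\infty}\lesssim(1+t)^{-1}$ (Lemmas~\ref{alemma} and~\ref{lemma2}, after the usual approximation by Schwartz data) and the gradient estimate $|\nabla f|\lesssim(1+t)^{-1/2}(1+\tilde f)^{1/2}$ of Theorem~\ref{Thm:Logarithm gradient estimate}, one gets $|\mathcal R|\lesssim(1+t)^{-2}(1+\tilde f)^{1/2}|H|+(1+t)^{-1}|H|^2+(1+t)^{-1}\bar\rho(1+\tilde f)|H|$; thus, after multiplying by $(1+t)^2$, every spatially growing term is either of the order of the barrier's own diffusion and transport, or carries the Gaussian weight of $\bar\rho$.

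\textbf{Auxiliary function and maximum principle.} Mirroring the gradient estimate — where $(1+t)|\nabla f|^2-a\tilde f-b$ is a subsolution of $\partial_t-\tfrac1\beta\Delta-(\tfrac2\beta\nabla f-u)\cdot\nabla$ with nonpositive initial datum — I would work with
\[
G:=(1+t)^2|\nabla^2\log\bar\rho|^2-A\,(1+\tilde f)^2,
\]
and exploit the identity $\partial_t(1+\tilde f)^2-\tfrac1\beta\Delta(1+\tilde f)^2-(\tfrac2\beta\nabla f-u)\cdot\nabla(1+\tilde f)^2=\tfrac2\beta\tilde f|\nabla f|^2+2(1+\tilde f)\bar\rho\ge0$, which follows from $\partial_t\tilde f-\tfrac1\beta\Delta\tilde f-(\tfrac2\beta\nabla f-u)\cdot\nabla\tilde f=\tfrac1\beta|\nabla f|^2+\bar\rho$. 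With $A=A(\beta,C_0,\bar\rho_0)$ large enough, $G$ should be a subsolution of that linear operator up to a nonpositive source, while $G|_{t=0}\le0$ is exactly the content of \eqref{Thm:LGE-ass1}--\eqref{Thm:LGE-ass2} once the Gaussian bounds are used. Then $G\le0$, i.e.\ \eqref{Thm:LHE-result}, follows from the whole-space parabolic maximum principle: the drift $\tfrac2\beta\nabla f-u$ has at most linear growth in $|x|$, and, thanks to the Gaussian lower bound, $|\nabla^2\log\bar\rho|^2$ and hence $G$ lie in the uniqueness class $e^{o(|x|^2)}$ on each of finitely many subintervals partitioning $[0,T]$.

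\textbf{Main obstacle.} The hard point, absent from the gradient estimate, is the genuinely superlinear term $\tfrac4\beta\mathrm{tr}(H^3)$: it is not absorbed by the diffusion, and a barrier at the predicted rate otherwise loses powers of $|x|$. The two-dimensional identity $\mathrm{tr}(H^3)=\tfrac12\Delta f\big(3|H|^2-(\Delta f)^2\big)$, i.e.\ $\tfrac4\beta\mathrm{tr}(H^3)=\tfrac6\beta\Delta f\,|H|^2-\tfrac2\beta(\Delta f)^3$, rewrites it through $\Delta\log\bar\rho$ alone, so the strategy would be to control $\Delta\log\bar\rho$ — via the (coupled) equation $\partial_t\Delta f=\tfrac1\beta\Delta(\Delta f)+\tfrac2\beta|H|^2+\tfrac2\beta\nabla f\cdot\nabla\Delta f-\Delta(u\cdot\nabla f)-\Delta\bar\rho$ and the Gaussian bounds — showing that in the regime where $G$ is nearly maximal $\Delta\log\bar\rho$ is negative and $O((1+t)^{-1})$, which turns the first piece into a favourable potential and gives the second the right sign. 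Closing this is expected to need a continuity/bootstrap argument, running the maximum principle on the a priori region $\{(1+t)|\nabla^2\log\bar\rho|\le2C(1+\tilde f)\}$ and improving the constant $2C$ to $C$. Making the signs and sizes in this identity genuinely cooperate, without degrading the $|x|$-growth, is the delicate point — precisely the source, as in \cite{feng2023quantitative}, of the time rate in Theorem~\ref{Thm:EPoC} being only $e^{\frac C\varepsilon t^\varepsilon}$ rather than polynomial.
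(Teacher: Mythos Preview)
Your proposal has a genuine gap, and the paper's proof sidesteps it by a different choice of quantity. You work directly with $|H|^2=|\nabla^2\log\bar\rho|^2$, and as you yourself identify, this produces the cubic term $\tfrac{4}{\beta}\mathrm{tr}(H^3)$ in the evolution. Your resolution of this term is only a sketch: the two-dimensional identity $\mathrm{tr}(H^3)=\tfrac12\Delta f\,(3|H|^2-(\Delta f)^2)$ is correct, but the subsequent plan --- controlling $\Delta\log\bar\rho$ to be negative and $O((1+t)^{-1})$ ``in the regime where $G$ is nearly maximal'', then running a continuity/bootstrap on a sublevel set --- is neither carried out nor clearly closeable. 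In particular, $\Delta\log\bar\rho$ need not have a sign, and the bootstrap you describe is not a standard maximum-principle argument; you would have to justify why the a priori region is preserved and why the improvement $2C\to C$ actually occurs.

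The paper avoids the cubic obstruction entirely by the Hamilton-type trick (already in \cite{feng2023quantitative}): instead of propagating $|\nabla^2\log\bar\rho|^2$, it propagates $\dfrac{|\nabla^2\bar\rho|^2}{\bar\rho}$. The Bochner computation for this quantity yields a good term $-\tfrac{2}{\bar\rho}\big|\nabla^3\bar\rho-\tfrac{\nabla\bar\rho\otimes\nabla^2\bar\rho}{\bar\rho}\big|^2$ and \emph{no} cubic $H^3$ term; the remaining error terms are all controlled by $\tfrac{C}{1+t}\tfrac{|\nabla^2\bar\rho|^2}{\bar\rho}+\tfrac{C'}{(1+t)^2}\tfrac{|\nabla\bar\rho|^2}{\bar\rho}$ using only the decay of $\nabla^k(g\ast\bar\rho)$. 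The auxiliary function is then a linear combination of $\tfrac{|\nabla^2\bar\rho|^2}{\bar\rho}$, $\tfrac{|\nabla\bar\rho|^2}{\bar\rho}$, $\bar\rho(\log\bar\rho)^2$, $\bar\rho\log\bar\rho$ and $\bar\rho$, with time-dependent coefficients (constant for short time, $\phi\sim t^2$ and $\varphi\sim t$ for long time). The crucial cancellation of the bad term $2\phi|\nabla\log\bar\rho|^4/\bar\rho$ coming from the first inequality of Lemma~\ref{Lemma:LHE} uses the already-proved bound $t|\nabla\log\bar\rho|^2\le C-C\log\bar\rho$ from Theorem~\ref{Thm:Logarithm gradient estimate}, which feeds directly into the $\bar\rho(\log\bar\rho)^2$ barrier. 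At the end one recovers $|\nabla^2\log\bar\rho|$ from $\tfrac{|\nabla^2\bar\rho|}{\bar\rho}$ via $\nabla^2\log\bar\rho=\tfrac{\nabla^2\bar\rho}{\bar\rho}-\nabla\log\bar\rho\otimes\nabla\log\bar\rho$ together with the gradient estimate. This route is purely linear in the top-order quantity and needs no bootstrap.
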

\begin{remark}\label{Rmk:log-type estimates}
The same results are valid for 2D vortex model, 2D Patlak-Keller-Segel sytem and Coulombian system with higer dimension on the whole space for $t \in [0, T]$, once we verify the corresponding solution $\bar{\rho} \in ([0,T], W^{2,\infty}(\R^d))$.  
\end{remark}

\begin{remark}\label{Rmk: optimal log estimates}
The optimal time decay estimates of $|\nabla \log \bar{\rho}|$ and $|\nabla^2 \log \bar{\rho}|$ should be
\begin{equation*}
|\nabla \log \bar{\rho}|^2 \leq \frac{C}{1+t}\Big(1 + \frac{|x|^2}{1+t}\Big),
\end{equation*}
and
\begin{equation*}
|\nabla^2 \log \bar{\rho}| \leq \frac{C}{1+t}\Big(1 + \frac{|x|^2}{1+t}\Big).
\end{equation*}
It seems like that the method of maxmum principle cannot reach this optimal results. This suboptimal time decay results lead to suboptimal time growth esitmates in Theorem \ref{Thm:EPoC}.
\end{remark}

\subsection{Maximum Principle}

The main content of this subsection is the general version of maximum principle in \cite{grigor2006heat,grigoryan2009heat}
for the limit equation \eqref{Eq:limiting equation} on the weighted manifold. We denote $\Delta_f$ as the weighted Laplacian operator on $\R^d$ defined by $\Delta_f = \frac{1}{\beta}\Delta + \langle \nabla f, \nabla \rangle$, where $f = g \ast \bar{\rho}$. In fact, according to Section \ref{wellposedness}, given the initial data $\bar{\rho}_0 \in L^1$, then the solution $\bar{\rho}$ and the velocity field $u = \nabla g \ast \bar{\rho}$ are smooth. Hence the weighted Laplacian operator $\Delta_f$ has smooth coefficients.

Now let us describe the extended version of maximum principle in \cite{grigoryan2009heat} for the limit equation \eqref{Eq:limiting equation}, which is from \cite[Theorem 11.9]{grigoryan2009heat} for weighted heat equation and has been used to derive the propagation of chaos for 2D viscous vortex model on the whole space by Feng and Wang in \cite{feng2023quantitative}.

\begin{theorem}\label{Thm: Max-pri}
(Grigor'yan, an extended version). Let $(M,\tilde{g}, \ud \mu = e^f\ud V)$ be a complete weighted manifold, $q \in C
([0,T], W^{2,\infty}(\R^d))$, and let $F(x,t)$ be a solution of
\begin{equation}\label{Eq:weighted heat equation}
\partial_t F + qF \leq \Delta_f F \ \ \text{in} \ \ M \times (0,T], \ \ \ F(\cdot, 0) \leq 0.
\end{equation}
Assume that for some $x_0 \in M$ and for all $r > 0$,
\begin{equation}\label{MP:condition 1}
\int_0^T \int_{B(x_0,r)} F_{+}^2(x,t) \ud \mu \ud t \leq e^{\alpha(r)}
\end{equation}
foe some $\alpha(r)$ positive increasing function on $(0,\infty)$ such that
\begin{equation}\label{MP:condition 2}
\int_0^{\infty} \frac{r}{\alpha(r)} \ud r = \infty, 
\end{equation}
then $F \leq 0$ on $M \times [0,T]$.
\end{theorem}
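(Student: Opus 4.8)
The plan is to deduce this statement from Grigor'yan's maximum principle \cite[Theorem 11.9]{grigoryan2009heat} for the weighted heat inequality, after first removing the zeroth-order term $qF$ by an exponential-in-time gauge transform. Since $q \in C([0,T], W^{2,\infty}(\R^d)) \subset C([0,T], L^{\infty}(\R^d))$, set $\Lambda := \sup_{t \in [0,T]}\|q(\cdot,t)\|_{L^{\infty}}$ and put $\tilde F := e^{-\Lambda t}F$. Then $\tilde F$ satisfies $\partial_t \tilde F + (q+\Lambda)\tilde F \le \Delta_f \tilde F$ on $M \times (0,T]$ with $q+\Lambda \ge 0$ and $\tilde F(\cdot,0) \le 0$, and because $0 < e^{-\Lambda t} \le 1$ the integral bound \eqref{MP:condition 1} is inherited by $\tilde F_{+} \le F_{+}$; moreover $F \le 0$ on $M \times [0,T]$ if and only if $\tilde F \le 0$ there. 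So we may assume $q \ge 0$.

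Next I would pass to the positive part $v := \tilde F_{+}$. On the open set $\{\tilde F > 0\}$ one has $\partial_t \tilde F - \Delta_f \tilde F \le -(q+\Lambda)\tilde F \le 0$, so $\tilde F$ is a subsolution of the genuinely weighted heat inequality there; by Kato's inequality $v$ is then a non-negative weak subsolution of $\partial_t v \le \Delta_f v$ on $M \times (0,T]$ with $v(\cdot,0) = 0$ — the zeroth-order term has simply disappeared with a favorable sign. The operator $\Delta_f = \tfrac1\beta\Delta + \langle \nabla f, \nabla\rangle$ with $f = g \ast \bar\rho$ has smooth coefficients by the regularity of $\bar\rho$ recorded in Section \ref{wellposedness}, and the underlying weighted manifold (a constant rescaling of Euclidean $\R^d$ with a smooth positive density) is complete; thus $v$ is exactly in the scope of Grigor'yan's theorem, whose hypotheses are precisely \eqref{MP:condition 1}--\eqref{MP:condition 2}. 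It follows that $v \equiv 0$ on $M \times [0,T]$, i.e.\ $\tilde F \le 0$, i.e.\ $F \le 0$.

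For completeness I would reproduce the mechanism behind Grigor'yan's theorem, since it is what makes the condition \eqref{MP:condition 2} enter. Fix $0 < r < R$ and a cutoff $\eta$ with $\eta \equiv 1$ on $B(x_0,r)$, $\operatorname{supp}\eta \subset B(x_0,R)$ and $|\nabla\eta| \le 2/(R-r)$; testing $\partial_t v \le \Delta_f v$ against $v\eta^2$ and using Young's inequality to absorb the gradient cross term gives the Caccioppoli-type estimate $\tfrac{d}{dt}\int_M v^2\eta^2\,\ud\mu \le \tfrac{C}{(R-r)^2}\int_{B(x_0,R)} v^2\,\ud\mu$ with $C = C(\beta)$ independent of the weight. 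Integrating in time from $0$ (where $v=0$) and once more over $[0,T]$ yields, with $E(r) := \int_0^T\!\!\int_{B(x_0,r)} v^2\,\ud\mu\,\ud t$, the recursion $E(r) \le \tfrac{CT}{(R-r)^2}\,E(R)$. Iterating along an increasing sequence $r = \rho_0 < \rho_1 < \cdots < \rho_n$ and inserting the growth bound $E(\rho_n) \le e^{\alpha(\rho_n)}$ gives
\begin{equation*}
E(\rho_0) \le \Big(\prod_{k=0}^{n-1}\frac{CT}{(\rho_{k+1}-\rho_k)^2}\Big) e^{\alpha(\rho_n)}.
\end{equation*}
One then chooses the gaps $\rho_{k+1}-\rho_k$ — using precisely $\int_0^\infty \tfrac{r}{\alpha(r)}\,\ud r = \infty$ — so that $\rho_n \to \infty$ while the right-hand side tends to $0$, forcing $E(\rho_0) = 0$; since $\rho_0$ was arbitrary, $v \equiv 0$.

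The main obstacle, and the only genuinely non-routine point, is this last optimization of the radii against the prescribed growth $\alpha$: it is the T\"acklind/Tikhonov-type uniqueness mechanism and is the real content of \cite[Theorem 11.9]{grigoryan2009heat}. Everything preceding it — the exponential gauge transform, Kato's inequality, and the Caccioppoli estimate — is routine once the smoothness of the coefficients of $\Delta_f$ from Section \ref{wellposedness} is available. A minor technical point is the legitimacy of the energy identities for $F$; here $F$ solves a parabolic inequality with smooth coefficients, so local parabolic regularity applies and, unlike in the entropy-solution setting of Section \ref{A priori regularity estimates}, no separate approximation argument is needed.
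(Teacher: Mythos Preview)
Your reduction---the gauge transform $\tilde F = e^{-\Lambda t}F$ to make the zeroth-order coefficient nonnegative, then passing to the positive part and invoking Grigor'yan's Theorem 11.9 as a black box---is correct and is exactly what the paper does (they write $F_1 = e^{-ct}F$, verify that the key energy Claim survives the extra nonnegative term $qF$ when one multiplies by $F\varphi^2 e^{\xi}$, and then take $x_+$).

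One caveat on your ``for completeness'' sketch of Grigor'yan's mechanism: testing only against $v\eta^2$ with a purely spatial cutoff gives the recursion $E(r) \le \tfrac{CT}{(R-r)^2}E(R)$, and no choice of gaps makes the iterated product $\prod_k \tfrac{CT}{(\rho_{k+1}-\rho_k)^2}$ beat $e^{\alpha(\rho_n)}$ once $\alpha(r)\sim r^2$---which is precisely the growth that arises in the applications (Theorems~\ref{Thm:Logarithm gradient estimate} and~\ref{Thm:Logarithm Hessian estimate}). Grigor'yan's actual argument, which the paper follows, tests against $F\varphi^2 e^{\xi}$ with a \emph{time-dependent Gaussian weight} $\xi$; this is what produces the sharper claim $\int_{B_R}F^2(b,\cdot)\,\ud\mu \le \int_{B_{4R}}F^2(a,\cdot)\,\ud\mu + CR^{-4}$ under the constraint $|a-b|\le R^2/(8\alpha(4R))$, and the divergence $\int_0^\infty r/\alpha(r)\,\ud r=\infty$ is then used to iterate in \emph{time} back to $t=0$, not merely in radius. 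Since you ultimately cite the black box, your proof stands, but the sketch misrepresents where condition~\eqref{MP:condition 2} actually enters.
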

\begin{proof}
We follow the same idea of \cite[Theorem 11.9]{grigoryan2009heat}. The key point is that the \textbf{Claim} in Theorem 11.9 remains true for the following equation (with nonnegative function $q$) $$\partial_t F +qF=\Delta_f F \ \ \text{in} \ \ M \times (0,T], \ \ \ F(\cdot, 0) = 0  $$

\begin{claim}
For any $R > 0$ satisfying the condition
\begin{equation*}
a - b \leq \frac{R^2}{8\alpha(4R)}, 
\end{equation*}
the following inequality holds:
\begin{equation*}
\int_{B_R} F^2(b, \cdot) \ud \mu \leq \int_{B_{4R}} F^2(a, \cdot) \ud \mu + \frac{C}{R^4}.
\end{equation*}
\end{claim}

\begin{proof}[Proof of \textbf{Claim 1}] 
We multiply the Equation $ \partial_t F + qF = \Delta_f F$ by test function $F \varphi^2 e^{\xi}$ ($\varphi$ and $\xi$ are exactly the same as defined in \cite{grigoryan2009heat}) and integrating it over $[b,a] \times M$, we obtain
\begin{equation*}
\begin{split}
\int F^2 \varphi^2 e^{\xi} \ud \mu & = \int_b^a \int \frac{\partial \xi}{\partial t} F^2 \varphi^2 e^{\xi} \ud \mu \ud t + 2 \int_b^a \int (\Delta_f F - qF) F \varphi^2 e^{\xi} \ud \mu \ud t \\ & \leq C \int_b^a \int |\nabla \varphi|^2 F^2 e^{\xi} \ud \mu \ud t,
\end{split}
\end{equation*}
then we finish the proof by the same argument in \cite[Theorem 11.9]{grigoryan2009heat}.
\end{proof}
By the Claim above and the same procedure of 
\cite[Theorem 11.9]{grigoryan2009heat}, we obtain that $F = 0 $ on $M \times (0,T]$ under the same conditions of Theorem 11.9 in \cite{grigoryan2009heat} for $q \in C([0,T], W^{2,\infty}(\R^d))$ and nonnegative, i.e.
\begin{equation*}
\partial_t F + qF = \Delta_f F \ \ \text{in} \ \ M \times (0,T], \ \ \ F(\cdot, 0) = 0 \ \ \ \Longrightarrow \ \ \ F = 0 \ \text{on} \ M \times (0,T].
\end{equation*}
For the general $q \in C
([0,T], W^{2,\infty}(\R^d))$, denote
$c=\sup \limits_{(x,t) \in M \times [0,T]} |q(x,t)|$. Let $F_1=e^{-ct}F$, then we have 
\begin{equation*}
\partial_t F_1+(q+c)F_1=\Delta_f F_1,\quad F(\cdot, 0) = 0.
\end{equation*}
Using the result for $q+c \in C
([0,T], W^{2,\infty}(\R^d))$ which is nonnegative, we know that $F_1 = 0 \ \text{on} \ M \times (0,T]$ and thus $F = 0 \ \text{on} \ M \times (0,T]$.
Moreover, since $\phi(x) = x_{+}$ is non-decreasing, convex, continuous and piecewise smooth, the conclusion above yields that
\begin{equation*}
\partial_t F_+ + qF_+ \leq \Delta_f F_+ \ \ \text{in} \ \ M \times (0,T], \ \ \ F_+(\cdot, 0) = 0 \ \ \ \Longrightarrow \ \ \ F_+ \leq 0 \ \text{on} \ M \times (0,T]
\end{equation*}
under the same conditions of Theorem \ref{Thm:Logarithm gradient estimate}. Hence we finish the proof.
\end{proof}

\subsection{Gaussian Lower Bound}

In this subsection, we apply the maximum principle above to provide the Gaussian lower bound for our limit equation \eqref{Eq:limiting equation}. We mention that compared to the Gaussian lower bound for 2D Navier-Stokes equation in the previous work \cite{feng2023quantitative}, which is obtained by classical Li-Yau estimates and parabolic Harnack inequality, our current estimate is optimal in the sense of the time dependence in the exponential, say $\exp(-|x|^2/t)$ instead of $\exp(-t|x|^2)$. This plays a key role for the improvement in our main result relative entropy estimate with respect to the growth rate of time $t$.

\begin{lemma}[Gaussian Lower Bound]\label{Pro: Gaussian lower bound}
Suppose that the initial data $\bar{\rho}_0$ satisfies the logarithmic gradient estimate as in our main result
\begin{equation*}
    |\nabla \log\bar\rho_0(x)| \lesssim 1+|x|.
\end{equation*}
Then we have the following Gaussian lower bound for the solution to the limit equation \eqref{Eq:Cauchy problem} for any time $t$,
\begin{equation*}
\bar{\rho}(t,x) \geq \frac{1}{C(1+t)^C} \exp\Big(-C\frac{|x|^2}{1+t}\Big)
\end{equation*}
for some $C> 0$.
\end{lemma}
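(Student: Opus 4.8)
The plan is to run the parabolic maximum principle of Theorem~\ref{Thm: Max-pri} (with $q\equiv 0$), comparing a suitable time‑rescaling of $\bar\rho$ from below against an explicit Gaussian barrier. First I would record the pointwise equation for $\bar\rho$: since $\mathrm{div}\,K=-\Delta g=\delta_0$ one has $\mathrm{div}(K\ast\bar\rho)=\bar\rho$, so with $f=g\ast\bar\rho$ the Cauchy problem \eqref{Eq:Cauchy problem} becomes
\begin{equation*}
\partial_t\bar\rho=\Delta_f\bar\rho-\bar\rho^2,\qquad \Delta_f=\tfrac1\beta\Delta+\langle\nabla f,\nabla\rangle .
\end{equation*}
Because $0\le\bar\rho^2\le\|\bar\rho_t\|_{L^\infty}\bar\rho\le\frac{C_\rho\beta}{1+t}\bar\rho$ by Lemma~\ref{alemma}, the rescaled density $\tilde\rho_t:=(1+t)^{C_\rho\beta}\bar\rho_t$ satisfies $\partial_t\tilde\rho-\Delta_f\tilde\rho=(1+t)^{C_\rho\beta}\bar\rho\big(\tfrac{C_\rho\beta}{1+t}-\bar\rho\big)\ge0$ on $\R^2\times(0,T]$; i.e.\ it is a supersolution of the weighted heat operator.

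Next I would turn the hypothesis into a Gaussian lower bound at $t=0$: integrating $|\nabla\log\bar\rho_0(x)|\lesssim1+|x|$ along the segment from $0$ to $x$ gives $\log\bar\rho_0(x)\ge\log\bar\rho_0(0)-C(|x|+|x|^2)\ge-C'(1+|x|^2)$, hence $\bar\rho_0(x)\ge c_0e^{-c_0^{-1}|x|^2}$ for some $c_0>0$. I would then look for a barrier of the form $\Psi(x,t)=A(1+t)^{-\gamma}\exp\!\big(-\lambda|x|^2/(1+t)\big)$ and verify that it is a subsolution, $\partial_t\Psi-\Delta_f\Psi\le0$. Writing $\psi=\log\Psi$, a direct computation gives
\begin{equation*}
\frac{\partial_t\Psi-\Delta_f\Psi}{\Psi}=\Big(\lambda-\tfrac{4\lambda^2}{\beta}\Big)\frac{|x|^2}{(1+t)^2}+\Big(\tfrac{4\lambda}{\beta}-\gamma\Big)\frac1{1+t}-\langle\nabla f,\nabla\psi\rangle ,
\end{equation*}
and $|\langle\nabla f,\nabla\psi\rangle|\le\|\nabla f_t\|_{L^\infty}\frac{2\lambda|x|}{1+t}\lesssim\frac{\lambda|x|}{(1+t)^{3/2}}$ by Lemma~\ref{alemma} (note $\nabla f_t=-K\ast\bar\rho_t$). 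Choosing $\lambda$ large enough that $\lambda-4\lambda^2/\beta<0$ and also $\lambda\ge c_0^{-1}$, absorbing the drift term into this negative quadratic term by Young's inequality at the price of an extra $O\big((1+t)^{-1}\big)$ contribution, and finally taking $\gamma$ large and $A\le c_0$, makes the right-hand side nonpositive while ensuring $\Psi(\cdot,0)=Ae^{-\lambda|x|^2}\le\bar\rho_0$.

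With such $\Psi$, the function $F:=\Psi-\tilde\rho$ solves $\partial_tF\le\Delta_fF$ on $\R^2\times(0,T]$ with $F(\cdot,0)\le0$, and, crucially, $F_+\le\Psi\le A$ is bounded since $\tilde\rho\ge0$. As the weight $\ud\mu=e^f\ud V$ has polynomial volume growth (the potential $f=g\ast\bar\rho$ behaves like $-\tfrac1{2\pi}\log|x|$ at infinity, so $e^f$ is bounded), the hypothesis \eqref{MP:condition 1} holds with $\alpha(r)=C(1+r^2)$, which satisfies \eqref{MP:condition 2}. Theorem~\ref{Thm: Max-pri} then yields $F\le0$ on $\R^2\times[0,T]$, i.e.\ $\tilde\rho\ge\Psi$; since the constants $A,\gamma,\lambda,C_\rho$ do not depend on $T$, letting $T\to\infty$ and unwinding the rescaling gives
\begin{equation*}
\bar\rho(x,t)\ge\frac{A}{(1+t)^{\gamma+C_\rho\beta}}\exp\!\Big(-\frac{\lambda|x|^2}{1+t}\Big),
\end{equation*}
which is the claimed bound. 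I expect the main obstacle to be the barrier construction of the middle step — juggling $\lambda,\gamma,A$ so that the quadratic‑in‑$x$ coefficient is negative, absorbs the linearly growing drift $\langle\nabla f,\nabla\Psi\rangle$ (the feature that makes this harder than for the bare heat equation), and still keeps $\Psi(\cdot,0)$ below $\bar\rho_0$. By comparison, verifying the integrability hypothesis of the maximum principle is routine once one observes that $F_+$ is bounded.
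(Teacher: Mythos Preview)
Your approach is correct and essentially the same as the paper's: both construct a Gaussian barrier below $\bar\rho$ and apply the extended maximum principle Theorem~\ref{Thm: Max-pri}, using the decay $\|K\ast\bar\rho_t\|_{L^\infty}\lesssim(1+t)^{-1/2}$ to absorb the drift term via Young's inequality and the logarithmic‑gradient hypothesis to get the Gaussian lower bound at $t=0$. The only cosmetic differences are that the paper keeps the zeroth‑order term $q=\bar\rho$ in the operator (rather than removing it by your preliminary rescaling $\tilde\rho=(1+t)^{C_\rho\beta}\bar\rho$) and determines the barrier $\Gamma=e^{\delta(t)+\lambda(t)|x|^2}$ by solving the ODEs $\lambda'=\tfrac{4-\varepsilon}{\beta}\lambda^2$ and $\delta'=\tfrac{4}{\beta}\lambda-\tfrac{C_\varepsilon}{1+t}$ instead of writing down the ansatz $A(1+t)^{-\gamma}e^{-\lambda|x|^2/(1+t)}$ with fixed constants as you do.
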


\begin{proof}
We set two functions $\delta(t), \lambda(t) \in C^2([0, +\infty))$ to be confirmed and denote  
\begin{equation*}
\Gamma(t,x) = e^{\delta(t) + \lambda(t)|x|^2}
\end{equation*}
to be some Gaussian test function. We take the auxiliary function to be propagated as $F(t,x) = \Gamma(t,x) - \bar{\rho}(t,x)$. Now we compute directly that
\begin{equation*}
\begin{aligned}
(\partial_t + q - \Delta_f) \Gamma(t,x) = \big\{ \delta'(t) + \lambda'(t)|x|^2 + q - \frac{4}{\beta}\lambda(t) - \frac{4}{\beta} \lambda^2(t)|x|^2 + 2K \ast \bar{\rho} \cdot x \lambda(t) \big \} \Gamma(t,x).
\end{aligned}
\end{equation*}
Observing that $\Gamma(t,x) > 0$ and using the Young's inequality, we have
\begin{equation*}
\begin{aligned}
& (\partial_t + q - \Delta_f) \Gamma(t,x) \\ \leq & \Big\{ \delta'(t) + q - \frac{4}{\beta} \lambda(t) + C_{\varepsilon} \| K \ast \bar\rho \|^2_{L^{\infty}} + \Big(\lambda'(t) - \frac{4 - \varepsilon}{\beta} \lambda^2(t)\Big)|x|^2 \Big \} \Gamma(t,x) \\ \leq & \Big\{ \delta'(t) - \frac{4}{\beta} \lambda(t) + \|q\|_{L^{\infty}} + C_{\varepsilon} \| K \ast \bar{\rho} \|^2_{L^{\infty}} + \Big(\lambda'(t) - \frac{4 - \varepsilon}{\beta} \lambda^2(t)\Big)|x|^2 \Big \} \Gamma(t,x),  
\end{aligned}
\end{equation*}
for some small $\varepsilon > 0$.
Recalling we take $q = \bar{\rho}$ in our limit equation \eqref{Eq:limiting equation} and using the long time asymptotic behavior of $\bar{\rho}$ and $u = K \ast \bar{\rho}$ in Lemma \ref{alemma}, we have
\begin{equation*}
\begin{aligned}
(\partial_t + q - \Delta_f) \Gamma(t,x) \leq \Big\{ \delta'(t) - \frac{4}{\beta} \lambda(t) +\frac{C_\varepsilon}{1+t} + \Big(\lambda'(t) - \frac{4 - \varepsilon}{\beta} \lambda^2(t)\Big)|x|^2 \Big \} \Gamma(t,x). 
\end{aligned}
\end{equation*}
Now we firstly select the function $\lambda(t)$ to cancel the quadratic term $|x|^2$, say satisfying that 
\begin{equation*}
\lambda'(t) - \frac{4 - \varepsilon}{\beta} \lambda^2(t) = 0,
\end{equation*}
we have
\begin{equation*}
\lambda(t) = \frac{\lambda(0)}{1 - \frac{4 - \varepsilon}{\beta} \lambda(0) t}.
\end{equation*}
Next we choose $\delta(t)$ such that
\begin{equation*}
\begin{aligned}
\delta'(t) - 4 \lambda(t) + \frac{C_{\varepsilon}}{1+t} = 0
\end{aligned}
\end{equation*}
for some $C_{\varepsilon} > 0$. Then we have 
\begin{equation*}
\delta(t) = \delta(0) - C_{\varepsilon} - \frac{4\beta}{(4 - \varepsilon)} \log \Big(1 - \frac{4 - \varepsilon}{\beta} \lambda(0) t\Big) - C_{\varepsilon} \log (1+t).
\end{equation*}
In order to satisfy the initial condition for our maximum principle, i.e. $\Gamma(0,\cdot) \leq \bar\rho_0$, we take
\begin{equation*}
\lambda(0) = -C'', \ \ \ \delta(0) = -C'', 
\end{equation*}
hence we obtain 
\begin{equation*}
\begin{aligned}
\Gamma(t,x) = e^{-C_{\varepsilon}-C''} (1+t)^{-C_{\varepsilon}}\Big(1 + \frac{4 - \varepsilon}{\beta}C''t\Big)^{-\frac{4}{4 - \varepsilon}} \exp\Big(-\frac{C''|x|^2}{1 + C''\frac{4 - \varepsilon}{\beta}t}\Big).
\end{aligned}
\end{equation*}
Finally we conclude that
\begin{equation*}
(\partial_t + q - \Delta_f)(\Gamma(t,x) - \bar{\rho}) \leq 0 
\end{equation*}
and $\Gamma(0,\cdot) \leq \bar{\rho}_0$ by the above construction, and 
\begin{equation*}
\int_0^T \int_{B(x_0, r)}(\Gamma - \bar{\rho})^2 e^f \ud x \ud t \leq CTr^2
\end{equation*}
by the boundedness of $\Gamma(t,x), \bar{\rho}$ and $f$. Now we take $\alpha(r) = \log r$ and use the maximum principle Theorem \ref{Thm: Max-pri} to conclude that
\begin{equation*}
\begin{aligned}
\bar{\rho} \geq \Gamma(t,x) \geq e^{-C_{\varepsilon} -C''}\Big(1 + (4C'' + 1)t\Big)^{-2-C_{\varepsilon}} e^{-\frac{\beta|x|^2}{(4-\varepsilon) t+1/C''}},
\end{aligned}
\end{equation*}
for any small $\varepsilon < 2$, then we finish the proof.
\end{proof}

\subsection{Logarithmic Gradient Estimate}

Now we turn to the proof of our main logarithmic growth estimates. The basic idea is applying the Bernstein's method, say constructing some auxiliary functions containing some quantities to be estimated, then propagating the auxiliary functions along the parabolic operator and adapting the maximum principle. This will give us some bounds on the auxiliary functions, and thus on the desired quantities. In order to construct the auxiliary function $F(x,t)$ for our setting, we need to prepare some elementary calculations first.
\begin{lemma}\label{Lemma:LGE}
Assume that $\bar{\rho}$ solves the limit equation (\ref{Eq:limiting equation}) and recall that $u = -\nabla f = K \ast \bar{\rho}, q = div K \ast \bar{\rho} = \bar{\rho} \geq 0$. We also assume that $q \in C^1([0,T],W^{1,\infty}(\R^2))$. Then we have
\begin{equation}\label{Lemma:LGE-1}
\begin{split}
(\partial_t + q - \Delta_f )\Big(\frac{|\nabla \bar{\rho}|^2}{\bar{\rho}}\Big) & = - \frac{2}{\rho} \Big|\nabla_i \nabla_j \bar{\rho} - \frac{\nabla_i \bar{\rho} \nabla_j \bar{\rho}}{\bar{\rho}}\Big|^2 - \frac{2}{\bar{\rho}} \nabla \bar{\rho} \cdot \nabla (K \ast \bar{\rho}) \cdot \nabla \bar{\rho} - 2 \nabla \bar{\rho} \cdot \nabla q \\ & \leq \frac{C}{1+t} \frac{|\nabla \bar{\rho}|^2}{\bar{\rho}},
\end{split}
\end{equation}

\begin{equation}\label{Lemma:LGE-2}
\begin{split}
( \partial_t + q - \Delta_f )(\bar{\rho} \log \bar{\rho}) = - \frac{|\nabla \bar{\rho}|^2}{\bar{\rho}} - q \bar{\rho} \leq - \frac{|\nabla \bar{\rho}|^2}{\bar{\rho}}.
\end{split}
\end{equation}

\end{lemma}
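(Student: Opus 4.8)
The plan is to derive both identities by direct computation using the equation $\partial_t \bar\rho = \frac{1}{\beta}\Delta\bar\rho - \mathrm{div}(\bar\rho\, K\ast\bar\rho) = \frac{1}{\beta}\Delta\bar\rho - \bar\rho\, K\ast\nabla\bar\rho - (\mathrm{div}\, K\ast\bar\rho)\bar\rho$, and then to bound the right-hand sides using the decay estimates from Lemma \ref{alemma} and the Gaussian bounds. Recall $\Delta_f = \frac{1}{\beta}\Delta + \langle\nabla f,\nabla\rangle$ with $f = g\ast\bar\rho$, so $\langle\nabla f,\nabla\rangle = \langle -u,\nabla\rangle = -\langle K\ast\bar\rho,\nabla\rangle$; the operator $\partial_t + q - \Delta_f$ applied to any smooth $\Phi$ is $\partial_t\Phi + q\Phi - \frac{1}{\beta}\Delta\Phi + \langle K\ast\bar\rho,\nabla\Phi\rangle$. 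The key observation making the second identity \eqref{Lemma:LGE-2} easy is that $\bar\rho$ itself satisfies $(\partial_t + q - \Delta_f)\bar\rho = 0$: indeed $\partial_t\bar\rho + q\bar\rho - \frac{1}{\beta}\Delta\bar\rho + \langle K\ast\bar\rho,\nabla\bar\rho\rangle = 0$ is precisely the limit equation rewritten in the form above (with $q = \mathrm{div}\, K\ast\bar\rho$). So the calculation for \eqref{Lemma:LGE-2} reduces to applying the operator to $\phi(\bar\rho)$ with $\phi(s) = s\log s$ and using the chain rule for the Laplacian: $\Delta\phi(\bar\rho) = \phi'(\bar\rho)\Delta\bar\rho + \phi''(\bar\rho)|\nabla\bar\rho|^2$, which produces exactly the extra $-\frac{1}{\beta}\phi''(\bar\rho)|\nabla\bar\rho|^2 = -\frac{|\nabla\bar\rho|^2}{\beta\bar\rho}$ term; the remaining terms cancel against the $q\bar\rho\log\bar\rho$ and transport contributions up to the $-q\bar\rho$ term. (One should track the factor $\beta$ carefully — the statement as written suppresses it or absorbs it, and I would present the computation with $\beta$ explicit and then note the normalization.)

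For \eqref{Lemma:LGE-1} the computation is the standard Bochner-type identity for $|\nabla\bar\rho|^2/\bar\rho$. I would set $w = |\nabla\bar\rho|^2/\bar\rho$ and compute $(\partial_t + q - \Delta_f)w$ term by term: differentiating $|\nabla\bar\rho|^2$ in time brings in $2\nabla\bar\rho\cdot\nabla(\partial_t\bar\rho)$, and substituting the equation, then commuting $\nabla$ with $\Delta$ and with the transport term $\langle K\ast\bar\rho,\nabla\cdot\rangle$, produces the Hessian-squared term, the term $\nabla\bar\rho\cdot\nabla(K\ast\bar\rho)\cdot\nabla\bar\rho$ from differentiating the drift, and the $\nabla\bar\rho\cdot\nabla q$ term from differentiating $q\bar\rho$. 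The division by $\bar\rho$ is handled by the identity $(\partial_t+q-\Delta_f)(\bar\rho\, v) = \bar\rho(\partial_t-\Delta_f)v - \frac{2}{\beta}\nabla\bar\rho\cdot\nabla v + q\bar\rho v$ valid since $(\partial_t+q-\Delta_f)\bar\rho = 0$ — equivalently one can just write $w = G/\bar\rho$ with $G = |\nabla\bar\rho|^2$ and use the quotient rule for the diffusion operator. Completing the square on the second-order terms gives the clean form $-\frac{2}{\bar\rho}\big|\nabla^2\bar\rho - \frac{\nabla\bar\rho\otimes\nabla\bar\rho}{\bar\rho}\big|^2$, which is the gradient-squared of $\log\bar\rho$ rescaled; this is the manifestation of $\nabla^2\log\bar\rho = \nabla^2\bar\rho/\bar\rho - \nabla\bar\rho\otimes\nabla\bar\rho/\bar\rho^2$.

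The inequality in \eqref{Lemma:LGE-1} then follows by discarding the (nonpositive) Hessian term and estimating the remaining two: $-\frac{2}{\bar\rho}\nabla\bar\rho\cdot\nabla(K\ast\bar\rho)\cdot\nabla\bar\rho \le 2\|\nabla(K\ast\bar\rho)\|_{L^\infty} w$, and $\|\nabla K\ast\bar\rho\|_{L^\infty} = \|\mathrm{div}\, K\ast\bar\rho\|_{L^\infty}$-type and full-Jacobian bounds decay like $(1+t)^{-1}$ by Lemma \ref{alemma} (since $\nabla(K\ast\bar\rho) = K\ast\nabla\bar\rho$ and one bounds this by $\|\bar\rho\|_{L^1}^{1/2}\|\nabla\bar\rho\|_{L^\infty}^{1/2}$ or directly, using $\|\bar\rho_t\|_{L^\infty}, \|\nabla\bar\rho_t\|_{L^\infty}\lesssim (1+t)^{-1}$ from the decay estimates). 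For the $-2\nabla\bar\rho\cdot\nabla q = -2\nabla\bar\rho\cdot\nabla\bar\rho = -2|\nabla\bar\rho|^2 = -2\bar\rho w \le 0$ term — here using $q = \bar\rho$ — it is actually nonpositive and can simply be dropped, which is cleaner than the Young's-inequality route; if one instead keeps $q$ abstract one bounds $2|\nabla\bar\rho\cdot\nabla q|$ by $\frac{2}{\bar\rho}|\nabla\bar\rho|^2\cdot\|\nabla q/\sqrt{q}\|$-type quantities, but since the lemma fixes $q=\bar\rho$ the direct sign argument is best. The main obstacle I anticipate is purely bookkeeping: getting every $\beta$ factor and every sign right when commuting $\nabla$ through $\Delta_f$ (the drift $u = -\nabla f$ introduces sign flips) and through the time derivative of the nonlinear drift, and verifying that the cross terms genuinely assemble into the perfect square rather than leaving a residual — this is exactly the kind of computation the paper defers to the Appendix, so I would do it carefully once and cite it there.
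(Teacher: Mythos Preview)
Your proposal is correct and follows essentially the same route as the paper's Appendix proof: direct computation of $\partial_t$ and $\Delta$ on $|\nabla\bar\rho|^2/\bar\rho$ and $\bar\rho\log\bar\rho$, completion of the square to extract the nonpositive Bochner term $-\frac{2}{\bar\rho}|\nabla^2\bar\rho - \nabla\bar\rho\otimes\nabla\bar\rho/\bar\rho|^2$, and then bounding the drift-commutator term by $\|\nabla(K\ast\bar\rho)\|_{L^\infty}\lesssim (1+t)^{-1}$ (this is Lemma~\ref{lemma2} rather than Lemma~\ref{alemma}). Your observation that $-2\nabla\bar\rho\cdot\nabla q = -2|\nabla\bar\rho|^2\le 0$ when $q=\bar\rho$, and that $(\partial_t+q-\Delta_f)\bar\rho=0$ reduces \eqref{Lemma:LGE-2} to a one-line chain-rule computation, are exactly the simplifications the paper exploits in the $q=\bar\rho$ specialization.
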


The technical proof of the above lemma is based on direct calculations, which are similar to \cite[Lemma 4.1]{feng2023quantitative}, and we put them into the Appendix.

\begin{proof}[Proof of Theorem \ref{Thm:Logarithm gradient estimate}] 
For the short time result, we construct the auxiliary function to be propagated as
\begin{equation*}
\begin{aligned}
F(x,t) = \frac{|\nabla \bar{\rho}|^2}{\bar{\rho}} + C \bar{\rho} \log \bar{\rho} - C_1 \bar{\rho},
\end{aligned}
\end{equation*}
then we have
\begin{equation*}
\begin{aligned}
(\partial_t + q - \Delta_f )F(x,t) \leq C \frac{|\nabla \bar{\rho}|^2}{\bar{\rho}} - C \frac{|\nabla \bar{\rho}|^2}{\bar{\rho}} \leq 0.
\end{aligned}
\end{equation*}
Also, $F(x,0) \leq 0$ is equivalent to 
\begin{equation*}
|\nabla \log \bar{\rho}_0|^2 + C \log \bar{\rho}_0 \leq C_1,
\end{equation*}
which is valid thanks to the logathimic gradient estimate assumption \eqref{Thm:LGE-ass1} and the Gaussian upper bound assumption \eqref{Thm:LGE-ass2}. Finally, we verify the correction of the growth assumption \eqref{MP:condition 1}. Recall that $\nabla \bar{\rho} \in L^{\infty}$ and $\bar{\rho} \in L^{\infty}$ from Lemma \ref{alemma} and Lemma \ref{lemma2}, and the Gaussian lower bound Lemma \ref{Pro: Gaussian lower bound}, then we have
\begin{equation*}
\begin{aligned}
\int_0^T \int_{B_r} F_{+}^2(x,t)e^{f(x)}dxdt \leq T e^{C_T(1 + r^2)} \int_{B_r} e^{f(x)}dx \leq CTr^2 e^{C_T(1 + r^2)},
\end{aligned}
\end{equation*}
since $f = g \ast \bar{\rho}$ is bounded. Hence we may choose $\alpha(r) = C_Tr^2(1 + |\log r|)$ which satisfies \eqref{MP:condition 1} and \eqref{MP:condition 2}. Applying the maximum principle Theorem \ref{Thm: Max-pri}, we obtain that $F(x,t) \leq 0$, i.e.
\begin{equation}\label{maxpri-log}
|\nabla \log \bar{\rho}|^2 + C \log \bar{\rho} \leq C_1.
\end{equation}
Recall that the Gaussian lower bound Lemma \ref{Pro: Gaussian lower bound}
\begin{equation*}
\log \bar{\rho} \geq -C-C\log(1 + t)-C\frac{|x|^2}{1+t},
\end{equation*}
substituting this term into \eqref{maxpri-log}, we have
\begin{equation}\label{short time result}
|\nabla \log \bar{\rho}|^2 \leq C(1 + t)(1 + |x|^2),
\end{equation}
which gives us the short time result.

For the long time logarithmic gradient estimate for $\bar{\rho}$, we need to construct a new auxiliary function
\begin{equation*}
\tilde{F}(x,t) = \frac{t}{C+1}\frac{|\nabla \bar{\rho}|^2}{\bar{\rho}} + \bar{\rho} \log \bar{\rho} - C_1 \bar{\rho},
\end{equation*}
where the factor $t/(C+1)$ in $\tilde{F}$ is inspired by the proof of \cite[Theorem 4.1]{feng2023quantitative}. Similarly, we have
\begin{equation*}
(\partial_t + q - \Delta_f )\tilde{F}(x,t) \leq \frac{1}{C+1} \frac{|\nabla \bar{\rho}|^2}{\bar{\rho}} - \frac{|\nabla \bar{\rho}|^2}{\bar{\rho}} + \frac{t}{C+1} \frac{C}{1+t} \frac{|\nabla \bar{\rho}|^2}{\bar{\rho}} \leq 0.
\end{equation*}
By similar argument as the short time case, we can verify the growth assumptions \eqref{MP:condition 1} and \eqref{MP:condition 2} for $t \in [0, \infty)$ as before, and thus arrive at $\tilde{F} \leq 0$, i.e
\begin{equation}\label{LGE-nabla log rho}
|\nabla \log \bar{\rho}|^2 + \frac{C+1}{t} \log \bar{\rho} \leq \frac{C_1(C+1)}{t}.
\end{equation}
Substituting the lower bound of $\log \bar{\rho}$ in Proposition \ref{Pro: Gaussian lower bound},
\begin{equation*}
\log \bar{\rho} \geq -C-C\log(1+t)-C\frac{|x|^2}{1+t}
\end{equation*}
for some $C > 0$. We finally obtain
\begin{equation*}
|\nabla \log \bar{\rho}|^2 \leq \frac{C}{t}\Big(1+\log(1+t)+\frac{|x|^2}{1+t}\Big).
\end{equation*}
Combining with short time result \eqref{short time result}, we finish the logarithmic gradient estimate of $\bar{\rho}$ for $t \in [0, \infty)$.
\end{proof}

\subsection{Logarithmic Hessian Estimate}
Following the similar steps of the previous subsection, we further derive the logarithmic Hessian estimate for the limit density. Note that $\nabla^2 \log \bar{\rho} = \nabla^2 \bar{\rho}/\bar{\rho} - \nabla \log \bar{\rho} \otimes \nabla \log \bar{\rho}$, 
by our logarithmic gradient estimate Theorem \ref{Thm:Logarithm gradient estimate}, it suffices to control $\nabla^2 \bar{\rho}/\bar{\rho}$ by some quadratic function. We first need following calculations of propagation similar to Lemma \ref{Lemma:LGE}.

\begin{lemma}\label{Lemma:LHE}
Assume that $\bar{\rho}$ solves the limit equation (\ref{Eq:limiting equation}) and recall that $u = -\nabla f = K \ast \bar{\rho}, q = div K \ast \bar{\rho} = \bar{\rho} > 0$, we also assume that $q \in C^1([0,T],W^{1,\infty}(\R^2))$. Then we have
\begin{equation}\label{Lemma:LHE-1}
\begin{aligned}
(\partial_t + q - \Delta_f)\Big(\frac{|\nabla \bar{\rho}|^2}{\bar{\rho}}\Big) \leq - \frac{|\nabla^2 \bar{\rho}|^2}{\bar{\rho}} + \frac{2 |\nabla \bar{\rho}|^4}{\bar{\rho}^3} + \frac{C}{1+t}\frac{|\nabla \bar{\rho}|^2}{\bar{\rho}},
\end{aligned}
\end{equation}

\begin{equation}\label{Lemma:LHE-2}
\begin{aligned}
(\partial_t + q - \Delta_f)\Big(\frac{|\nabla^2 \bar{\rho}|^2}{\bar{\rho}}\Big) \leq \frac{C_1}{1+t
} \frac{|\nabla^2 \bar{\rho}|^2}{\bar{\rho}} + \frac{C_1'}{(1+t)^2} \frac{|\nabla \bar{\rho}|^2}{\bar{\rho}},
\end{aligned}
\end{equation}

\begin{equation}\label{Lemma:LHE-3}
\begin{aligned}
(\partial_t + q - \Delta_f)\Big(\bar{\rho} (\log \bar{\rho})^2\Big) = - \frac{2|\nabla \bar{\rho}|^2}{\bar{\rho}}(1 + \log \bar{\rho}) - 2 q \bar{\rho}  \log \bar{\rho}.
\end{aligned}
\end{equation}
\end{lemma}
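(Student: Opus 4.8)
The plan is to reduce all three relations to the single fact, established in Section~\ref{wellposedness}, that the limit equation \eqref{Eq:limiting equation} can be rewritten as
\[
(\partial_t + q - \Delta_f)\bar\rho = 0,
\]
since $\Delta_f\psi = \tfrac1\beta\Delta\psi + \langle\nabla f,\nabla\psi\rangle = \tfrac1\beta\Delta\psi - u\cdot\nabla\psi$ with $f = g\ast\bar\rho$, $u = K\ast\bar\rho = -\nabla f$ and $q = \udiv u = \bar\rho$. Differentiating this identity and controlling the resulting lower-order terms by the decay estimates of Lemma~\ref{alemma} and Lemma~\ref{lemma2} then yields everything, in the spirit of \cite[Lemma~4.1]{feng2023quantitative}; the detailed computations are deferred to the Appendix. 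For \eqref{Lemma:LHE-3} I would use only the chain rule for the operator: for smooth $\phi$, $(\partial_t + q - \Delta_f)\phi(\bar\rho) = q\,\phi(\bar\rho) - q\,\bar\rho\,\phi'(\bar\rho) - \tfrac1\beta\phi''(\bar\rho)|\nabla\bar\rho|^2$, which follows from $(\partial_t+q-\Delta_f)\bar\rho = 0$ together with $\Delta_f(\phi(\bar\rho)) = \phi'(\bar\rho)\Delta_f\bar\rho + \tfrac1\beta\phi''(\bar\rho)|\nabla\bar\rho|^2$. This is the identity behind \eqref{Lemma:LGE-2} (take $\phi(s)=s\log s$), and taking instead $\phi(s)=s(\log s)^2$, so $\phi'(s)=(\log s)^2+2\log s$ and $\phi''(s)=2s^{-1}(1+\log s)$, produces \eqref{Lemma:LHE-3}.

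For \eqref{Lemma:LHE-1} I would start from the exact identity already recorded in Lemma~\ref{Lemma:LGE}, that $(\partial_t+q-\Delta_f)(|\nabla\bar\rho|^2/\bar\rho)$ equals $-\tfrac2{\bar\rho}\big|\nabla^2\bar\rho - \bar\rho^{-1}\nabla\bar\rho\otimes\nabla\bar\rho\big|^2 - \tfrac2{\bar\rho}\nabla\bar\rho\cdot\nabla(K\ast\bar\rho)\cdot\nabla\bar\rho - 2\nabla\bar\rho\cdot\nabla q$. Expanding the square gives $-\tfrac2{\bar\rho}|\nabla^2\bar\rho|^2 + \tfrac4{\bar\rho^2}\nabla\bar\rho\cdot\nabla^2\bar\rho\cdot\nabla\bar\rho - \tfrac2{\bar\rho^3}|\nabla\bar\rho|^4$; absorbing the cross term by Young's inequality, $\tfrac4{\bar\rho^2}\nabla\bar\rho\cdot\nabla^2\bar\rho\cdot\nabla\bar\rho \le \tfrac{|\nabla^2\bar\rho|^2}{\bar\rho} + \tfrac{4|\nabla\bar\rho|^4}{\bar\rho^3}$, leaves $-\tfrac{|\nabla^2\bar\rho|^2}{\bar\rho} + \tfrac{2|\nabla\bar\rho|^4}{\bar\rho^3}$; and for the remaining two terms one uses $\nabla q=\nabla\bar\rho$ (so $-2\nabla\bar\rho\cdot\nabla q = -2|\nabla\bar\rho|^2\le0$) and $\|\nabla(K\ast\bar\rho)\|_{L^\infty}\lesssim(1+t)^{-1}$ from Lemma~\ref{lemma2}, which bounds $\big|\tfrac2{\bar\rho}\nabla\bar\rho\cdot\nabla(K\ast\bar\rho)\cdot\nabla\bar\rho\big|$ by $\tfrac{C}{1+t}\tfrac{|\nabla\bar\rho|^2}{\bar\rho}$.

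The substantive step is \eqref{Lemma:LHE-2}. I would differentiate $(\partial_t+q-\Delta_f)\bar\rho=0$ twice in space; using the commutator $[\partial_i,\Delta_f]\psi = -\partial_i u^k\,\partial_k\psi$, the Hessian $w_{ij}=\partial_i\partial_j\bar\rho$ satisfies
\[
(\partial_t+q-\Delta_f)w_{ij} = -\partial_i u^k w_{jk} - \partial_j u^k w_{ik} - \partial_i\partial_j u^k\,\partial_k\bar\rho - \bar\rho\,\partial_i\partial_j q - \partial_i q\,\partial_j\bar\rho - \partial_j q\,\partial_i\bar\rho
\]
(summation over $k$). Contracting with $2w_{ij}$, invoking the Bochner-type identity $(\partial_t+q-\Delta_f)|\nabla^2\bar\rho|^2 = 2w_{ij}(\partial_t+q-\Delta_f)w_{ij} - q|\nabla^2\bar\rho|^2 - \tfrac2\beta|\nabla^3\bar\rho|^2$, and then dividing by $\bar\rho$ — at which point the $-q|\nabla^2\bar\rho|^2$ term exactly cancels the $+q|\nabla^2\bar\rho|^2/\bar\rho$ created by the quotient rule, and the remaining third-order terms complete a square — one reaches
\[
(\partial_t+q-\Delta_f)\Big(\tfrac{|\nabla^2\bar\rho|^2}{\bar\rho}\Big) = -\tfrac2{\beta\bar\rho}\Big|\nabla^3\bar\rho - \tfrac{\nabla^2\bar\rho\otimes\nabla\bar\rho}{\bar\rho}\Big|^2 + R,
\]
where the remainder $R$ is a sum of terms built from $\nabla u$, $\nabla^2 u$, $\nabla q=\nabla\bar\rho$, $\nabla^2 q=\nabla^2\bar\rho$, $w_{ij}$ and $\nabla\bar\rho$. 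Discarding the sign-definite term and estimating $R$ by Young's inequality with a time-dependent weight $\varepsilon\sim(1+t)^{-1}$, together with $\|\nabla u\|_{L^\infty}\lesssim(1+t)^{-1}$, $\|\nabla^2 u\|_{L^\infty}\lesssim(1+t)^{-3/2}$, $\|\bar\rho\|_{L^\infty}\lesssim(1+t)^{-1}$ and $\|\nabla\bar\rho\|_{L^\infty}\lesssim(1+t)^{-3/2}$ from Lemma~\ref{alemma} and Lemma~\ref{lemma2}, every term in $R$ falls below $\tfrac{C_1}{1+t}\tfrac{|\nabla^2\bar\rho|^2}{\bar\rho} + \tfrac{C_1'}{(1+t)^2}\tfrac{|\nabla\bar\rho|^2}{\bar\rho}$, which is \eqref{Lemma:LHE-2}.

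I expect the main obstacle to be the bookkeeping in \eqref{Lemma:LHE-2}: producing the commutator terms correctly when differentiating the drift--diffusion operator twice, recognizing the third-order completion of squares that yields the good term, and — most delicately — checking that the decay rates from Lemma~\ref{lemma2} are fast enough that the contributions involving $\nabla^2 u$ and $\nabla q\otimes\nabla\bar\rho$, which a priori threaten to generate a $|\nabla\bar\rho|^4/\bar\rho^3$-type obstruction incompatible with the right-hand side of \eqref{Lemma:LHE-2}, in fact cost only the advertised powers $(1+t)^{-1}$ and $(1+t)^{-2}$ after being split by the weighted Young's inequality.
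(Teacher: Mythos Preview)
Your proposal is correct and follows essentially the same route as the paper's Appendix proof. For \eqref{Lemma:LHE-1} and \eqref{Lemma:LHE-3} your argument is identical to the paper's (the paper expands the square in the identity from Lemma~\ref{Lemma:LGE} and applies Young, and computes $\bar\rho(\log\bar\rho)^2$ by the chain rule exactly as you do); for \eqref{Lemma:LHE-2} you organize the computation via commutators $[\partial_i,\Delta_f]$ while the paper computes $\Delta(|\nabla^2\bar\rho|^2/\bar\rho)$ and $\partial_t(|\nabla^2\bar\rho|^2/\bar\rho)$ directly, but both arrive at the same exact identity with the good term $-\tfrac{2}{\bar\rho}\big|\nabla^3\bar\rho-\bar\rho^{-1}\nabla\bar\rho\otimes\nabla^2\bar\rho\big|^2$ plus the same five remainder terms, and the decay bounds from Lemmas~\ref{alemma}--\ref{lemma2} close the estimate in the same way (the paper handles the $\nabla q\otimes\nabla\bar\rho$ contribution by invoking $\|\nabla^2\bar\rho\|_{L^\infty}\lesssim(1+t)^{-2}$ directly rather than your weighted Young plus $\|\nabla\bar\rho\|_{L^\infty}^2\lesssim(1+t)^{-3}$, but either works).
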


The technical proof of Lemma \ref{Lemma:LHE} is based on direct calculations, which are similar to \cite[Lemma 4.2]{feng2023quantitative}, and we put them into the Appendix.

\begin{proof}[Proof of Theorem \ref{Thm:Logarithm Hessian estimate}] 
For the short time result, we construct the auxiliary function to be propagated as
\begin{equation*}
\begin{aligned}
F(x,t) = e^{-C_1t} \frac{|\nabla^2 \bar{\rho}|^2}{\bar{\rho}} -C \bar{\rho}(\log \bar{\rho})^2 + C_2
 \bar{\rho} \log \bar{\rho} -C'(1+t) \bar{\rho}.
\end{aligned}
\end{equation*}
Recall that $\bar{\rho} \in L^{\infty}$ and the Gaussian upper bound Lemma \ref{Pro: Gaussian upper bound}, then we can assume there exists some $A > 0$ such that $\log \bar{\rho} \leq A$ holds for all time $t$. Hence we select $C_2, C'$ large enough, i.e. $C_1' + 2C(1 + A) < C_2$ and $C'>2CAe^A$, and we have 
\begin{equation*}
\begin{aligned}
(\partial_t + q - \Delta_f)F(x,t) \leq & -C_1 e^{-C_1 t}\frac{|\nabla^2 \bar{\rho}|^2}{\bar{\rho}} + C_1 e^{-C_1 t} \frac{|\nabla^2 \bar{\rho}|^2}{\bar{\rho}} + C_1' e^{-C_1 t} \frac{|\nabla \bar{\rho}|^2}{\bar{\rho}} \\ & + 2C(1 + \log \bar{\rho}) \frac{|\nabla \bar{\rho}|^2}{\bar{\rho}} + 2C q \bar{\rho} \log \bar{\rho} - C_2 \frac{|\nabla \bar{\rho}|^2}{\bar{\rho}}-C'\bar\rho \\ \leq & \,\Big(C_1' e^{-C_1t} + 2C(1 + \log \bar{\rho}) - C_2\Big) \frac{|\nabla \bar{\rho}|^2}{\bar{\rho}} + 2C \bar{\rho}^2 \log \bar{\rho}-C'\bar\rho \leq 0.
\end{aligned}
\end{equation*}
Also, $F(x,0) \leq 0$ is equivalent to 
\begin{equation*}
\begin{aligned}
\frac{|\nabla^2 \bar{\rho}_0|^2}{\bar{\rho}_0^2} + C_2 \log \bar{\rho}_0 \leq C \bar{\rho_0}(\log \bar{\rho_0})^2 + C'. 
\end{aligned}
\end{equation*}
By our initial logarithmic Hessian estimate assumption \eqref{Thm:LGE-ass1}, the left-handside is bounded by some $C_3 + C_3|x|^2$ from above. By our initial Gaussian upper bound and logarithmic gradient esitmate assumptions \eqref{Thm:LGE-ass1} and \eqref{Thm:LGE-ass2}, the right-handside is bounded by some $C_4 + C_4|x|^2$ from below for $|x|$ large, and hence for all $x$. Now we can select $C_4$ sufficiently large by choosing  $C, C'$ large enough. Hence the initial condition is satisfied. Finally we verify the growth conditions \eqref{MP:condition 1} and \eqref{MP:condition 2} of Theorem \ref{Thm: Max-pri}. Recall that $\nabla^2 \bar{\rho} \in L^{\infty}$, $\nabla \bar{\rho} \in L^{\infty}$ and $ \bar{\rho} \in L^{\infty}$ from the long time asymptotic decay Lemma \ref{alemma} and Lemma \ref{lemma2}, and the Gaussian lower bound Lemma \ref{Pro: Gaussian lower bound}, we have
\begin{equation*}
\begin{aligned}
\int_0^T \int_{B_r} F_{+}^2(x,t)e^{f(x)}dxdt \leq T e^{C(1 + T)(1 + r^2)} \int_{B_r} e^{f(x)}dx \leq CTr^2 e^{C_T(1 + r^2)},
\end{aligned}
\end{equation*}
since $f = g \ast \bar{\rho}$ is bounded. Hence we may choose $\alpha(r) = C_Tr^2(1 + |\log r|)$ which satisfies \eqref{MP:condition 1} and \eqref{MP:condition 2}. Applying the maximum principle Theorem \ref{Thm: Max-pri}, we obtain that $F(x,t) \leq 0$, i.e.
\begin{equation}\label{short time result-LHE1}
\frac{|\nabla^2 \bar{\rho}|^2}{\bar{\rho}^2} \leq e^{C_1t}\Big(C \bar{\rho}(\log \bar{\rho})^2 - C_2 \log \bar{\rho} + C'(1+t)\Big).
\end{equation}
Recall that the Gaussian lower bound Lemma \ref{Pro: Gaussian lower bound}
\begin{equation*}
\log \bar{\rho} \geq -C-C\log(1 + t)-C\frac{|x|^2}{1+t} \ \text{and} \ \log \bar{\rho} \leq A, 
\end{equation*}
substituting this term into \eqref{short time result-LHE1}, we have
\begin{equation}\label{short time result-LHE}
\frac{|\nabla^2 \bar{\rho}|^2}{\bar{\rho}^2} \leq e^{C_1t}(1 + t)(1 + |x|^4),
\end{equation}
which gives us the short time result.

For the long time logarithmic Hessian estimate for $\bar{\rho}$, we construct a new auxiliary function as
\begin{equation*}
\tilde{F}(x,t) = \phi \frac{|\nabla^2 \bar{\rho}|^2}{\bar{\rho}} + \varphi \frac{|\nabla \bar{\rho}|^2}{\bar{\rho}} - A_1 \bar{\rho}(\log \bar{\rho})^2 + A_2 \bar{\rho} \log \bar{\rho} - A_3 \bar{\rho}.
\end{equation*}
where $A_1, A_2, A_3 > 0$. The two factors $\phi, \varphi$ in $\tilde{F}$ are inspired by the proof of \cite[Theorem 4.4]{feng2023quantitative}, which are defined as
\begin{equation*}
\phi(t) = \frac{t^2}{(C+1)(C_1+2)}, \ \ \ \varphi(t) = \frac{A_1 t}{C+1}.
\end{equation*}
Then we deduce that 
\begin{equation}\label{tilde-F}
\begin{aligned}
(\partial_t - q - \Delta_f)\tilde{F}(x,t) \leq & \Big(\phi' + \frac{C_1}{t} \phi - \varphi\Big) \frac{|\nabla^2 \bar{\rho}|^2}{\bar{\rho}} \\ & + \Big(\frac{C_1'}{t^2} \phi + \varphi' + \frac{C\varphi}{t} + 2\phi|\nabla \log \bar{\rho}|^2 + 2A_1(1 + \log \bar{\rho}) - A_2\Big) \frac{|\nabla \bar{\rho}|^2}{\bar{\rho}}.
\end{aligned}
\end{equation}
Note that
\begin{equation*}
\phi' + \frac{C_1}{t} \phi - \varphi \leq \bigg[ \frac{2}{(C+1)(C_1 + 1)} + \frac{1}{C+1} - \frac{A_1}{C+1} \bigg]t,
\end{equation*}
then we can choose $A_1$ large enough to make sure that the first term of the right-handside of \eqref{tilde-F} is non-positive. In terms of the second term of the right-handside of \eqref{tilde-F}, we use \eqref{LGE-nabla log rho} in the proof of Theorem \ref{Thm:Logarithm gradient estimate}, 
\begin{equation*}
2 \phi|\nabla \log \bar{\rho}|^2 \leq 2A_1 C_1 - 2 A_1 \log \bar{\rho}.  
\end{equation*}
Then we have
\begin{equation*}
\begin{aligned}
\frac{C_1'}{t^2} \phi + \varphi' + \frac{C\varphi}{t} + 2\phi|\nabla \log \bar{\rho}|^2 + 2A_1(1 + \log \bar{\rho}) - A_2 \leq \frac{1}{C+1} + A_1 + 2A_1(C_1 + 1) - A_2.
\end{aligned}
\end{equation*}
Now we can choose $A_2$ large enough to make sure the the second term of the right-handside of \eqref{tilde-F} is non-positive. By similar argument as the short time case, we can verify the growth assumptions \eqref{MP:condition 1} and \eqref{MP:condition 2} for $t \in [0, \infty)$ as before, and thus arrive at $\tilde{F} \leq 0$, i.e.
\begin{equation}\label{LHE-nabla log rho}
t^2 \frac{|\nabla^2 \bar{\rho}|^2}{\bar{\rho}^2} \leq C(\log \bar{\rho})^2 + C
\end{equation}
Substituting the lower and upper bound of $\log \bar{\rho}$ in Lemma \ref{Pro: Gaussian lower bound} and Lemma \ref{Pro: Gaussian upper bound}
\begin{equation*}
\log \bar{\rho} \geq -C-C\log(1+t) - C\frac{|x|^2}{1+t} \ \ \ \text{and} \ \ \ \log \bar{\rho} \leq C-\log(1+t)-\frac{|x|^2}{C(1+t)},
\end{equation*}
we finally obtain that
\begin{equation*}
\frac{|\nabla^2 \bar{\rho}|^2}{\bar{\rho}^2} \leq \frac{C}{t^2}\Big(1 + \log^2(1+t) + \frac{|x|^4}{t^2}\Big).
\end{equation*}
Combining with the short time result \eqref{short time result-LHE}, we finish the logarithmic Hessian estimate of $\bar{\rho}$ for $t \in [0, \infty)$.
\end{proof}

\section{Proof of Main Result}\label{Proof of the main result}

Now we are arriving at giving the proof of our main result. The main idea, as explained before, is to extend the modulated free energy method, which has been successfully applied on the torus case to derive the propagation of chaos results for 2D repulsive Coulomb kernel and mild attractive kernel (including the 2D Patlak-Keller-Segel kernel) in the series of works \cite{bresch2019mean,bresch2019modulated,bresch2023mean}, to the whole Euclidean space. Recall the definition of modulated free energy on the whole space between the joint law $\rho_N$ and the tensorized law $\bar\rho_N$ given by the following weighted relative entropy form
\begin{equation}
E_N(\rho_N|\bar{\rho}_N) = \frac{1}{N} \int_{\R^{dN}} \rho_N \log \bigg( \frac{\rho_N(t, X^N)}{G_N(t, X^N)} \frac{G_{\bar{\rho}_N}(t, X^N)}{\bar{\rho}_N(t, X^N
)}\bigg) \ud X^N.
\end{equation}
We aim at controlling the time derivative of the modulated free energy by itself, together with some extra small error terms. Then the desired result comes from the classical Gr\"onwall argument.

\subsection{Time Evolution of Modulated Free Energy}

In this subsection we present the time derivative of the modulated free energy on the whole space, which is almost the same as the torus case for Coulomb potentials. Therefore, we adapt the proof of \cite[Proposition 2.1]{bresch2019modulated} and claim the following lemma, and refer the proof to \cite{bresch2019modulated}.
\begin{lemma}[Time Evolution of Modulated Free Energy]\label{Lemma:evolution of MFE}
Assume that $\rho_N$ is an entropy solution to the Liouville equation \eqref{Eq:Liouville} and assume that $\bar{\rho} \in C([0,\infty), W^{2,1} \cap W^{2,\infty}(\R^2))$ solves the limit equation \eqref{Eq:limiting equation}. Then the time evolution of modulated free energy satisfies that
\begin{equation}\label{IEq:time derivative of MFE}
\begin{aligned}
E_N(\rho_N|\bar{\rho}_N) \leq & \,E_N(\rho_N|\bar{\rho}_N)(0) - \frac{1}{\beta N} \int_0^t \int_{\R^{2N}} \bigg| \nabla \log \frac{\rho_N}{\bar{\rho}_N} - \nabla \log \frac{G_N}{G_{\bar{\rho}_N}}\bigg|^2 \ud \rho_N(x)
\\ & - \frac{1}{2} \int_0^t \int_{\R^{2N}} \int_{(\R^2)^2 \backslash \Delta} \Big(w(x) - w(y)\Big) \cdot \nabla g(x-y) ( \ud \mu_N - \ud \bar{\rho})^{\otimes 2} \ud \rho_N(X^N), 
\end{aligned}
\end{equation}
where $\mu_N = \frac{1}{N} \sum_{i=1}^N \delta_{x_i}$ is the empirical measure of the particle system \eqref{Eq:particle system} and the vector field in the commutator term is given by $w(x) = \nabla \log \bar{\rho}(x) + \beta \nabla g \ast \bar{\rho}(x)$.
\end{lemma}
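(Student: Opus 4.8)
The plan is to split $E_N(\rho_N|\bar{\rho}_N)$ into the two pieces of its weighted relative entropy form \eqref{Weighted relative entropy}, namely $E_N=\frac1N\int\rho_N\log(\rho_N/G_N)\ud X^N-\frac1N\int\rho_N\log(\bar{\rho}_N/G_{\bar{\rho}_N})\ud X^N$, and to handle the time evolution of each. The first thing I would record is that both dynamics are gradient-flow-like relative to their Gibbs weights: because $K=-\nabla g$, one checks that $\frac1N\sum_{j\neq i}K(x_i-x_j)=\frac1\beta\nabla_{x_i}\log G_N$ and $K\ast\bar{\rho}=\frac1\beta\nabla\log G_{\bar{\rho}}$, so the Liouville equation \eqref{Eq:Liouville} becomes $\partial_t\rho_N=\frac1\beta\sum_i\operatorname{div}_{x_i}(\rho_N\nabla_{x_i}\log\frac{\rho_N}{G_N})$ and the limit equation \eqref{Eq:limiting equation} becomes $\partial_t\bar{\rho}=\frac1\beta\operatorname{div}(\bar{\rho}\nabla\log\frac{\bar{\rho}}{G_{\bar{\rho}}})$, the spatially constant term in $\log G_{\bar{\rho}}$ being invisible to $\nabla$. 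This rewriting is what makes the cancellations below transparent.

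For the first piece I would not differentiate formally but invoke the entropy dissipation inequality \eqref{IEq:entropy inequality} directly, which already gives $\frac1N\int\rho_N(t)\log\frac{\rho_N(t)}{G_N}+\frac1{\beta N}\sum_i\int_0^t\int\rho_N|\nabla_{x_i}\log\frac{\rho_N}{G_N}|^2\le\frac1N\int\rho_N^0\log\frac{\rho_N^0}{G_N}$ and accounts for the fact that $\rho_N$ is only an entropy solution. For the second piece, write $\log(\bar{\rho}_N/G_{\bar{\rho}_N})=\sum_i h(t,x_i)-\frac{\beta N}{2}\int g\ast\bar{\rho}\,\bar{\rho}$ with $h=\log\bar{\rho}+\beta g\ast\bar{\rho}$, so that $\nabla h=w$; here everything is smooth in $\bar{\rho}$, so I would compute $\frac{d}{dt}$ by the product rule, substituting the Liouville equation in gradient form for the $\partial_t\rho_N$ contribution and the limit equation for the $\partial_t h$ contribution, and integrating by parts in the $x_i$ variables. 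The spatially constant piece $\int g\ast\bar{\rho}\,\bar{\rho}$ produces only a total time derivative which telescopes and drops out.

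Finally I would differentiate the modulated energy term $\frac1N\int F_N(\mu_N,\bar{\rho})\rho_N\ud X^N$: its transport part, obtained again from the Liouville equation and integration by parts, produces first-order terms in $\nabla g(x_i-x_j)$ and $\nabla g\ast(\mu_N-\bar{\rho})$ together with a second-order (diffusive) contribution, while its $\partial_t\bar{\rho}$ part is rewritten via the limit equation. Adding the three contributions with exactly the weight $\frac1\beta$ appearing in \eqref{Quantity:modulated free energy-1}, I expect two cancellations. First, the singular contribution $\operatorname{div}_xK(x-y)$ — which here, unlike the divergence-free 2D Navier--Stokes kernel, genuinely does not vanish — shows up on the relative-entropy side and on the modulated-energy side with opposite signs and matching weights, and cancels. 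Second, after completing the square (adding and subtracting $\nabla\log(\bar{\rho}_N/G_{\bar{\rho}_N})$ in the Fisher term and absorbing the cross terms coming from the second piece) the quadratic part reorganizes into $-\frac1{\beta N}\int_0^t\int|\nabla\log\frac{\rho_N}{\bar{\rho}_N}-\nabla\log\frac{G_N}{G_{\bar{\rho}_N}}|^2\ud\rho_N$, and the remaining bilinear-in-$(\mu_N-\bar{\rho})$ terms assemble, after symmetrizing the two integration variables, into $-\frac12\int_0^t\int\int_{(\R^2)^2\setminus\Delta}(w(x)-w(y))\cdot\nabla g(x-y)(\ud\mu_N-\ud\bar{\rho})^{\otimes2}\ud\rho_N$ with $w=\nabla\log\bar{\rho}+\beta\nabla g\ast\bar{\rho}$; this is exactly \cite[Proposition~2.1]{bresch2019modulated}.

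The main obstacle is not this algebra, which is identical to the torus case, but the analytic justification on $\R^2$: there is no positive lower bound on $\bar{\rho}$, the configuration space is unbounded, and $g$ is both singular on the diagonal and unbounded at infinity, so one must check that all the integrations by parts are licit, that no boundary term survives at infinity, and that every integral against $\rho_N$ or against the diagonal-excised bilinear form $F_N$ converges. I would control this using the a priori estimates of Sections \ref{A priori regularity estimates}--\ref{Hamilton type gradient estimates}: the $W^{2,1}\cap W^{2,\infty}$ regularity and decay of $\bar{\rho}$ (Lemmas \ref{alemma}, \ref{lemma2}), the Gaussian upper and lower bounds (Lemmas \ref{Pro: Gaussian upper bound}, \ref{Pro: Gaussian lower bound}) and the logarithmic gradient bound (Theorem \ref{Thm:Logarithm gradient estimate}), which together ensure that $h$, $w=\nabla h$, $g\ast\bar{\rho}$ and $\nabla g\ast\bar{\rho}$ are bounded with controlled spatial growth. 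Keeping careful track of which manipulations yield only spatially constant or exact-derivative quantities — so that they legitimately disappear from the final inequality — is the last point that needs attention.
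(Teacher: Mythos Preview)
Your outline is correct and follows exactly the approach the paper adopts: the paper does not give its own proof of this lemma but simply states that the argument is the same as \cite[Proposition~2.1]{bresch2019modulated} and refers the reader there. Your sketch — rewriting both dynamics in gradient-flow form relative to $G_N$ and $G_{\bar\rho}$, invoking the entropy dissipation inequality \eqref{IEq:entropy inequality} for the first piece, differentiating the second piece, and then observing the two cancellations (of $\operatorname{div}K$ and of the cross terms into the modulated Fisher square) together with the final symmetrization — is precisely that computation; note only that your ``third'' paragraph on the modulated energy is redundant with the two-piece weighted split you already set up, since $E_N=\frac1N\int\rho_N\log(\rho_N/G_N)-\frac1N\int\rho_N\log(\bar\rho_N/G_{\bar\rho_N})$ already contains $F_N$, so you should fold that discussion into the second piece rather than treat it separately.
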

\begin{remark}
Note that
\begin{equation*}
w(x) = \nabla \log \frac{\bar{\rho}}{G_{\bar{\rho}}}(x) = \nabla \log \bar{\rho}(x) + \beta \nabla g \ast \bar{\rho}(x). 
\end{equation*}
As pointed out in previous works \cite{bresch2019modulated}, the weighted combination of relative entropy and modulated energy, which forms the modulated free energy, exactly cancels the term $\text{div} K$ appearing in the evolution of relative energy. In the Coulomb potential case we consider here, we have $\udiv K=\Delta g=\delta_0$, hence any single argument using only relative entropy $\mathcal{H}_N(\rho_N|\bar{\rho}_N)$ or modulated energy $F(\mu_N, \bar{\rho})$ individually will not work. Jabin-Wang \cite{jabin2018quantitative} uses the relative entropy to deal with the 2D Boit-Savart law kernel case thanks to that $\text{div} K = 0$, while Rosenzweig-Serfaty \cite{rosenzweig2023global} resolves the propagation of chaos of sub-Coulomb potentials in the whole space because of the available estimates on $\Delta g$ when $g$ is in the sub-Coulomb range.
\end{remark}

\subsection{Large Deviation Theorem}

In the original paper \cite{jabin2018quantitative} for entropic propagation of chaos for 2D Navier Stokes equation, Jabin-Wang developed some Large Deviation type theorem, in order to control the exponential integral of some large sum of test functions with respect to the tensor of i.i.d. laws $\bar\rho^{\otimes N}$, which naturally appears during the computation of the time evolution of the normalized relative entropy. By the Taylor's expansion of the exponential function and counting for each term carefully that how many terms in the summation actually vanish, thanks to the two-side cancellation conditions of the test function \eqref{LDP:condition1}, one is able to bound the large integral by some $O(1/N)$ quantity, coinciding with the global optimal rate for mean-field convergence, while one may face $O(1)$ bound by crucial estimates. Now that our commutator term appearing in the last term of \eqref{IEq:time derivative of MFE} also has a natural form of such exponential integral, we wish to control it by the Large Deviation Theorem, i.e. \cite[Theorem 4]{jabin2018quantitative}. We state the result in this subsection while referring the proof to \cite{jabin2018quantitative}.

\begin{theorem}[Large Deviation]\label{Thm:LDP}
Consider any $\phi(x,y)$ satisfying the canceling properties 
\begin{equation}\label{LDP:condition1}
\int \phi(x,y) \ud \bar{\rho}(x) = 0\ \ \text{for all}\ x, \ \ \ \int \phi(x,y) \ud \bar{\rho}(y) = 0\ \ \text{for all}\ y,
\end{equation}
and
\begin{equation}\label{LDP:condition2}
\gamma = C \bigg( \sup_{p \geq 1} \frac{\|\sup_y |\phi(\cdot,y)|\|_{L^p(\bar{\rho} \ud x)}}{p}\bigg) < 1,
\end{equation}
Then we have 
\begin{equation*}
\int_{\R^{2N}} \exp \bigg( \frac{1}{N} \sum_{i,j=1} \phi(x_i,x_j) \bigg) \ud \bar{\rho}_N \leq \frac{2}{1- \gamma} < \infty, 
\end{equation*}
where $\bar{\rho} = \bar{\rho}^{\otimes N}$.
\end{theorem}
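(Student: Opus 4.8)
The plan is to follow the combinatorial strategy of \cite{jabin2018quantitative}: expand the exponential into a power series, reduce the claim to a moment estimate for $S_N:=\frac1N\sum_{i,j=1}^N\phi(x_i,x_j)$ against $\bar{\rho}_N=\bar{\rho}^{\otimes N}$, and use the two cancellation identities \eqref{LDP:condition1} to annihilate the overwhelming majority of the resulting monomials. Concretely, I would start from
\[
\int_{\R^{2N}} e^{S_N}\,\ud\bar{\rho}_N \;=\; \sum_{k\geq 0}\frac{1}{k!\,N^k}\sum_{1\le i_1,j_1,\dots,i_k,j_k\le N}\ \int_{\R^{2N}}\prod_{\ell=1}^{k}\phi(x_{i_\ell},x_{j_\ell})\,\ud\bar{\rho}_N ,
\]
and note that, since $\bar{\rho}_N$ is a tensor product, each monomial integral factorizes over the distinct indices occurring among $i_1,j_1,\dots,i_k,j_k$.

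The key point is the \emph{cancellation mechanism}. If some index $m$ occurs in exactly one of the $2k$ argument slots, then integrating first in the variable $x_m$ and applying the appropriate identity in \eqref{LDP:condition1} (the first one if $m$ sits as a first argument, the second one if as a second argument) kills the entire monomial. Hence only the monomials in which every index that appears does so with multiplicity at least $2$ survive; in particular at most $k$ distinct indices are involved. For a surviving monomial, let $d_m\geq 2$ denote the multiplicity of index $m$, so $\sum_m d_m=2k$. Bounding each factor by $|\phi(x_{i_\ell},x_{j_\ell})|\leq \sup_y|\phi(x_{i_\ell},y)|$ (or symmetrically) and applying the generalized H\"older inequality in the independent variables $x_m$ bounds the monomial integral by a product of the form $\prod_m \|\sup_y|\phi(\cdot,y)|\|_{L^{p_m}(\bar{\rho})}$ with exponents $p_m$ tied to the $d_m$; feeding in \eqref{LDP:condition2} in the form $\|\sup_y|\phi(\cdot,y)|\|_{L^p(\bar{\rho})}\leq \gamma p/C$ turns this into a quantity of size $(\gamma/C)^{\#\{\text{distinct indices}\}}$ times explicit combinatorial factors.

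It remains to count and sum. The dominant family consists of the configurations with exactly $k$ distinct indices, each appearing exactly twice: there are $\binom{N}{k}$ choices of the index set and $(2k)!/2^k$ ways of filling the slots, and the families with fewer than $k$ distinct indices carry extra powers of $1/N$ and contribute only lower order. Combining the count $\lesssim N^k(2k)!/(k!\,2^k)$, the per-monomial bound $\lesssim (2\gamma/C)^k$, and the prefactor $1/(k!\,N^k)$, the $k$-th term is at most $\binom{2k}{k}(\gamma/C)^k\le (4\gamma/C)^k$, which is $\le \gamma^k$ once the universal constant $C$ in \eqref{LDP:condition2} is taken large enough; the geometric series then sums to $\tfrac{1}{1-\gamma}$. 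The diagonal contributions $\phi(x_i,x_i)$, which are \emph{not} killed by \eqref{LDP:condition1}, are treated separately by isolating $\exp\!\big(\tfrac1N\sum_i\phi(x_i,x_i)\big)$, which by independence equals $\big(\int e^{\phi(x,x)/N}\,\ud\bar{\rho}\big)^N$ and is controlled by an absolute constant using \eqref{LDP:condition2} with $p\sim N$; tracking the constants yields the stated bound $\tfrac{2}{1-\gamma}$.

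The main obstacle is exactly this combinatorial bookkeeping: one must check that the ``each index exactly twice'' family genuinely dominates — i.e. that losing a factor $N$ for every index coincidence beyond the generic pattern is overcompensated by the $1/N$ attached to the extra factor of $\phi$ it forces — and, at the same time, that the exponents generated by the generalized H\"older step are compatible with the \emph{linear}-in-$p$ growth allowed by \eqref{LDP:condition2}, so that no factorial blow-up survives and the resulting series is geometric in $\gamma$. These two points are the technical heart of \cite[Theorem 4]{jabin2018quantitative}, whose argument we follow.
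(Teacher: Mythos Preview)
Your proposal is correct and aligns with the paper's approach: the paper does not give its own proof of this theorem but simply refers to \cite[Theorem~4]{jabin2018quantitative}, whose combinatorial Taylor-expansion argument (exploiting the two-sided cancellations \eqref{LDP:condition1} and the linear-in-$p$ moment bound \eqref{LDP:condition2}) is exactly what you outline. The paper also mentions the alternative martingale proof of Lim--Lu--Nolen \cite{lim2020quantitative} for the $L^\infty$ case, but the combinatorial route you describe is the one it points to for the general statement.
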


We mention that the condition \eqref{LDP:condition2} for $\gamma$ being small is automatically satisfied once the test function $\phi \in L^\infty$ with small enough $L^\infty$ norm, which is in its nature due to the convergence radius for some Taylor's series, see the combinatory proof of \cite{jabin2018quantitative}. Later Lim-Lu-Nolen \cite{lim2020quantitative} also provides a simpler probabilistic proof using the martingale expansion and martingale inequalities, for the special case that $\phi \in L^\infty$.

However, the simple case stated above can only help us deal with the torus case, as there exists some $\nabla \log \bar\rho$ terms in the explicit form of the test function $\phi$, which is only able to be bounded on the compact domain once we make the assumption that
\begin{equation*}
    \inf \bar\rho \geq \lambda>0
\end{equation*}
as in the series of previous works \cite{jabin2018quantitative,wynter2021quantitative,guillin2024uniform,shao2024quantitative}. For our whole space case, we can never expect such a lower bound on $\bar\rho$ and hence never get $\nabla \log \bar\rho \in L^\infty$. But the good news is, as observed firstly in \cite{feng2023quantitative}, the condition \eqref{LDP:condition2} allows the test function $\phi$ to be only bounded in $y$ with quadratic growth in $x$, once we have the Gaussian upper bound for the limit density $\bar\rho$. This relaxation of requirements meets with our case exactly.

\subsection{Control of Modulated Free Energy}

Now we proceed to estimate the time derivative of modulated free energy given in the first subsection. Note that the second term of the right-handside of \eqref{IEq:time derivative of MFE} is nothing but the minus time integral of some normalized modulated Fisher information term, which is obviously non-positive. Hence we are able to neglect it directly. From now on we only focus on the third error term of the right-handside of \eqref{IEq:time derivative of MFE}. Since $K(x) = \nabla g(x)$ is odd, i.e. $K(-x)=-K(x)$, we rewrite this term by the classical symmetrization trick into
\begin{equation}\label{IEq: functional inequality}
\begin{aligned}
& \int_{\R^{2N}} \int_{(\R^2)^{2N}\backslash \Delta} (w(x) - w(y)) \cdot K(x-y) ( \ud \mu_N - \ud \bar{\rho})^{\otimes 2} \ud \rho_N = \int_{\R^{2N}} \bigg( \frac{1}{N^2} \sum_{i,j=1}^N \phi(x_i, x_j) \bigg) \ud \rho_N,
\end{aligned}
\end{equation}
where the symmetric test function $\phi(x,y)$ is given by
\begin{equation}\label{LDP:vector field}
\begin{split}
\phi(x,y) =\varphi(x,y)-\int \varphi(z,y) \bar\rho(z) \ud z-\int \varphi(x,w) \bar\rho(w) \ud w+\int\int \varphi(z,w) \bar\rho(z) \bar\rho(w) \ud z \ud w.
\end{split}
\end{equation}
Here $\varphi(x,y)=(w(x)-w(y)) \cdot K(x-y)$. This expression can be viewed as some kind of transformation of the test function $\varphi(x,y)$ into $\phi(x,y)$, making it satisfy the two-side cancellation conditions \eqref{Eq:Cancellation inequality}. We denote that the singular part and the regular part respectively by
\begin{equation*}\label{w_1, w_2}
w_1(x) = \int w(y) \cdot K(x-y) \bar{\rho}(y) \ud y, \ \ \ w_2(x) = w(x) \cdot K \ast \bar{\rho}(x),
\end{equation*}
and the constant term
\begin{equation}\label{constant c}
c = \int w(z) \cdot K \ast \bar{\rho}(z) \ud \bar{\rho}(z).
\end{equation}

\begin{remark}
For condition $\nabla \cdot K = 0$ and $w(z) = \nabla \log(z)$, we have $c = 0$ and $w_1(z) = 0$, which is actually the case of Biot-Savart law in \cite{feng2023quantitative}.
\end{remark}
We also denote the sum of test functions on the left-handside of \eqref{IEq: functional inequality} as 
\begin{equation*}
\Phi_N(x_1,...,x_N) = \frac{1}{N^2} \sum_{i,j=1}^N \phi(x_i,x_j), 
\end{equation*}
then we use the Gibbs inequality with some positive function $\eta(t)$, which has been proved in \cite[Lemma 1]{jabin2018quantitative}, to obtain 
\begin{equation}\label{LDP:wait-eta}
\begin{aligned}
E_N(\rho_N|\bar{\rho}_N) \leq &\, E_N(\rho_N|\bar{\rho}_N)(0)
- \frac{1}{2} \int_0^t \int_{\R^{2N}} \bigg( \frac{1}{N^2} \sum_{i,j=1}^N \phi(x_i, x_j) \bigg) \ud \rho_N \ud s\\ \leq &\, E_N(\rho_N|\bar{\rho}_N)(0) + \int_0^t \frac{1}{\eta(s)} \mathcal{H}_N(\rho_N|\bar{\rho}_N)(s) \ud s + \frac{1}{N} \int_0^t \frac{1}{\eta(s)} \log \int_{\R^{2N}} \bar{\rho}_N \exp(N \eta \Phi) \ud X^N \ud s.
\end{aligned}
\end{equation}
Now it suffices to bound the exponential integral
\begin{equation*}
\int_{\R^{2N}} \bar{\rho}_N \exp(N \eta \Phi) \ud X^N,
\end{equation*}
which is integrated with respect to tensor product of i.i.d. probability density functions. Now we can apply our Large Deviation Theorem \ref{Thm:LDP}. It suffices to verify that the symmetric test function $\phi(x,y)$ defined in \eqref{LDP:vector field} satisfies the two-side cancellation conditions \eqref{LDP:condition1} and the upper bound \eqref{LDP:condition2} if we choose suitable weight $\eta(t) > 0$.

As explained in the previous subsection, when we consider the singular interacting kernel $K$ which has singularity equivalent to $1/|x|$ near the origin, for instance the Coulomb case of $d = 2$ in this paper, the test function $\phi(x,y)$ will be bounded in $y$ and have at most quadratic growth in $x$ on the whole space, so that we can control the functional \eqref{IEq: functional inequality} by the relative entropy $\mathcal{H}_N(\rho_N|\bar{\rho}_N)$ itself and a small error term of $O(1/N)$. This idea has been used for the Biot-Savart law on $\R^2$ in \cite{feng2023quantitative} and the Patlak-Keller-Segel model on $\mathbb{T}^2$ in \cite{bresch2023mean}. Now we detail this critical observation for our 2D Coulomb case.

\begin{lemma}
The constant $c$ is finite and the test function $\phi(x,y)$ satisfies the two-side cancellation conditions, i.e.
\begin{equation*}\label{Eq:Cancellation inequality}
\int \phi(x,y) \ud \bar{\rho}(x) = 0, \ \ \ \int \phi(x,y) \ud \bar{\rho}(y) = 0.
\end{equation*}
Moreover, for any fixed $x$, the function $\phi(x,y)$ is $L^{\infty}$ in $y$ and can be estimated as
\begin{equation}\label{phi-upper-bound}
\sup_{y \in \R^2} |\phi(x,y)| \leq \frac{C}{1+t}\Big(1 + \log(1+t)+\frac{|x|^2}{1+t}\Big).
\end{equation}
\end{lemma}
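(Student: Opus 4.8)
The plan is to reduce $\phi$ to its symmetrized form, where the two‐sided cancellation becomes transparent and the finiteness of $c$ is easy, and then to prove \eqref{phi-upper-bound} by splitting according to the natural length scale $\sqrt{1+t}$ of $\bar\rho$. Recall $\varphi(x,y)=(w(x)-w(y))\cdot K(x-y)$ and that $\phi$ is built from $\varphi$ via \eqref{LDP:vector field}. Since $K$ is odd, changes of variables give $\int\varphi(z,y)\bar\rho(z)\ud z=w_2(y)-w_1(y)$ and $\int\varphi(x,w)\bar\rho(w)\ud w=w_2(x)-w_1(x)$, hence $\iint\varphi\,\bar\rho^{\otimes2}=2c$, so that
\begin{equation*}
\phi(x,y)=\varphi(x,y)-\big(w_2(x)-w_1(x)\big)-\big(w_2(y)-w_1(y)\big)+2c .
\end{equation*}
The identities $\int w_2\bar\rho=c$, $\int w_1\bar\rho=-c$ (again by oddness of $K$) then make $\int\phi(x,y)\bar\rho(x)\ud x=\int\phi(x,y)\bar\rho(y)\ud y=0$ immediate. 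For finiteness of $c$ I would rewrite $\bar\rho\,w=\nabla\bar\rho-\beta\bar\rho\,K\ast\bar\rho$ and integrate by parts using $\text{div}\,K=\delta_0$, i.e. $\text{div}(K\ast\bar\rho)=\bar\rho$, obtaining the closed form $c=-\|\bar\rho\|_{L^2}^2-\beta\int|K\ast\bar\rho|^2\bar\rho\,\ud x$, which is plainly finite and, by Proposition~\ref{decaye1} and Lemma~\ref{alemma}, satisfies $|c|\lesssim(1+t)^{-1}$.

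\emph{The main step: bounding $\varphi$.} Write $B(t,x):=1+\log(1+t)+\tfrac{|x|^2}{1+t}$. From Theorem~\ref{Thm:Logarithm gradient estimate}, Theorem~\ref{Thm:Logarithm Hessian estimate} and Lemma~\ref{alemma} one has the working bounds $|w(x)|\lesssim(1+t)^{-1/2}B(t,x)^{1/2}$ and, since $\nabla w=\nabla^2\log\bar\rho-\beta\nabla(K\ast\bar\rho)$ (the last term controlled by a pointwise Calderón--Zygmund estimate together with Lemma~\ref{alemma}, cf. Lemma~\ref{lemma2}), $|\nabla w(x)|\lesssim(1+t)^{-1}B(t,x)$. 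I would then distinguish two regimes. If $|x-y|\le\sqrt{1+t}$, the mean value theorem gives $|\varphi(x,y)|\le|K(x-y)|\,|x-y|\,\sup_{[x,y]}|\nabla w|\le\tfrac1{2\pi}\sup_{[x,y]}|\nabla w|$; on that segment $|z|\le|x|+\sqrt{1+t}$, so $B(t,z)\lesssim B(t,x)$ and $|\varphi(x,y)|\lesssim(1+t)^{-1}B(t,x)$. If $|x-y|>\sqrt{1+t}$, then $|K(x-y)|<(2\pi\sqrt{1+t})^{-1}$; the $w(x)$ contribution is then $\lesssim(1+t)^{-1}B(t,x)^{1/2}\le(1+t)^{-1}B(t,x)$, and for the $w(y)$ contribution I would use $|y|\le|x|+|x-y|$ to write $B(t,y)^{1/2}\lesssim B(t,x)^{1/2}+|x-y|(1+t)^{-1/2}$, whence $|w(y)\cdot K(x-y)|\lesssim\tfrac{B(t,x)^{1/2}}{\sqrt{1+t}\,|x-y|}+\tfrac1{1+t}\lesssim(1+t)^{-1}B(t,x)$, using $|x-y|>\sqrt{1+t}$ and $|x|(1+t)^{-1/2}\le B(t,x)$. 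Hence $\sup_y|\varphi(x,y)|\lesssim(1+t)^{-1}B(t,x)$.

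\emph{Lower‐order terms and conclusion.} For the regular part, $|w_2(x)|\le|w(x)|\,\|K\ast\bar\rho\|_{L^\infty}\lesssim(1+t)^{-1}B(t,x)^{1/2}\le(1+t)^{-1}B(t,x)$; to control $w_2(y)$ uniformly in $y$ I would use the sharper tail bound $|K\ast\bar\rho(y)|\lesssim(\sqrt{1+t}+|y|)^{-1}$ (split the convolution and invoke the Gaussian upper bound, Lemma~\ref{Pro: Gaussian upper bound}), which absorbs the linear growth of $|w(y)|$ and gives $\sup_y|w_2(y)|\lesssim(1+t)^{-1}\log(1+t)\le(1+t)^{-1}B(t,x)$. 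For the singular part, the identity $\bar\rho\,w=\nabla\bar\rho-\beta\bar\rho\,K\ast\bar\rho$ together with $\int K(x-z)\cdot\nabla\bar\rho(z)\ud z=\bar\rho(x)$ (integration by parts, $\text{div}\,K=\delta_0$) gives $w_1=\bar\rho-\beta\,K\ast(\bar\rho\,K\ast\bar\rho)$; bounding the last convolution by the $R$‐optimisation already used in Lemma~\ref{alemma}, namely $\|K\ast h\|_{L^\infty}\lesssim\|h\|_{L^\infty}^{1/2}\|h\|_{L^1}^{1/2}$ with $h=\bar\rho\,K\ast\bar\rho$, yields $\|w_1\|_{L^\infty}\lesssim(1+t)^{-1}$, uniformly in the base point. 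Adding all contributions gives $\sup_y|\phi(x,y)|\lesssim(1+t)^{-1}B(t,x)$, which is \eqref{phi-upper-bound}.

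\emph{The main obstacle} is the estimate on $\varphi$, which is the genuinely new point compared with the torus case: on $\R^2$ the field $w=\nabla\log\bar\rho+\beta\nabla g\ast\bar\rho$ grows (at best linearly) in $x$, and only the suboptimal growth estimates of Section~\ref{Hamilton type gradient estimates} are available, so any translation‐invariant bound on $\varphi$ diverges. The resolution is the scale‐dependent split at $|x-y|\sim\sqrt{1+t}$, matched to the spreading rate of $\bar\rho$, which lets the $|x-y|^{-1}$ decay of the Coulomb force compensate the growth of $w$ at exactly the right rate; the residual $\log(1+t)$ loss inside $B(t,x)$ is also what forces the $e^{Ct^{\varepsilon}/\varepsilon}$, rather than polynomial, time growth in Theorem~\ref{Thm:EPoC}.
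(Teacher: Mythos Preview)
Your proof is correct and follows essentially the same approach as the paper: the same scale-dependent split of $\varphi(x,y)$ at $|x-y|\sim\sqrt{1+t}$ combined with the logarithmic gradient and Hessian estimates, and the same term-by-term treatment of $w_1$, $w_2$, and $c$. The minor tactical differences---your explicit closed form $c=-\|\bar\rho\|_{L^2}^2-\beta\int|K\ast\bar\rho|^2\bar\rho$, your tail bound $|K\ast\bar\rho(y)|\lesssim(\sqrt{1+t}+|y|)^{-1}$ for the uniform control of $w_2(y)$, and your integration-by-parts identity $w_1=\bar\rho-\beta\,K\ast(\bar\rho\,K\ast\bar\rho)$---are slightly cleaner than the paper's corresponding manipulations but lead to the same bounds.
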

\begin{proof}
The two-side cancellation conditions are easily verified by  direct computations, and we shall omit the proof here. Now we prove the upper bound \eqref{phi-upper-bound} by estimating each term in the test function $\phi(x,y)$ individually. We mention that the constant $C$ will change line by line for convenience in the following. For the singular term $w_1(z)$ in the test function \eqref{LDP:vector field}, we have by integration by parts and the long time asymptotic decay Lemma \ref{lemma2} that
\begin{equation*}
\begin{aligned}
|w_1(x)| & = \bigg| \int_{\R^2} \Big(\nabla \log \bar{\rho}(z) + \beta K \ast \bar{\rho}(z)\Big) \cdot K(x-z) \bar\rho(z) \ud z \bigg| \\ & \leq \bigg| \int_{\R^2} \Big(\nabla \log \bar{\rho}(z)\Big) \cdot K(x-z) \bar{\rho}(z) \ud z\bigg| + \bigg| \int_{\R^2} \beta K \ast \bar{\rho}(z) \cdot K(x - z) \bar{\rho}(z) \ud z \bigg| \\ & \leq |\nabla K \ast \bar\rho(x)|+ \frac{C}{\sqrt{1+t}}\int_{\R^2} |K(x - z)| \bar{\rho}(z) \ud z\\ & \leq \frac{C}{1+t}+ \frac{C}{\sqrt{1+t}}\Big(\frac{1}{1+t}\int_{|x-z| \leq \sqrt{1+t}} \frac{1}{|x-z|} \ud z+\int_{|x-z| \geq \sqrt{1+t}} \frac{\bar\rho(z)}{\sqrt{1+t}} \ud z \Big) \\ & \leq \frac{C}{1+t}.
\end{aligned}
\end{equation*}
Since the above bound is independent of $x$, we also have $|w_1(y)| \leq \frac{C}{1+t}$. For the regular term $w_2(z)$ in the test function \eqref{LDP:vector field}, we have by the logarithmic gradient estimate Theorem \ref{Thm:Logarithm gradient estimate} and the long time asymptotic decay Lemma \ref{lemma2} and the Gaussian upper bound Lemma \ref{Pro: Gaussian upper bound} that
\begin{equation}\label{w_2-upper-bound}
\begin{aligned}
|w_2(x)| & = \Big|\Big(\nabla \log \bar{\rho}(x) + \beta K \ast \bar{\rho}(x)\Big) \cdot K \ast \bar{\rho}(x)\Big| \\ & \leq |\nabla \log \bar{\rho}(x) \cdot K \ast \bar{\rho}(x)| + \beta |K \ast \bar{\rho}(x)|^2 \\ & \leq \frac{C}{1+t}\Big(1+\sqrt{\log(1+t)}\Big) +\frac{C}{1+t} \int_{\R^2} \frac{|x|}{|x-z|}\bar\rho(z) \ud z \\
& \leq \frac{C}{1+t}\Big(1+\sqrt{\log(1+t)}\Big) +\frac{C}{1+t} \Big(\sup_{z} |z|\bar\rho(z) \int_{|x-z| \leq \sqrt{1+t}} \frac{1}{|x-z|} \ud z+\int_{|x-z| \geq \sqrt{1+t}} |z|\bar\rho(z) \ud z \Big)\\
& \leq \frac{C}{1+t}\Big(1+\sqrt{\log(1+t)}\Big).
\end{aligned}
\end{equation}
Since the above bound is independent of $x$, we also have $|w_2(y) \leq \frac{C}{1+t}\Big(1+\sqrt{\log(1+t)}\Big)$. For the constant term $c = \int_{\R^2} w(z) \cdot K \ast \bar{\rho}(z) \bar{\rho}(z) \ud z$ in the test function \eqref{LDP:vector field}, we directly use the previous estimate for $w_2(z)$ \eqref{w_2-upper-bound} to obtain that
\begin{equation*}
 c=\int_{\R^2} w(z) \cdot K \ast \bar{\rho}(z) \bar{\rho}(z) \ud z=\int_{\R^2} w_2(z) \bar\rho(z) \ud z \leq \frac{C}{1+t}\Big(1+\sqrt{\log(1+t)}\Big),
\end{equation*}
which also shows that the constant $c$ is finite. For the commutator term $\varphi(x,y)=\big(w(x) - w(y)\big) \cdot K(x - y)$ in the test function \eqref{LDP:vector field}, due to the singularity of $K(x-y)$ on the line $x=y$, we shall deal with the singular case $|x - y| \leq \sqrt{1+t}$ and the regular case $|x - y| \geq \sqrt{1+t}$ respectively. For the singular case $|x - y| \leq \sqrt{1+t}$, we use the mean value inequality to cancel the singularity, together with the long time asymptotic decay Lemma \ref{lemma2} and the logarithmic Hessian estimate Theorem \ref{Thm:Logarithm Hessian estimate}, to obtain that 
\begin{align*}
\Big|\Big(w(x) - w(y)\Big) \cdot K(x-y)\Big|
\leq&\, C\sup_{|z - x| \leq \sqrt{1+t}} \Big(|\nabla^2 \log \bar{\rho}(z)|+\beta |\nabla^2 g \ast\bar\rho(z)|\Big)\\
\leq&\, C\sup_{|z - x| \leq \sqrt{1+t}} \Big(\frac{1+\log(1+t)}{1+t}+\frac{|z|^2}{(1+t)^2}\Big)\\
\leq&\, \frac{C}{1+t}\Big(1 + \log(1+t) + \frac{|x|^2}{1+t}\Big), 
\end{align*}
while for the regular case $|x-y| \geq \sqrt{1+t}$ we use the logarithmic gradient estimate Theorem \ref{Thm:Logarithm gradient estimate} and the long time asymptotic decay Lemma \ref{lemma2} and the Cauchy-Schwartz inequality to obtain that
\begin{align*}
|(w(x) - w(y)) \cdot K(x-y)| &\leq C \frac{|w(x)| + |w(y)|}{|y - x|}\\
&\leq \frac{C}{|y-x|}\Big(\frac{1 + \sqrt{\log(1+t)}}{\sqrt{1+t}} + \frac{|x|}{1+t} + \frac{|y|}{1+t}\Big)\\
&\leq \frac{C}{\sqrt{1+t}}\Big(\frac{1 + \sqrt{\log(1+t)}}{\sqrt{1+t}} + \frac{|x|}{1+t}\Big)+\frac{C}{1+t}\frac{|y-x|+|x|}{|y-x|}\\
&\leq \frac{C}{1+t}\Big(1 + \sqrt{\log(1+t)} +\frac{|x|}{\sqrt{1+t}}\Big)\\
&\leq \frac{C}{1+t}\Big(1 + \log(1+t) + \frac{|x|^2}{1+t}\Big).
\end{align*}
Collecting all estimates above, we finish the proof.
\end{proof}

Now we come back to the time derivative of modulated free energy estimate \eqref{LDP:wait-eta}. We denote that
\begin{equation*}
\begin{aligned}
\psi(t,x) = \frac{C}{1+t}\Big(1 + \log(1+t) + \frac{|x|^2}{1+t}\Big)
\end{aligned}
\end{equation*}
for our upper bound for the test function.
Recall from \cite[Section 1.4]{jabin2016mean} that the condition
\begin{equation*}
\sup_{p \geq 1} \frac{\|\varphi\|_{L^p(\bar{\rho} \ud x)}}{p} < \infty
\end{equation*}
is equivalent to the exponential integrability that there exists some $\lambda > 0$ such that
\begin{equation*}
\int_{\R^2} e^{\lambda \varphi}  \bar{\rho}(x) \ud x< \infty.
\end{equation*}
Moreover, as pointed out in \cite{feng2023quantitative}, we have the quantitative relation
\begin{equation*}
\sup_{p \geq 1} \frac{\|\varphi\|_{L^p(\bar{\rho} \ud x)}}{p} \leq \frac{1}{\lambda} \int_{\R^2} e^{\lambda \varphi} \bar{\rho} \ud x.
\end{equation*}
Recall the Gaussian upper bound Lemma \ref{Pro: Gaussian upper bound}
\begin{equation*}
\bar{\rho}(x,t) \le \frac{C}{1+t}\exp\Big(-\frac{|x|^2}{8t+C}\Big),
\end{equation*}
which inspires us to choose the exponential weight $\lambda(t) = \varepsilon (1+t)$ with $\varepsilon > 0$ small enough, we have
\begin{equation*}
\begin{aligned}
\sup_{p \geq 1} \frac{\|\varphi\|_{L^p(\bar{\rho} \ud x)}}{p} \leq \frac{1}{\lambda} \int_{\R^2} e^{\lambda \varphi} \ud \bar{\rho}(x) & \leq \frac{1}{\varepsilon (1+t)^2} \int_{\R^2} \exp\Big(C\varepsilon\big(1 + \log(1+t) + \frac{|x|^2}{1+t}\big) -\frac{|x|^2}{8t+C}\Big) \ud x \\ & \leq \frac{e^{C\varepsilon}(1+t)^{C\varepsilon}}{\varepsilon (1+t)^2} \int_{\R^2} \exp\Big(\frac{C\varepsilon|x|^2}{1+t}-\frac{|x|^2}{8t+C}\Big)  \ud x \\ & \leq \frac{C}{\varepsilon(1+t)^{1-\varepsilon}}.
\end{aligned}
\end{equation*}
Now we choose suitable $\eta(t)$ to satisfy the condition \eqref{LDP:condition2}. It suffices to take 
\begin{equation*}
\eta(t) = C \varepsilon (1+t)^{1 - \varepsilon}.
\end{equation*}
Plugging it into the time evolution inequality of modulated free energy, we have
\begin{equation}
\begin{split}
E_N(\rho_N|\bar{\rho}_N) \leq &\, E_N(\rho_N|\bar{\rho}_N)(0) + \int_0^t \frac{1}{\eta(s)} \mathcal{H}_N(\rho_N|\bar{\rho}_N)(s) \ud s \\ & + \frac{1}{N} \int_0^t \frac{1}{\eta(s)} \log \int_{\R^{2N}} \exp{(N\eta \Phi_N)}d\bar{\rho}_N \ud s \\ \leq &\, E_N(\rho_N|\bar{\rho}_N)(0) + \int_0^t \frac{C}{\varepsilon (1+s)^{1-\varepsilon}} E_N(\rho_N|\bar{\rho}_N)(s) \ud s + \frac{C (1+t)^{\varepsilon}}{\varepsilon N}.
\end{split}
\end{equation}
for any small $\varepsilon > 0$ and some constant $C = C(\beta, \bar{\rho}_0) > 0$. Applying the generalized Gr\"onwall's inequality in \cite[Theorem A]{YE20071075} with
\begin{equation*}
    x(t)=E_N(\rho_N|\bar\rho_N)(t), \quad h(t)=E_N(\rho_N|\bar\rho_N)(0)+\frac{C(1+t)^\varepsilon}{\varepsilon N}, \quad k(t)=\frac{C}{\varepsilon(1+t)^{1-\varepsilon}},
\end{equation*}
we conclude our desired estimate.

\section{Appendix}\label{Appendix}

\subsection{Proof of Lemma \ref{Lemma:LGE}}
Directly by \cite[Section 6]{feng2023quantitative}, we have
\begin{equation*}
\begin{aligned}
\Delta \Big( \frac{|\nabla \bar{\rho}|^2}{\bar{\rho}}\Big) = &  2\frac{|\nabla^2 \bar{\rho}|^2}{\bar{\rho}} + 2\frac{|\nabla \bar{\rho}|^4}{\bar{\rho}^3} - 4\frac{\nabla \bar{\rho} \cdot \nabla^2 \bar{\rho} \cdot \nabla \bar{\rho}}{\bar{\rho}^2} + \frac{2 \nabla \bar{\rho}} {\bar{\rho}} \cdot \nabla \Delta \bar{\rho} - \frac{|\nabla \bar{\rho}|^2}{\bar{\rho}^2} \Delta \bar{\rho} \\ = & \frac{2}{\bar{\rho}}\bigg|\nabla^2 \bar{\rho} - \frac{\nabla \bar{\rho} \otimes \nabla \bar{\rho}}{\bar{\rho}}\bigg|^2 + \frac{2 \nabla \bar{\rho}} {\bar{\rho}} \cdot \nabla \Delta \bar{\rho} - \frac{|\nabla \bar{\rho}|^2}{\bar{\rho}^2} \Delta \bar{\rho},
\end{aligned}
\end{equation*}

\begin{equation*}
\begin{aligned}
\partial_t \Big( \frac{|\nabla \bar{\rho}|^2}{\bar{\rho}}\Big) = & \frac{2 \nabla \bar{\rho}} {\bar{\rho}} \cdot \nabla (\Delta \bar{\rho} - \nabla g \ast \bar{\rho} \cdot \nabla \bar{\rho} - q \bar{\rho}) - \frac{|\nabla \bar{\rho}|^2}{\bar{\rho}^2} (\Delta \bar{\rho} - \nabla g \ast \bar{\rho} \cdot \nabla \bar{\rho} - q \bar{\rho}) \\ = & \frac{2 \nabla \bar{\rho}} {\bar{\rho}} \cdot \nabla \Delta \bar{\rho} - \frac{|\nabla \bar{\rho}|^2}{\bar{\rho}^2} \Delta \bar{\rho} - \nabla g \ast \bar{\rho} \cdot \nabla \Big( \frac{|\nabla \bar{\rho}|^2}{\bar{\rho}}\Big) - q\Big( \frac{|\nabla \bar{\rho}|^2}{\bar{\rho}}\Big) \\ & - \frac{2 \nabla \bar{\rho}}{\bar{\rho}} \cdot \nabla^2 g \ast \bar{\rho} \cdot \nabla \bar{\rho} - 2 \nabla \bar{\rho} \cdot \nabla q. 
\end{aligned}
\end{equation*}
Using the Cauchy-Schwartz inequality and the boundedness of $\nabla q$, we have 
\[ \nabla \bar{\rho} \cdot \nabla q \leq C'_1 \frac{|\nabla \bar{\rho}|^2}{\bar{\rho}} + C'_2 \bar{\rho}, \]
for some $C'_1, C'_2 > 0$. Then we have
\begin{equation}
\begin{aligned}
(\partial_t + q - \Delta_f) \Big( \frac{|\nabla \bar{\rho}|^2}{\bar{\rho}}\Big) = & -\frac{2}{\bar{\rho}}\bigg|\nabla^2 \bar{\rho} - \frac{\nabla \bar{\rho} \otimes \nabla \bar{\rho}}{\bar{\rho}}\bigg|^2 - \frac{2 \nabla \bar{\rho}}{\bar{\rho}} \cdot \nabla^2 g \ast \bar{\rho} \cdot \nabla \bar{\rho} - 2 \nabla \bar{\rho} \cdot \nabla q \\ \leq &\, C_1 \frac{|\nabla \bar{\rho}|^2}{\bar{\rho}} + C_2 \bar{\rho}.
\end{aligned}
\end{equation}
where $C_1 = C_1(\|\nabla^2 g \ast \bar{\rho}\|_{L^{\infty}}, \|\nabla q\|_{L^{\infty}}), C_2 = C_2(\| \nabla q\|_{L^{\infty}})$. Moreover, if we take $q = \bar{\rho}$, we have
\begin{equation*}
\begin{aligned}
\partial_t \Big( \frac{|\nabla \bar{\rho}|^2}{\bar{\rho}}\Big) = &\, \frac{2 \nabla \bar{\rho}} {\bar{\rho}} \cdot \nabla (\Delta \bar{\rho} - \nabla g \ast \bar{\rho} \cdot \nabla \bar{\rho} - q \bar{\rho}) - \frac{|\nabla \bar{\rho}|^2}{\bar{\rho}^2} (\Delta \bar{\rho} - \nabla g \ast \bar{\rho} \cdot \nabla \bar{\rho} - q \bar{\rho}) \\ = &\, \frac{2 \nabla \bar{\rho}} {\bar{\rho}} \cdot \nabla \Delta \bar{\rho} - \frac{|\nabla \bar{\rho}|^2}{\bar{\rho}^2} \Delta \bar{\rho} - \nabla g \ast \bar{\rho} \cdot \nabla \Big( \frac{|\nabla \bar{\rho}|^2}{\bar{\rho}}\Big) - q\Big( \frac{|\nabla \bar{\rho}|^2}{\bar{\rho}}\Big) \\ & - \frac{2 \nabla \bar{\rho}}{\bar{\rho}} \cdot \nabla^2 g \ast \bar{\rho} \cdot \nabla \bar{\rho} - 2 \nabla \bar{\rho} \cdot \nabla q \\ \leq &\, \frac{2 \nabla \bar{\rho}} {\bar{\rho}} \cdot \nabla \Delta \bar{\rho} - \frac{|\nabla \bar{\rho}|^2}{\bar{\rho}^2} \Delta \bar{\rho} - \nabla g \ast \bar{\rho} \cdot \nabla \Big( \frac{|\nabla \bar{\rho}|^2}{\bar{\rho}}\Big) - q \Big( \frac{|\nabla \bar{\rho}|^2}{\bar{\rho}} \Big) - \frac{2 \nabla \bar{\rho}}{\bar{\rho}} \cdot \nabla^2 g \ast \bar{\rho} \cdot \nabla \bar{\rho}.
\end{aligned}
\end{equation*}
Then we conclude that
\begin{equation}
\begin{aligned}
(\partial_t + q - \Delta_f) \Big( \frac{|\nabla \bar{\rho}|^2}{\bar{\rho}}\Big) \leq & -\frac{2}{\bar{\rho}}\bigg|\nabla^2 \bar{\rho} - \frac{\nabla \bar{\rho} \otimes \nabla \bar{\rho}}{\bar{\rho}}\bigg|^2 - \frac{2 \nabla \bar{\rho}}{\bar{\rho}} \cdot \nabla^2 g \ast \bar{\rho} \cdot \nabla \bar{\rho} \\ \leq &\, C \frac{|\nabla \bar{\rho}|^2}{\bar{\rho}},
\end{aligned}
\end{equation}
where $C = C(\| \nabla^2 g \ast \bar{\rho}\|_{L^{\infty}})$.
Similarly, we have
\begin{equation*}
\Delta (\bar{\rho} \log \bar{\rho}) = (1 + \log \bar{\rho}) \Delta \bar{\rho} + \frac{|\nabla \bar{\rho}|^2}{\bar{\rho}}, 
\end{equation*}
\begin{equation*}
\begin{aligned}
\partial_t (\bar{\rho} \log \bar{\rho}) = & (1 + \log \bar{\rho})(\Delta \bar{\rho} - \nabla g \ast \bar{\rho} \cdot \nabla \bar{\rho} - q \bar{\rho}) \\ = & (1 + \log \bar{\rho})\Delta \bar{\rho} - \nabla g \ast \bar{\rho} \cdot \nabla(\bar{\rho} \log \bar{\rho}) - q \bar{\rho} \log \bar{\rho} - q\bar{\rho}, 
\end{aligned}
\end{equation*}
then we obtain
\begin{equation}
\begin{aligned}
(\partial_t + q - \Delta_f)(\bar{\rho} \log \bar{\rho}) & = -\frac{|\nabla \bar{\rho}|^2}{\bar{\rho}} - q \bar{\rho} \leq -\frac{|\nabla \bar{\rho}|^2}{\bar{\rho}},
\end{aligned}
\end{equation}
by the non-negativity of $q$. Finally, by the Cauchy-Schwartz inequality and the boundedness of $q_t$, we have
\begin{equation}
\begin{aligned}
(\partial_t + q - \Delta_f)\rho_t = - \nabla g \ast \rho_t \cdot \nabla \rho - q_t \bar{\rho} \leq C_3 \frac{|\nabla \bar{\rho}|^2}{\bar{\rho}} + C_3' \bar{\rho}. 
\end{aligned}
\end{equation}

\subsection{Proof of Lemma \ref{Lemma:LHE}}
For the first line of Lemma \ref{Lemma:LHE}, we rewrite \eqref{Lemma:LGE-1} in Lemma \ref{Lemma:LGE} as following,
\begin{equation*}
\begin{aligned}
(\partial_t + q - \Delta_f )(\frac{|\nabla \bar{\rho}|^2}{\bar{\rho}}) & = - \frac{2}{\bar{\rho}} |\nabla_i \nabla_j {\bar{\rho}} - \frac{\nabla_i \rho \nabla_j {\bar{\rho}}}{{\bar{\rho}}}|^2 - \frac{2}{{\bar{\rho}}} \nabla {\bar{\rho}} \cdot \nabla (K \ast {\bar{\rho}}) \cdot \nabla {\bar{\rho}} - 2|\nabla {\bar{\rho}}|^2, \\ & \leq - \frac{2}{{\bar{\rho}}} |\nabla_i \nabla_j {\bar{\rho}} - \frac{\nabla_i {\bar{\rho}} \nabla_j {\bar{\rho}}}{{\bar{\rho}}}|^2 - \frac{2}{{\bar{\rho}}} \nabla {\bar{\rho}} \cdot \nabla (K \ast {\bar{\rho}}) \cdot \nabla {\bar{\rho}} \\ & = - \frac{2}{{\bar{\rho}}}|\nabla^2 {\bar{\rho}}|^2 - \frac{2 |\nabla {\bar{\rho}}|^4}{{\bar{\rho}}^3} + \frac{4}{{\bar{\rho}}^2} \nabla^2 {\bar{\rho}}: \nabla {\bar{\rho}} \otimes \nabla {\bar{\rho}} - \frac{2}{{\bar{\rho}}} \nabla {\bar{\rho}} \cdot \nabla (K \ast {\bar{\rho}}) \cdot \nabla {\bar{\rho}} \\ & \leq - \frac{|\nabla^2 {\bar{\rho}}|^2}{{\bar{\rho}}} + \frac{2 |\nabla {\bar{\rho}}|^4}{{\bar{\rho}}^3} + \frac{C}{t \vee 1}\frac{|\nabla {\bar{\rho}}|^2}{{\bar{\rho}}}.
\end{aligned}
\end{equation*}
For the second line of Lemma \ref{Lemma:LHE}, directly by \cite[Section 7]{feng2023quantitative}, we have
\begin{equation*}
\begin{aligned}
\Delta \Big( \frac{|\nabla^2 \bar{\rho}|^2}{\bar{\rho}}\Big) = &\,  2\frac{|\nabla^3 \bar{\rho}|^2}{\bar{\rho}} + 2\frac{|\nabla^2 \bar{\rho}|^2 |\nabla \bar{\rho}|^2}{\bar{\rho}^3} - 4\frac{\nabla \bar{\rho} \cdot \nabla^2 \bar{\rho} \cdot \nabla \bar{\rho}}{\bar{\rho}^2} + \frac{2 \nabla^2 \bar{\rho}} {\bar{\rho}} \cdot \nabla^2 \Delta \bar{\rho} - \frac{|\nabla^2 \bar{\rho}|^2}{\bar{\rho}^2} \Delta \bar{\rho} \\ = &\, \frac{2}{\bar{\rho}}\bigg|\nabla^3 \bar{\rho} - \frac{\nabla \bar{\rho} \otimes \nabla^2 \bar{\rho}}{\bar{\rho}}\bigg|^2 + \frac{2 \nabla^2 \bar{\rho}} {\bar{\rho}} \cdot \nabla^2 \Delta \bar{\rho} - \frac{|\nabla^2 \bar{\rho}|^2}{\bar{\rho}^2} \Delta \bar{\rho},
\end{aligned}
\end{equation*}
and 
\begin{equation*}
\begin{aligned}
\partial_t \Big( \frac{|\nabla^2 \bar{\rho}|^2}{\bar{\rho}}\Big) = & \,\frac{2 \nabla^2 \bar{\rho}} {\bar{\rho}} \cdot \nabla^2 (\Delta \bar{\rho} - \nabla g \ast \bar{\rho} \cdot \nabla \bar{\rho} - q \bar{\rho}) - \frac{|\nabla^2 \bar{\rho}|^2}{\bar{\rho}^2} (\Delta \bar{\rho} - \nabla g \ast \bar{\rho} \cdot \nabla \bar{\rho} - q \bar{\rho}) \\ = &\, \frac{2 \nabla^2 \bar{\rho}} {\bar{\rho}} \cdot \nabla^2 \Delta \bar{\rho} - \frac{|\nabla^2 \bar{\rho}|^2}{\bar{\rho}^2} \Delta \bar{\rho} - \nabla g \ast \bar{\rho} \cdot \nabla \Big( \frac{|\nabla^2 \bar{\rho}|^2}{\bar{\rho}}\Big) - q\Big( \frac{|\nabla^2 \bar{\rho}|^2}{\bar{\rho}}\Big) \\ & - \frac{2 \nabla^2 \bar{\rho}}{\bar{\rho}} \cdot \nabla^3 g \ast \bar{\rho} \cdot \nabla \bar{\rho} - \frac{4 \nabla^2 \bar{\rho}}{\bar{\rho}} \cdot \nabla^2 g \ast \bar{\rho} \cdot \nabla^2 \bar{\rho} - 2 \nabla^2 \bar{\rho} \cdot \nabla^2 q - \frac{4 \nabla^2 \bar{\rho}}{\bar{\rho}}:\nabla q \otimes \nabla \bar{\rho}.
\end{aligned}
\end{equation*}
Using the Cauchy-Schwartz inequality and the
boundedness of $\nabla^2 q$ and $\nabla q$, we have
\begin{equation*}
\begin{aligned}
- \frac{2 \nabla^2 \bar{\rho}}{\bar{\rho}} \cdot \nabla^3 g \ast \bar{\rho} \cdot \nabla \bar{\rho} & \leq \frac{C}{(t \vee 1)^2} \frac{|\nabla^2 \bar{\rho}|^2}{\bar{\rho}} + \frac{C'}{(t \vee 1)^2} \frac{|\nabla \bar{\rho}|^2}{\bar{\rho}}, \\ 
- 2 \nabla^2 \bar{\rho} \cdot \nabla^2 q & \leq \frac{C}{(t \vee 1)^2} \frac{|\nabla^2 \bar{\rho}|^2}{\bar{\rho}} + \frac{C'}{(t \vee 1)^2} \bar{\rho}, \\
- \frac{4 \nabla^2 \bar{\rho}}{\bar{\rho}}:\nabla q \otimes \nabla \bar{\rho} & \leq \frac{C}{t \vee 1} \frac{|\nabla^2 \bar{\rho}|^2}{\bar{\rho}} + \frac{C'}{t \vee 1} \frac{|\nabla \bar{\rho}|^2}{\bar{\rho}},
\end{aligned}
\end{equation*}
for some $C, C' > 0$, then 
\begin{equation*}
\begin{aligned}
(\partial_t + q - \Delta_f) \Big( \frac{|\nabla^2 \bar{\rho}|^2}{\bar{\rho}}\Big) = & -\frac{2}{\bar{\rho}}\bigg|\nabla^3 \bar{\rho} - \frac{\nabla \bar{\rho} \otimes \nabla^2 \bar{\rho}}{\bar{\rho}}\bigg|^2 - 2 \nabla^2 \bar{\rho} \cdot \nabla^2 q - \frac{4 \nabla^2 \bar{\rho}}{\bar{\rho}}:\nabla q \otimes \nabla \bar{\rho}\\ & - \frac{2 \nabla^2 \bar{\rho}}{\bar{\rho}} \cdot \nabla^3 g \ast \bar{\rho} \cdot \nabla \bar{\rho} - \frac{4 \nabla^2 \bar{\rho}}{\bar{\rho}} \cdot \nabla^2 g \ast \bar{\rho} \cdot \nabla^2 \bar{\rho} \\ \leq & \, \frac{C_1}{t \vee 1} \frac{|\nabla^2 \bar{\rho}|^2}{\bar{\rho}} + \frac{C_1'}{t \vee 1} \frac{|\nabla \bar{\rho}|^2}{\bar{\rho}} + \frac{C_1''}{(t \vee 1)^2} \bar{\rho}.
\end{aligned}
\end{equation*}
Moreover, if we take $q = \bar{\rho}$, we have
\begin{equation*}
- 2 \nabla^2 \bar{\rho} \cdot \nabla^2 q = - 2 \nabla^2 \bar{\rho} \cdot \nabla^2 \bar{\rho} = - 2 |\nabla^2 \bar{\rho}|^2 \leq 0,
\end{equation*}
and 
\begin{equation*}
- \frac{4 \nabla^2 \bar{\rho}}{\bar{\rho}}:\nabla q \otimes \nabla \bar{\rho} = - \frac{4 \nabla^2 \bar{\rho}}{\bar{\rho}}:\nabla \bar{\rho} \otimes \nabla \bar{\rho} \leq \frac{C'}{(t \vee 1)^2} \frac{|\nabla \bar{\rho}|^2}{\bar{\rho}},
\end{equation*}
hence we finally have
\begin{equation}
\begin{aligned}
(\partial_t + q - \Delta_f) \Big( \frac{|\nabla^2 \bar{\rho}|^2}{\bar{\rho}}\Big) = & -\frac{2}{\bar{\rho}}\bigg|\nabla^3 \bar{\rho} - \frac{\nabla \bar{\rho} \otimes \nabla^2 \bar{\rho}}{\bar{\rho}}\bigg|^2 - 2 \nabla^2 \bar{\rho} \cdot \nabla^2 q - \frac{4 \nabla^2 \bar{\rho}}{\bar{\rho}}:\nabla q \otimes \nabla \bar{\rho}\\ & - \frac{2 \nabla^2 \bar{\rho}}{\bar{\rho}} \cdot \nabla^3 g \ast \bar{\rho} \cdot \nabla \bar{\rho} - \frac{4 \nabla^2 \bar{\rho}}{\bar{\rho}} \cdot \nabla^2 g \ast \bar{\rho} \cdot \nabla^2 \bar{\rho} \\ \leq & \frac{C_1}{t \vee 1
} \frac{|\nabla^2 \bar{\rho}|^2}{\bar{\rho}} + \frac{C_1'}{(t \vee 1)^2} \frac{|\nabla \bar{\rho}|^2}{\bar{\rho}}.
\end{aligned}
\end{equation}

Next we focus on the propagation of $\bar{\rho}(\log \bar{\rho})^2$, again by \cite[Section 7]{feng2023quantitative}, 
\begin{equation*}
\begin{aligned}
\Delta(\bar{\rho}(\log \bar{\rho})^2) = \Delta \rho(\log \bar{\rho})^2 + 2\Delta \rho(\log \bar{\rho}) + 2 \frac{|\nabla \bar{\rho}|^2}{\bar{\rho}} + 2 \frac{|\nabla \bar{\rho}|^2}{\bar{\rho}} \log \bar{\rho}.
\end{aligned}
\end{equation*}
Hence we finally have
\begin{equation*}
\begin{aligned}
\partial_t (\bar{\rho}(\log \bar{\rho})^2) = \Delta \rho(\log \bar{\rho})^2 + 2\Delta \rho(\log \bar{\rho}) - \nabla g \ast \bar{\rho} \cdot \nabla (\bar{\rho}(\log \bar{\rho})^2) - q \bar{\rho}(\log \bar{\rho})^2 - 2q \bar{\rho} \log \bar{\rho}, 
\end{aligned}
\end{equation*}
\begin{equation*}
(\partial_t + q - \Delta_f)(\bar{\rho} (\log \bar{\rho})^2) = - \frac{2|\nabla \bar{\rho}|^2}{\bar{\rho}}(1 + \log \bar{\rho}) - 2q \bar{\rho} \log \bar{\rho}.
\end{equation*}

\subsection*{Acknowledgements}
This work was partially supported by  the National Key R\&D Program of China, Project Number 2021YFA1002800, NSFC grant No.12171009 and Young
Elite Scientist Sponsorship Program by China Association for Science and Technology (CAST) No. YESS20200028. Z.W. would like to thank Prof. Lei Li from SJTU for discussing about the small interaction force case in Remark \ref{rekConvex}, that leads to our final proof of Theorem \ref{CRCon}.

\bibliographystyle{alpha}
\bibliography{sample}

\end{document}